\documentclass[12pt]{amsart}

\usepackage{amsmath,amsfonts,amssymb}

\theoremstyle{plain}
\newtheorem{lemma}{Lemma}[section]
\newtheorem{theorem}[lemma]{Theorem}

\newtheorem{proposition}[lemma]{Proposition}

\theoremstyle{definition}
\newtheorem{definition}[lemma]{Definition}

\numberwithin{equation}{section}
\setlength{\textwidth}{6.5 in}
\setlength{\oddsidemargin}{0.0in}
\setlength{\evensidemargin}{0.0in}
\setlength{\textheight}{8in}

\begin{document}

\newcommand{\ZZ}{\mathbb{Z}}
\newcommand{\ZZd}{\mathbb{Z}^{d}}
\newcommand{\RR}{\mathbb{R}}
\newcommand{\RRd}{\mathbb{R}^{d}}
\newcommand{\PP}{\mathbb{P}}
\newcommand{\QQ}{\mathbb{Q}}
\newcommand{\EE}{\mathbb{E}}
\newcommand{\mB}{\mathcal{B}}
\newcommand{\mC}{\mathcal{C}}
\newcommand{\mD}{\mathcal{D}}
\newcommand{\mE}{\mathcal{E}}
\newcommand{\mF}{\mathcal{F}}
\newcommand{\mG}{\mathcal{G}}
\newcommand{\mH}{\mathcal{H}}
\newcommand{\mI}{\mathcal{I}}
\newcommand{\mJ}{\mathcal{J}}
\newcommand{\mL}{\mathcal{L}}
\newcommand{\mM}{\mathcal{M}}
\newcommand{\mkN}{\mathfrak{N}}
\newcommand{\mO}{\mathcal{O}}
\newcommand{\mQ}{\mathcal{Q}}
\newcommand{\mR}{\mathcal{R}}
\newcommand{\mS}{\mathcal{S}}
\newcommand{\mT}{\mathcal{T}}
\newcommand{\mU}{\mathcal{U}}
\newcommand{\mW}{\mathcal{W}}
\newcommand{\mY}{\mathcal{Y}}
\newcommand{\bs}{\backslash}
\newcommand{\half}{\frac{1}{2}}
\newcommand{\fN}{\frac{1}{N}}
\newcommand{\bx}{\mathbf{x}}
\newcommand{\bV}{\mathbf{V}}
\newcommand{\olV}{\overline{V}}
\newcommand{\olm}{\overline{m}}
\newcommand{\olf}{\overline{f}}
\newcommand{\olS}{\overline{S}}
\newcommand{\hZ}{\hat{Z}}
\newcommand{\hJ}{\hat{\mathcal{J}}}
\newcommand{\ha}{\hat{a}}
\newcommand{\hb}{\hat{b}}
\newcommand{\hl}{\hat{l}}
\newcommand{\hmJ}{\hat{\mathcal{J}}}
\newcommand{\hmW}{\hat{\mathcal{W}}}
\newcommand{\hmS}{\hat{\mathcal{S}}}
\newcommand{\hmJs}{\hat{\mathcal{J}}^*}
\newcommand{\hmL}{\hat{\mathcal{L}}}
\newcommand{\tq}{\tilde{q}}
\newcommand{\tF}{\tilde{F}}

\newcommand{\tmJ}{\tilde{\mathcal{J}}}
\newcommand{\ta}{\tilde{\alpha}}
\newcommand{\tnu}{\tilde{\nu}}
\newcommand{\tphi}{\tilde{\varphi}}
\newcommand{\tm}{\tilde{m}}
\newcommand{\tx}{\tilde{x}}
\newcommand{\tbx}{\tilde{\mathbf{x}}}
\newcommand{\tdelta}{\tilde{\delta}}
\newcommand{\tE}{\tilde{E}}
\newcommand{\hN}{\hat{N}_0}
\newcommand{\ep}{\epsilon}
\newcommand{\hS}{\hat{S}}
\newcommand{\hd}{\hat{\delta}}
\newcommand{\hp}{\hat{\varphi}}
\newcommand{\tS}{\tilde{S}}

\title[Quenched vs. Annealed Critical Points]{Quenched and Annealed Critical Points in Polymer Pinning Models}
\author{Kenneth S. Alexander}
\address{Department of Mathematics KAP 108\\
University of Southern California\\
Los Angeles, CA  90089-2532 USA}
\email{alexandr@usc.edu}
\thanks{The research of the first author was supported by NSF grant DMS-0405915.}
\author{Nikos Zygouras}
\email{zygouras@usc.edu}

\keywords{pinning, polymer, depinning transition, critical point, quenched disorder, random potential,
free energy}
\subjclass[2000]{Primary: 82D60; Secondary: 82B44, 60K35}

\begin{abstract}
We consider a polymer with configuration modeled by the path of a Markov chain, interacting with a potential $u+V_n$ which the chain encounters when it visits a special state 0 at time $n$.  The disorder $(V_n)$ is a fixed realization of an i.i.d. sequence.  The polymer is pinned, i.e. the chain spends a positive fraction of its time at state 0, when $u$ exceeds a critical value.  We assume that for the Markov chain in the absence of the potential, the probability of an excursion from 0 of length $n$ has the form $n^{-c}\varphi(n)$ with $c \geq 1$ and $\varphi$ slowly varying.  Comparing to the corresponding annealed system, in which the $V_n$ are effectively replaced by a constant, it was shown in \cite{Al08}, \cite{DGLT07}, \cite{To07} that the quenched and annealed critical points differ at all temperatures for $3/2<c<2$ and $c>2$, but only at low temperatures for $c<3/2$.  For high temperatures and $3/2<c<2$ we establish the exact order of the gap between critical points, as a function of temperature.  For the borderline case $c=3/2$ we show that the gap is positive provided $\varphi(n) \to 0$ as $n \to \infty$, and for $c >3/2$ with arbitrary temperature we provide an alternate proof of the result in \cite{DGLT07} that the gap is positive, and extend it to $c=2$.
\end{abstract}

\maketitle

\section{Introduction}
A polymer pinning model is described by a Markov chain $(X_n)_{n\geq 0}$ on a state state space $\Sigma$, containing a special point $0$ where the polymer interacts with a potential. The space-time trajectory of the Markov chain represents the physical configuration of the polymer, with the $n$th monomer of the polymer chain located at $(n,X_n)$, or alternatively, one can view $X_n$ as the location of the $n$th monomer, with $n$ being just an index; these are mathematically equivalent.  We denote the distribution of the Markov chain in the absence of the potential, started from 0, by $P^X$ and we assume that it is recurrent and has an excursion length distribution (from the $0$ state) with power-law decay:
\begin{eqnarray}\label{excursion_law}
P^X(\mE=n)=\frac{\varphi(n)}{n^c},\qquad n\geq 1.
\end{eqnarray} 
Here $\mE$ denotes the length of an excursion from $0$, $c \geq 1$, and $\varphi(\cdot)$ is a slowly varying function, that is, a function satisfying $\varphi(\kappa n)/\varphi(n)\to 1$ as $n$ tends to infinity, for all $\kappa>0$.  

When the chain visits 0 at some time $n$, it encounters a potential of form $u+V_n$, with the values $V_n$ typically modeling variation in monomer species.  This (quenched) pinning model is described by the Gibbs measure
\begin{eqnarray}\label{polymer_measure}
d\mu^{\beta,u,\bV}_{N}(\bx) = \frac{1}{Z_{N}}e^{\beta H_N^u(\bx,\bV)}\, dP^X(\bx)
\end{eqnarray}
where $\bx = (x_n)_{n \geq 0}$ is a path, $\bV = (V_n)_{n \geq 0}$ is a realization of the disorder, and 
\begin{equation} \label{Hdef}
  H_N^u(\bx,\bV) = \sum_{n=0}^N (u+V_n)\delta_0(x_n)
  \end{equation}
and the normalization 
\begin{eqnarray*}
Z_{N}= Z_N(\beta,u,\bV) = E^X\left[e^{\beta H_N^u(\bx,\bV)}\, \right]
\end{eqnarray*}
 is the partition function.  The disorder $\bV$ is a sequence
of i.i.d.~random variables with mean zero, variance one and finite exponential moments; we assume they are Gaussian here to keep the exposition simple, and we denote the distribution of this sequence by $P^V$. The parameter $u\in\mathbb{R}$ is thus the mean value of the potential, and $\beta>0$ is the inverse temperature. 

One would like to understand how the presence of the random potential affects the path properties of the Markov chain, and in particular how the case with disorder differs from the homogeneous case $V_n \equiv 0$.  These effects can be quantified via the free energy and the contact fraction.  To be more precise, letting $L_N = L_N(\bx) = \sum_{n=0}^N\delta_0(x_n)$ denote the local time at $0$, it is proved in \cite{AS06} that there exists a nonrandom $C_q(\beta,u)$ such that
\begin{eqnarray*}
  \lim_{N\to\infty} E_{\mu_{N}^{\beta,u,\bV}} \left(\,\frac{L_N}{N}\, \right)= C_q(\beta,u),\qquad P^V-a.s.
\end{eqnarray*}
for every $\ep>0$;
$C_q(\beta,u)$ is called the {\it quenched contact fraction}.
We will say that the polymer is {\it pinned} at $(\beta,u)$ if $C_q(\beta,u)>0$ and {\it depinned} if $C_q(\beta,u)=0$.  
Monotonicity in $u$ is clear so there exists $u_c^q(\beta)$ such that the polymer is pinned for $u>u_c^q(\beta)$ and depinned for $u < u_c^q(\beta)$. Note that when $c<2$ the Markov chain is null recurrent and the set of paths with any given positive contact fraction is exponentially rare, so pinning requires a compensating energy gain from the potential to offset this entropy cost.  Pinning can also be described in terms of the {\it quenched free energy} $f_q(\beta,u)$ given by
\begin{equation} \label{freeenergy}
  \beta f_q(\beta,u) = \lim_{N \to \infty} \frac{1}{N} \log Z_N(\beta,u,\bV);
  \end{equation}
the fact that the free energy exists and is nonrandom (off a null set of disorders) is proved in \cite{AS06}.  The free energy is 0 if $u < u_c^q(\beta)$ and strictly positive if $u > u_c^q(\beta)$.  The free energy and contact fraction are related by
\[
  C_q(\beta,u) = \frac{\partial}{\partial u} f_q(\beta,u).
  \]

The effect of disorder is studied by comparing the quenched pinning model to its annealed version, obtained by averaging the Gibbs weight over the disorder:
\begin{eqnarray}\label{annealed_measure}
d\nu_N^{\beta,u}=\frac{1}{E^VZ_{N}}e^{\beta(u + \beta^{-1}\log M_V(\beta))L_N}\, dP^X,
\end{eqnarray}
where $M_V(\beta) = E^V(e^{\beta V_1})$ is the moment generating function.  The annealed model is thus equivalent to a quenched model with $V_n \equiv 0$ and $u$ replaced by $u + \beta^{-1}\log M_V(\beta)$, and it is readily shown (see \cite{Gi06}) that the critical point $u_c^a(\beta)$ in the annealed model is the point where the exponent in \eqref{annealed_measure} is 0, which in the Gaussian case means $u_c(\beta) = -\beta/2$.  It is therefore natural to define the variable $\Delta$ by 
\[
  u = -\frac{\beta}{2} + \Delta,
  \]
giving critical points $\Delta_c^a(\beta) = 0$ and $\Delta_c^q(\beta)$.  We then have (cf. \eqref{annealed_measure})
\begin{equation} \label{EVEX}
  E^V Z_N(\beta,u,\bV) = E^X e^{\beta\Delta L_N}.
  \end{equation}
The annealed contact fraction and free energy are given by
\begin{equation} \label{annfreeen}
  \beta f_a(\beta,u) = \lim_{N \to \infty} \frac{1}{N} \log E^V Z_N(\beta,u,\bV) =  \lim_{N \to \infty} \frac{1}{N} \log 
    E^X e^{\beta\Delta L_N}
  \end{equation}
and
\[
  C_a(\beta,u) = \lim_{N\to\infty} E_{\nu_{N}^{\beta,u}} \left(\,\frac{L_N}{N}\,\right)
    = \frac{\partial}{\partial u} f_a(\beta,u)
  \]
respectively.  Since $E^V( \log  Z_N(\beta,u,\bV) ) \leq \log E^V(  Z_N(\beta,u,\bV) )$, we have $f_q \leq f_a$ and therefore $\Delta_c^q(\beta) \geq 0$.  

It is proved in \cite{Al08} that
\begin{equation} \label{Casymp}
  C_a(\beta,u) \sim (\beta\Delta)^{\frac{2-c}{c-1}}\hat{\varphi}_{c-1}\left( \frac{1}{\beta\Delta} \right)
    \quad \text{ as } \beta\Delta \searrow 0
  \end{equation}
for $c<2$, while for $c>2$ the transition is discontinuous:
\begin{equation} \label{Casymp2}
  C_a(\beta,u) \to \frac{1}{E^X(\mE)} > 0 \quad \text{ as } \beta\Delta \searrow 0.
  \end{equation}
Here $\hat{\varphi}_{c-1}$ is a slowly varying function related to $\varphi$; see the proof of Lemma \ref{concavity2} below. This means that the annealed specific heat exponent (which is, roughly speaking, the exponent $\alpha$ such that the free energy decreases as $\Delta^{2-\alpha}$ as $\Delta \to 0$) is $(2c-3)/(c-1)$.  A strong effect of disorder is evident when the specific heat exponent and/or critical point differ between quenched and annealed systems.  In the physics literature, the disorder is said to be {\it relevant} if these specific heat exponents differ.  Predictions from that literature were confirmed rigorously when it was shown that the disorder is relevant for $c>3/2$, i.e. when the specific heat exponent is positive \cite{GT06a}, and (for small $\beta$) irrelevant for $c<3/2$ \cite{Al08}.   In \cite{Al08} the quenched and annealed critical points were also proved equal ($\Delta_c^q(\beta) = 0$) for small $\beta$ when $c<3/2$, and very recently in \cite{DGLT07} it was proved that $\Delta_c^q(\beta)>0$  for all $\beta>0$ when $3/2<c<2$ and when $c>2$, as well as for large $\beta$ with arbitrary $c>1$.  Alternate proofs of these results from \cite{Al08} appear in \cite{To08}.

In \cite{Al08} the following was proved for $3/2<c<2$ and $\beta$ sufficiently small.  In contrast to \eqref{Casymp}, which has an infinite derivative at $\Delta = 0$, we have the linear bound
\[
  C_q(\beta,u) \leq \frac{2\Delta}{\beta},
  \]
so if we define $\Delta_0 = \Delta_0(\beta)$ by 
\[
  \frac{2\Delta}{\beta} = (\beta\Delta)^{\frac{2-c}{c-1}}\hat{\varphi}_{c-1}\left( \frac{1}{\beta\Delta} \right),
  \]
we see that $C_q(\beta,u)$ is forced to be smaller than $C_a(\beta,u)$ for (roughly) $\Delta < \Delta_0$, and in fact $C_q(\beta,u) = o(C_a(\beta,u))$ as $\Delta \to 0$.  On the other hand, given $\ep>0$ there is a $K = K(\ep)$ such that
\begin{equation} \label{largeDelta}
  \left| \frac{C_q(\beta,u)}{C_a(\beta,u)} - 1 \right| < \ep \quad \text{for all } \Delta > K\Delta_0.
  \end{equation}
Up to a constant, then, the value
\[
  \Delta_0(\beta) \sim K_1\beta^{1/(2c-3)} \hp_{c - \frac{3}{2}}\left( \frac{1}{\beta} \right)^{1/2}
  \]
separates those (small) values of $\Delta$ for which the disorder significantly reduces the contact fraction, from those (larger) values for which it does not.  Here $\hp_{c - \frac{3}{2}}$ is another slowly varying function related to $\varphi$.  It should be noted that our $\Delta_0(\beta)$ here is essentially the $\Delta_1(\beta)$ defined in \cite{Al08}, while $\Delta_0(\beta)$ in \cite{Al08} denotes a quantity which is asymptotically a constant multiple of our $\Delta_0(\beta)$ here.  Since we only care about the order of magnitude here, the difference is not significant.

The quenched and annealed polymers, then, must behave quite differently for $\Delta \ll \Delta_0$.  It is useful to describe this heuristically in terms of strategies, by which we mean classes of qualitatively similar paths.  For $\Delta>0$ the strategy of the annealed polymer is essentially to alter its excursion length distribution (compared to $P^X$) so that the mean becomes $1/C_a(\beta,u)$.  The altered distribution which minimizes the relative entropy has the form
\[
  \nu_\alpha(\mE=n) = \frac{e^{-\alpha n} P^X(\mE=n) }{E^X[e^{-\alpha\mE}]}, \quad n \geq 1,
  \]
with $\alpha$ chosen to give the desired mean $E_{\nu_{\alpha}}(\mE) = 1/C_a(\beta,u)$; this is achieved for $\alpha = \beta f_a(\beta,u)$ \cite{GT06a}.  This is the limiting distribution (as $N \to \infty$) for the excursion length in the annealed polymer \cite{Gi06}.  If the quenched polymer is pinned for $\Delta \ll \Delta_0$, it must employ a substantially different strategy, since the quenched contact fraction must be a small fraction of the annealed one.  An example of a candidate for such an alternate strategy is the {\it Imry-Ma strategy}, which essentially consists of locating those rare ``rich'' segments in which the average disorder value is exceptionally large, and making long excursions from one rich segment to another.  The Imry-Ma strategy has been studied in some related contexts (\cite{BG04},\cite{GT06a}).  The significant use, or not, of alternate strategies (Imry-Ma or otherwise) can thus be quantified, at least heuristically, by whether or not $\Delta_c^q(\beta)$ is $o(\Delta_0)$ as $\beta \to 0$.
Our main result here says that alternate strategies are not used significantly:  there exists $\ep_0$ such that the quenched polymer is not pinned when $\Delta < \ep_0\Delta_0$.  In combination with \eqref{largeDelta} this says (still heuristically) that the ability of the quenched polymer to mimic the annealed one breaks down entirely as $\Delta$ passes down through order $\Delta_0$. 

An alternate description of $\Delta_0$ is as follows.  Consider a block of monomers extending one annealed correlation length, that is, having length $M \approx (\beta f_a(\beta,u))^{-1}$.  The fluctuations in the average $\olV = \sum_{i=1}^M V_i$ of the disorder over such a block are of typical order $M^{-1/2}$.  If this typical fluctuation is at least of the order of $\Delta$, then blocks with average potential $u + \olV<u_c^a$ (that is, $\Delta + \olV < 0$) will be relatively common.  In such ``bad'' blocks it will typically not be energetically advantageous for the quenched polymer to be pinned.  It is easily shown using the asymptotics established in \cite{Al08} that for $\Delta = \Delta_0$, $M^{-1/2}$ and $\Delta$ are of the same order as $\beta \to 0$, while $M^{-1/2} \gg \Delta$ if $\Delta \ll \Delta_0$, and $M^{-1/2} \ll \Delta$ if $\Delta \gg \Delta_0$.  Thus as $\Delta \searrow 0$, $\Delta_0$ is essentially the order at which ``bad'' blocks of length $M$ start to become common.

The question of whether the annealed and quenched critical points are different has
concerned the physics community, with disagreeing predictions. Based on 
nonrigorous expansions and renormalization techniques, it was claimed in
 \cite{FLNO88} that when $c=3/2$ and $\varphi$ is asymptotically constant the two critical points are equal, while in 
\cite{DHV92} it was claimed that they are different and a prediction on the gap
between them was provided.  The question was also studied numerically in \cite{NN01}.

The following is our main result.

\begin{theorem}\label{thm}
Suppose that $\bV=(V_n)_{n\geq 1}$ is a sequence of $i.i.d.$ standard Gauusian random variables.  Then, writing $u - -\frac{\beta}{2} + \Delta$,

(i) if \eqref{excursion_law} holds with $c>3/2$, then there exist $\ep_0,\ep_1>0$ such that for all $\beta,\Delta>0$ satisfying $\Delta < \ep_0\Delta_0(\beta)$ and $\beta\Delta < \ep_1$, we have $C_q(\beta,u) = 0$; therefore $u_c^q(\beta) > u_c^a(\beta)$.  If $3/2<c<2$ then there is a constant $K$ such that for all sufficiently small $\beta$ we have $\ep_0 \Delta_0 < u_c^q(\beta) - u_c^a(\beta) < K\Delta_0$.

(ii) if \eqref{excursion_law} holds with $c=3/2$ and $\varphi(n) \to 0$ as $n \to \infty$, then $u_c^q(\beta) > u_c^a(\beta)$ for all $\beta>0$.
\end{theorem}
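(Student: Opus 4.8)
It suffices to prove the one genuinely new ingredient: that the quenched free energy $f_q(\beta,-\beta/2+\Delta)$ vanishes when $\Delta<\ep_0\Delta_0(\beta)$ and $\beta\Delta<\ep_1$ (in part (i)), and whenever $\Delta$ is small enough depending on $\beta$ (in part (ii)).
Indeed, since $f_q(\beta,u)=0$ precisely for $u\le u_c^q(\beta)$, this gives $u_c^q(\beta)\ge -\beta/2+\ep_0\Delta_0(\beta)>u_c^a(\beta)$ and hence $C_q(\beta,u)=\partial_u f_q(\beta,u)=0$ for $\Delta<\ep_0\Delta_0$; for $3/2<c<2$ and $\beta$ small the side condition $\beta\Delta<\ep_1$ is implied by $\Delta<\ep_0\Delta_0$, while for large $\beta$ it becomes the binding constraint and still yields $u_c^q(\beta)>u_c^a(\beta)$ (which also recovers the result of \cite{DGLT07} for $c>3/2$).
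The matching upper bound $u_c^q(\beta)-u_c^a(\beta)<K\Delta_0(\beta)$ for $3/2<c<2$ is not new: by \eqref{largeDelta} (from \cite{Al08}) one has $C_q\approx C_a>0$, hence pinning, as soon as $\Delta>K\Delta_0$.

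To prove $f_q=0$ I will use the fractional moment method with coarse graining.
Fix $\gamma\in(1/c,1)$; since $E^V[\log Z_N]\le\gamma^{-1}\log E^V[Z_N^\gamma]$ and $f_q\ge0$, it suffices to show $\sup_N E^V[Z_N^\gamma]<\infty$.
Set the block length $M=M(\beta,\Delta)$ to be a fixed multiple of the annealed correlation length $(\beta f_a(\beta,u))^{-1}$, partition $\{1,\dots,N\}$ into blocks $B_1,\dots,B_{N/M}$ of length $M$, and decompose $Z_N$ over the set $\mathcal I$ of blocks in which the chain visits $0$.
The standard coarse-graining estimate gives $Z_N\le\sum_{\mathcal I}\hat Z_{\mathcal I}$, where $\hat Z_{\mathcal I}$ factors into inter-block excursion weights $w_g$ ($w_1=O(1)$, $w_g\asymp(gM)^{1-c}\varphi(gM)$ for $g\ge2$) times single-block pinning partition functions $\hat Z_{B_j}$, each depending only on $(V_n)_{n\in B_j}$.
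Using subadditivity of $x\mapsto x^\gamma$, then $E^V$, then independence across blocks,
\[
  E^V[Z_N^\gamma]\ \le\ \sum_{\ell\ge1}(A\rho)^\ell\ =\ \frac{A\rho}{1-A\rho},\qquad \rho:=E^V\big[\hat Z_B^{\,\gamma}\big],\quad A:=\sum_{g\ge1}w_g^\gamma ,
\]
where $A$ converges because $\gamma c>1$ and stays bounded as $M\to\infty$ (the $g\ge2$ part tends to $0$).
So it is enough to show $A\rho<1$, for which it suffices that $\rho\to0$ as $\Delta\to0$.

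The crux is to make $\rho$ small by a change of measure, since for $M$ of the order of the correlation length $E^V[\hat Z_B]\asymp1$ and the bound $\rho\le(E^V[\hat Z_B])^\gamma$ is useless.
Let $\tilde P$ be the law under which each $V_n$ in a block is Gaussian with mean $-\delta$, $\delta=\eta M^{-1/2}$ with $\eta>0$ a small fixed constant.
By H\"older,
\[
  \rho=\tilde E\Big[\frac{dP^V}{d\tilde P}\,\hat Z_B^{\,\gamma}\Big]\ \le\ \Big(\tilde E\big[(dP^V/d\tilde P)^{1/(1-\gamma)}\big]\Big)^{1-\gamma}\big(\tilde E[\hat Z_B]\big)^\gamma ,
\]
where the Gaussian Radon--Nikodym factor equals $\exp(O(M\delta^2))=e^{O(\eta^2)}$, bounded, and $\tilde E[\hat Z_B]\le E^X[e^{\beta(\Delta-\delta)L_M}]$ with $\delta\gg\Delta$, so the effective reward is negative.
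Using the excursion law \eqref{excursion_law} and the subcritical renewal asymptotics — in the regime $\lambda:=\beta\delta\gg P^X(\mE>M)$ one has $E^X[e^{-\lambda L_M}]\asymp P^X(\mE>M)/\lambda$ — together with $P^X(\mE>M)\asymp\beta\Delta$ (from \cite{Al08}, valid for $\beta\Delta<\ep_1$), one gets $\tilde E[\hat Z_B]\asymp\Delta/\delta=\Delta M^{1/2}/\eta$, hence $\rho\le e^{O(\eta^2)}(\Delta M^{1/2}/\eta)^\gamma$.
Thus $A\rho<1$ once $\Delta M^{1/2}$ is small enough, so $f_q=0$ whenever $\Delta M^{1/2}\to0$.
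Finally, from the asymptotics of \cite{Al08}, $M\asymp(\beta\Delta)^{-1/(c-1)}$ up to slowly varying factors, so $\Delta M^{1/2}\asymp(\Delta/\Delta_0(\beta))^{(2c-3)/(2(c-1))}$: for $c>3/2$ the exponent is positive and this is $\ll1$ exactly when $\Delta\ll\Delta_0(\beta)$, giving part (i); in the marginal case $c=3/2$ the power degenerates and $\Delta M^{1/2}\asymp\varphi(M)/\beta$, which tends to $0$ iff $\varphi(n)\to0$, giving part (ii).
(The case $c=2$ is treated the same way with extra care in the slowly varying bookkeeping.)

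\textbf{Main obstacle.}
The delicate point is the joint calibration of $M$ and $\delta$: one needs $\delta\gg\Delta$ (which forces $M\asymp$ correlation length) so that $\tilde E[\hat Z_B]$ is genuinely small, and simultaneously $\delta\ll M^{-1/2}$ so that the Radon--Nikodym cost stays bounded, and these are compatible precisely at the scale $\Delta\asymp\Delta_0$.
Making this rigorous requires (a) the sharp two-sided estimate $E^X[e^{-\lambda L_M}]\asymp P^X(\mE>M)/\lambda$ with explicit control of the slowly varying corrections — which is essential in the $c=3/2$ case, where the whole gap is an $o(1)$ effect — and (b) a version of the coarse-graining decomposition in which the single-block partition functions are truly disorder-independent across blocks.
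A further concern is that for $c=3/2$ a per-block mean shift may be too crude, and one may instead need a change of measure correlated within the block (a quadratic tilt); in that case the governing quantity becomes a covariance sum controlled by $\sum_n P^X(n\in\mR)^2$, whose divergence is again equivalent to $\varphi(n)\to0$.
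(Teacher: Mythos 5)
Your route is genuinely different from the paper's. The paper never takes fractional moments: it decomposes paths according to the skeleton of long excursions, kills the sparse-return contribution by a purely annealed bound combined with a semi-coarse-grained combinatorial estimate (Lemmas \ref{LDP}--\ref{LDP_Lower}, Proposition \ref{ZNbound}), and kills the dense-return contribution by a \emph{semi-annealed} bound obtained by conditioning on the block-averaged disorder $\olV^{\mL^*(\hmJ)}$, which produces the quadratic term $-\beta^2L^2/2|\mL^*(\hmJ)|$ in \eqref{condlexp}; this is exploited only on an event $T_N$ of disorders with $P^V(T_N)\geq\rho>0$, and the conclusion uses the nonrandomness of $f_q$. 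Your proposal is instead the fractional-moment/coarse-graining/change-of-measure strategy of \cite{DGLT07}, recalibrated at the annealed correlation length with a mean shift of size $M^{-1/2}$. That strategy is a legitimate alternative and can in principle reach the scale $\Delta_0$; what it buys is a shorter high-level argument with no analogue of $T_N$, while the paper's approach buys uniform treatment of $c\geq 2$, no per-block fractional-moment calibration, and direct control at the sharp scale. Your reduction of the theorem to ``$f_q=0$ for $\Delta<\ep_0\Delta_0$, $\beta\Delta<\ep_1$'', with the upper bound $u_c^q-u_c^a<K\Delta_0$ taken from \eqref{largeDelta}, is correct and matches the paper.

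However, as written the proposal has concrete gaps. First, the coarse-graining bookkeeping is inconsistent: with $w_g\asymp (gM)^{1-c}\varphi(gM)$ the series $\sum_g w_g^\gamma$ diverges for every $\gamma<1$ when $c<2$ (you would need $\gamma(c-1)>1$, not $\gamma c>1$). The correct factorization sums over entry/exit points inside the visited blocks, which turns the gap weight into roughly $g^{-c}$ times a factor of order $P^X(\mE>M)$ per visited block, and the per-block objects become partition functions pinned at prescribed entry and exit points; the smallness of their fractional moments must then be proved uniformly in those endpoints and traded off against these factors. This is precisely the delicate point at which \cite{DGLT07} lose the sharp order, so it cannot be waved through. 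Second, the key renewal estimate $E^X[e^{-\lambda L_M}]\asymp P^X(\mE>M)/\lambda$ (and its endpoint-pinned version) needs a real proof with uniform control of the slowly varying corrections: the easy bound $P^X(L_M\le k)\le (k+1)P^X(\mE>M/(k+1))$ gives an extra factor $\lambda^{-(c-1)}$, which destroys the computation; and the range of validity $\lambda^{-1}\ll M^{c-1}/\varphi(M)$ is exactly borderline at $\Delta\asymp\Delta_0$, so constants matter. Third, at $c=3/2$ the entire effect is carried by $\varphi$, and it is not established that the homogeneous mean shift survives this bookkeeping (you yourself flag that a correlated tilt may be needed); likewise ``$c=2$ treated the same way'' needs the analogue of the paper's \eqref{delta*c2}, since $M$ and $\delta^*$ behave differently when $E^X(\mE)=\infty$ with $c=2$. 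So the strategy is viable, but the proposal does not yet constitute a proof of either part of the theorem.
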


Theorem \ref{thm}(i) improves on the recent result in \cite{DGLT07} which establishes a positive lower bound for $\Delta_c^q(\beta)$.  The lower bound in \cite{DGLT07}, however, is $o(\Delta_0)$ and therefore does not rule out the significant use of alternate strategies.  Our proof is very different from \cite{DGLT07} as well.

Theorem \ref{thm}(ii) improves a result in \cite{DGLT07} which requires $\varphi(n) = o((\log n)^{-\eta})$ for some $\eta>1/2$.  The condition $\varphi(n) \to 0$ is equivalent to $\hp_{c-1}(t) \to \infty$ as $t \to \infty$, for $\hp_{c-1}$ of \eqref{Casymp} (see \cite{Al08}.)  For $c= 3/2$ this is equivalent to the contact fraction having an infinite derivative (as a function of $\Delta$) at $\Delta = 0$; see Lemma \ref{concavity2} below.

In \cite{Al08} it is proved that for the marginal case $c=3/2$ the
disorder is irrelevant, i.e. critical points and critical exponents are the same for quenched and annealed,
as long as the slowly varying function $\varphi(\cdot)$ satisfies the condition
\begin{eqnarray*}
  \sum_{n=1}^\infty\frac{1}{n(\varphi(n))^2}<\infty.
\end{eqnarray*}
There is a gap between such $\varphi$ and those covered by Theorem \ref{thm}(ii), and this gap contains the asymptotically constant case, $\varphi(n) \to a>0$, which includes symmetric simple random walk in 1 and 3 dimensions.  As we have noted, the physics literature contains disagreeing predictions for this case.  

\section{Notation and idea of the proof.}
{\bf Idea of the proof.}
We begin with an informal outline of the proof and the introduction of some preliminary notation.

We use $\delta^* = \delta^*(\Delta)$ as an alternate notation
for the annealed contact fraction.
The {\it annealed correlation length} is defined
to be $(\beta f_a(\beta,u))^{-1}$.  The annealed free energy of \eqref{annfreeen} is given by the variational formula
\[
  \beta f_a(\beta,u) = \sup_{\delta \geq 0} (\beta\Delta\delta - \delta I_{\mE}(\delta^{-1})),
  \]
and $\delta^*$ is the value where this sup occurs \cite{AS06}.  Here $I_{\mE}$ is the large-deviations rate function of the excursion length variable $\mE$.  For $c>1$ we have
\[
  \frac{\beta\Delta\delta^*}{\beta f_a(\beta,u)} \to (c-1) \wedge 1 \quad \text{as } \beta\Delta \to 0;
  \]
this is proved in \cite{Al08} for $c<2$ and extends readily to $c \geq 2$.  
Therefore the annealed correlation length is asymptotically proportional to 
\begin{eqnarray}\label{M_def}
  M = M(\Delta)=\frac{1}{\beta\Delta\delta^*(\Delta)}.
\end{eqnarray}

In order to show that the quenched free energy is zero, we need to show that the quenched partition 
function increases at most subexponentially. To do so we need to divide the paths into classes, and control the contribution to the partition function from each class.

For a path $\bx=(x_n)_{n\leq N}$, an excursion is called {\it long} if it exceeds a certain scale $R=R(\Delta)$ (to be determined), and {\it short} otherwise. We can view excursions as open intervals in the time axis; the closed intervals between long excursions are called {\it occupied segments}, and the union of the occupied segments forms the {\it skeleton} of the path $\bx$, denoted $\mS(\bx,R)$, or just $\mS(\bx)$ if no confusion is likely.  The {\it skeleton contact fraction} of $\bx$ is the fraction of indices $i \in \mS(\bx)$ with $x_i = 0$.  We will show that attention can effectively be restricted to skeletons in which all occupied segments have length at least $ M$.
A path $\bx$ has {\it sparse returns} if the skeleton contact fraction is less than $\delta_2=\ep_2\delta^*(\Delta)$, (with $\ep_2$ small, to be determined) and {\it dense returns}, otherwise.  As we will see, sparse-return paths are exponentially rare, and even in the annealed model their contribution to the partition function does not grow exponentially.  This annealed contribution is an upper bound for the quenched case.

More precisely, for a skeleton $\mJ$ we define $|\mJ|$ to be the number of sites in $\mJ$, $m(\mJ)+1$ to be the number of occupied segments in $\mJ$,
\[
  \mW(\mJ) = \{\bx: \mS(\bx) = \mJ\}, 
  \]
\[
    \mW_-(\mJ,\delta_2) 
    = \{\bx: \mS(\bx) = \mJ \text{ and $\bx$ has sparse returns} \},
  \]
and 
\[
   \mW_+(\mJ,\delta_2) = \{\bx: \mS(\bx) = \mJ \text{ and $\bx$ has dense returns} \}.
   \]
Ideally we would like to show that $\log P^X(\mW_-(\mJ,\delta_2) \mid \mW(\mJ)) \leq -K'|\mJ|/R$ for some $K,K'$, so the contribution to the partition function from $\mW_-(\mJ,\delta_2)$ has logarithm at most
\begin{eqnarray} \label{freesparse}
  \beta\Delta\delta_2|\mJ| -\frac{K'|\mJ|}{R} + \log P^X(\mW(\mJ)).
\end{eqnarray}
The sum of the first two terms in \eqref{freesparse} is negative, if $K'$ is large and we choose $R=R(\Delta)=\frac{1}{\beta\Delta\delta_2}$.  The same is true for the sum of the first three terms, if we discard (in an appropriate sense) short occupied segments to ensure that $|\mJ|/m(\mJ)$ is large.  Therefore in this case the whole expression in \eqref{freesparse} would be negative.  We cannot actually do exactly this; we need to incorporate coarse-graining to group together similar skeletons $\mJ$ first, and there is a positive term proportional to $m(\mJ)$ in \eqref{freesparse}, but the idea is the same.

In contrast to sparse returns, the contribution from paths with dense returns cannot be handled by comparison to the annealed system.  In this case we will use {\it semianneled}
estimates. That is, we will first compute the conditional expectation of the contribution to the partition function from $\mW_+(\mJ,\delta_2)$ given a certain average value $\overline{V}^{\mJ}$
over the skeleton $\mJ$ (or more precisely, over a coarse-grained approximation to $\mJ$.)  This conditional expectation is easily shown to be 
\begin{equation} \label{condlexp}
  E^V\left[ E^X\left( e^{\beta H_N^u(\bx,\bV)}; \mW_+(\mJ,\delta_2) \right)\ \bigg|\ \olV^{\mJ} \right] =
  E^X\left[e^{\beta(\Delta+\overline{V}^{\mJ})L_N-\frac{\beta^2L_N^2}{2|\mJ|}};\,\mW_+(\mJ,\delta_2)\right].
\end{equation} 
The quadratic term $-\beta^2 L_N^2/2|\mJ|$ in the exponent in \eqref{condlexp} reflects the fact that conditioning on $\olV^{\mJ}$ reduces the exponential moment (under $E^V$) of $H_N^u(\bx,\bV)$, and this reduction increases with the skeleton contact fraction $L_N/|\mJ|$; for dense-return paths the reduction becomes large enough to be useful in establishing that the partition function grows at most subexponentially.
An annealed estimate at this point would amount to taking the expectation with respect to $\overline{V}^{\mJ}$in \eqref{condlexp}. But this would cancel the essential quadratic term.
Instead, letting $D_{\mJ}(\bx)$ denote the contact fraction within $\mJ$, we will find a function $g(\mJ,\delta)$ and a set $T_N$ of disorders such that $\liminf_NP^V(T_N)>0$ and such that for every disorder in $T_N$ and every path in $\mW_+(\mJ,\delta_2)$, we have $\beta(\Delta+\overline{V}^{\mJ})L_N \leq g(\mJ,D_{\mJ}(\bx))$.  Then also
\begin{eqnarray}\label{Tlambda}\\
  \beta(\Delta+\overline{V}^{\mJ})L_N \leq
  \lambda\beta(\Delta+\overline{V}^{\mJ})L_N +(1-\lambda)g(\mJ,D_{\mJ}(\bx)) \nonumber
\end{eqnarray}
for every $0<\lambda<1$.
The next step is to perform an annealed estimate for the (semiannealed) partition function which has the right side of \eqref{condlexp} replaced by its upper bound from \eqref{Tlambda}. 
The logarithm of the exponential moment of $\lambda\beta\overline{V}^{\mJ}L_N$ will then be $\lambda^2\beta^2L_N^2/2|\mJ|$ which now
does not fully cancel the quadratic term $-\beta^2L_N^2/2|\mJ|$ and will result in the desired control. By means of this estimate we will only be able
to say that the partition function on $\mW_+(\mJ,\delta_2)$ increases subexponentially on the set $T_N$.  But since $T_N$ has uniformly positive
probability and the quenched free energy is nonrandom off a null set of disorders, necessarily the quenched free energy will be zero.

As noted in \cite{GT06a}, for technical convenience the partition function $Z_N$ in \eqref{freeenergy} can be replaced by the constrained partition function
\begin{eqnarray}\label{constrain_partition}
Z_N^0=E^X\left[e^{\beta H_N^u(\bx,\bV)}\,\delta_0(x_N) \right]
\end{eqnarray}
as both give the same free energy and contact fraction. 

The assumption of Gaussian disorder is only used to get neat expressions, such as in \eqref{condlexp}, when one considers conditional expectations. Otherwise it does not play a significant role and so the method should  generalize to other distributions with a finite exponential moment.

{\bf Notation.} Throughout the paper, $K_i$ and $\ep_i$ represent constants which depend only on $c$ and $\varphi$ from \eqref{excursion_law}.  Define
\begin{eqnarray}\label{Rdelta}
  R(\Delta) = \frac{1}{\beta\Delta\delta_2},
\end{eqnarray}
and
$\delta_2=\ep_2\delta^*(\Delta)$ with $\ep_2$ to be specified, satsfying $\ep_2 < 1/2$ so that $2 M(\Delta) < R(\Delta)$.
For a path $\bx$ and $A \subset \RR$ we define the local time of $\bx$ in $A$ and the corresponding contact fraction:
\[  
  L_A = L_A(\bx) = \sum_{n \in A} \delta_{0}(x_n), \quad D_A = D_A(\bx) = \frac{L_A(\bx)}{|A|},
  \]
where $|A|$ denotes the number of sites in $A$.  We abbreviate $L_{[0,N]}$ as $L_N$.  For a set $A$ of nonnegative integers, we define the average disorder
\begin{eqnarray*}
\overline{V}^{A}=\frac{1}{|A|}\sum_{n\in A} V_n.
\end{eqnarray*} 
For a general subset $B$ of $\RR$, we define $\olV^B = \olV^{B \cap \ZZ}$.  For $n \geq 1$ we let
\[
  \olm(n) = E^X(\mE; \mE \leq n) = \sum_{k=1}^n k^{-c}\varphi(k).
  \]
We denote the length of the $i$th excursion from 0 for a path $\bx$ by $\mE_i = \mE_i(\bx)$ for $i \geq 1$.

Let $\Gamma,\Gamma_1,\Gamma_2$  sets of paths. We use the notation
\begin{eqnarray} \label{ZNGamma}
Z_N(\Gamma)&=&E^X\left[e^{\beta H_N^u(\bx,\bV)} \delta_\Gamma(\bx)\,\right]\\
Z_N(\Gamma_1|\Gamma_2)&=&E^X\left[e^{\beta H_N^u(\bx,\bV)} \delta_{\Gamma_1}(\bx)\,|\,\Gamma_2\,\right], \notag
\end{eqnarray} 
and similarly for $Z_N^0$.
For $a \leq b$ we can replace $\sum_{n=0}^N$ with $\sum_{n=a}^b$ in the definition \eqref{Hdef} of the Hamiltonian, and we may restrict to a set $\Gamma$ of paths as in \eqref{ZNGamma}; we denote the resulting Hamiltonian and partition function by $H_{[a,b]}^u(\bx,\bV)$ and $Z_{[a,b]}(\Gamma)$, respectively, suppressing the dependence on $(\beta,u,\bV)$ in the latter notation.

\begin{definition}
An $R$-{\it skeleton} (or just a {\it skeleton} if confusion is unlikely) in $[0,N]$ is a set of form $[0,N]\setminus\cup_{i=1}^m(a_i,b_i)$, with $m \geq 0$, $0\leq a_1<b_1<\dots<a_m<b_m\leq N$ and $b_i - a_i \geq R$ for $i=1,\dots,m$.  In this context we use the notation $b_0 = 0, a_{m+1} = N$.  We denote a generic skeleton by $\mJ$, and $m(\mJ)$ denotes the number of open intervals in $[0,N]\setminus \mJ$.  The intervals $[b_{i-1},a_i], 1 \leq i \leq m+1$, are called the {\it occupied segments} of $\mJ$.  An occupied segment is {\it short} if its length is at most $ M(\Delta)$.  $[0,a_1]$ and $[b_m,N]$ are the {\it initial} and {\it final} occupied segments, respectively, and all other occupied segments are called {\it central}.  For a skeleton $\mJ$ we then define
\[
  \mW(\mJ) = \{\bx: \mS(\bx) = \mJ\},
  \]
\[
  \mW(\mJ,\delta) = \{\bx: \mS(\bx) = \mJ, D_{\mJ}(\bx) = \delta\},
  \]
\[
  \mW_+(\mJ,\delta) = \{\bx: \mS(\bx) = \mJ, D_{\mJ}(\bx) > \delta\},
  \]
\[
  \mW_-(\mJ,\delta) = \{\bx: \mS(\bx) = \mJ, D_{\mJ}(\bx) \leq \delta\}.
  \]
A skeleton $\mJ$ and a value $\delta>0$ are called {\it compatible} if $\mW(\mJ,\delta) \neq \phi$.  For a skeleton in $[0,N]$, a compatible $\delta$ is always a rational number with denominator at most $N$.
\end{definition}

\begin{definition}
A {\it lifted skeleton}, generically denoted $\hmJ$, is a skeleton in which all central occupied segments are long.   To each skeleton $\mJ$ there corresponds a lifted skeleton $\hmL(\mJ)$, obtained by deleting from $\mJ$ all short central occupied segments.  We define the {\it lifted skeleton of} $\bx$ to be $\hmS(\bx) = \hmL(\mS(\bx))$, and form classes of paths according to the contact fraction in this lifted skeleton:
\[
  \hmW_+(\hmJ,\delta) = \{\bx: \hmS(\bx) = \hmJ, D_{\hmJ}(\bx) > \delta\},
  \]
\[
  \hmW_-(\hmJ,\delta) = \{\bx: \hmS(\bx) = \hmJ, D_{\hmJ}(\bx) \leq \delta\}.
  \]
We then define
\begin{equation} \label{Tdef}
  \mT(\delta) = \bigcup_{\hmJ} \hmW_-(\hmJ,\delta) = \{\bx: D_{\hmS(\bx)}(\bx) \leq \delta\},
  \end{equation}
\begin{equation} \label{Ddef}
    \quad \mD(\delta) =  \bigcup_{\hmJ} \hmW_+(\hmJ,\delta) = \{\bx: D_{\hmS(\bx)}(\bx) > \delta\}.
  \end{equation}
A path in $\mT(\delta_2)$ is said to have {\it sparse returns}, and a path in $\mD(\delta_2)$ said to have {\it dense returns}.
\end{definition}

When we will deal with paths having dense returns, we will need to use a coarse graining (CG) scheme, which we introduce now.

\begin{definition} \label{CGdefs}
We fix $\ep_3$, to be specified, such that $\ep_3 R(\Delta)$ is an integer.  A {\it CG block} is an interval of form $[(k-1)\ep_3 R(\Delta),k\ep_3 R(\Delta)]$ with $k \geq 1$.  A {\it CG point} is an endpoint of a CG block.  We assume that $N$ is a CG point.  A {\it CG skeleton} is a skeleton $[0,N]\setminus\cup_{i=1}^m(a_i,b_i)$ in which all $a_i,b_i$ are CG points.  We denote a generic CG skeleton by $\mJ^*$, and write $w(\mJ^*)$ for the number of CG blocks comprising $\mJ^*$.  Given a skeleton $\mJ = [0,N]\setminus\cup_{i=1}^m(a_i,b_i)$ we let $a_i^*$ and $b_i^*$ denote the smallest CG point greater than $a_i$ and the largest CG point less than $b_i$, respectively, and we define the CG skeleton $\mL^*(\mJ) = [0,N]\setminus\cup_{i=1}^m(a_i^*,b_i^*)$, which is the union of all CG blocks that intersect $\mJ$.  If $\mJ$ is an $R(\Delta)$-skeleton then $\mL^*(\mJ)$ is a $(1-2\ep_3)R(\Delta)$-skeleton.  We let $\mS^*(\bx) = \mL^* ( \mS(\bx))$.  A {\it lifted CG skeleton} is a CG skeleton of form $\mL^*(\hmJ)$ where $\hmJ$ is a lifted skeleton; we denote a generic lifted CG skeleton by $\hmJs$.  We again form classes of paths according to the contact fraction in the lifted CG skeleton:
\[
  \mW^*(\mJ^*) = \{\bx: \mS^*(\bx) = \mJ^*\},
  \]
\[
  \mW^*(\mJ^*,\delta) = \{\bx: \mS^*(\bx) = \mJ^*, D_{\mJ^*}(\bx) = \delta\},
  \]
\[
  \hmW^*(\hmJ,\delta) = \{\bx: \mS(\bx) = \hmJ, D_{\mL^*(\hmJ)}(\bx) = \delta\}.
  \]
\end{definition}

To deal with paths having sparse returns, we need a different coarse-graining scheme, as follows.

\begin{definition} \label{semiCGdefs}
With $R = R(\Delta)$, fix some (small) $\ep_4>0$ such that $(1+\ep_4)^{l_1} = R$ for some integer $l_1$, and let $l_0 = \max\{k: (1 + \ep_4)^k <  M(\Delta)/4\}$.  Define intervals 
\[
  I_{l_0} = [0,(1 + \ep_4)^{l_0}], \qquad I_k = ((1+\ep_4)^{k-1},(1+\ep_4)^k], \quad l_0 < k \leq l_1,
  \]
\[
  I_{l_1+k} = (R + (k-1)\ep_4 R,R + k\ep_4 R], \quad k \geq 1.
  \]
We write $n_k^-$ and $n_k^+$ for the smallest and largest integers, respectively, in $I_k$.  A \emph{semi-CG skeleton} is a skeleton in which each occupied segment has length in $\{n_k^+, k \geq l_0\}$.  We denote a generic semi-CG skeleton by $\mJ^s$.  Given a skeleton $\mJ = [0,N]\setminus\cup_{i=1}^m(a_i,b_i)$ we let $a_i^s = b_{i-1} + \min\{ n_k^+: b_{i-1} + n_k^+ \geq a_i \}$ and define the semi-CG skeleton $\mL^s(\mJ) = [0,N]\setminus\cup_{i=1}^m(a_i^s,b_i)$, which is the smallest semi-CG skeleton containing $\mJ$.  Note that $\mL^s(\mJ)$ is determined by specifying for each occupied segment of $\mJ$ (i) its exact starting point, and (ii) the value of $k$ for which $I_k$ contains the segment's length.  Also, if $\mJ$ is an $R$-skeleton then $\mL^s(\mJ)$ is a $((1-\ep_4)R)$-skeleton.  We let $\mS^s(\bx) = \mL^s ( \mS(\bx))$.  A \emph{lifted semi-CG skeleton} is a semi-CG skeleton of form $\mL^s(\hmJ)$ where $\hmJ$ is a lifted skeleton; we denote a generic lifted semi-CG skeleton by $\hmJ^s$.  We once more form classes of paths according to the contact fraction in the lifted semi-CG skeleton:
\[
  \mW^s(\mJ^s) = \{\bx: \mS^s(\bx) = \mJ^s\},
  \]
\[
  \mW_-^s(\hmJ^s,\delta) = \{\bx:  \hmS^s(\bx) = \hmJ^s, D_{\hmS(\bx)}(\bx) \leq \delta\}.
  \]
Note that in contrast to $\hmW^*(\hmJ,\delta)$ in Definition \ref{CGdefs}, the condition that the density of returns be at most $\delta$ here is applies to the density in $\hmS(\bx)$.
\end{definition}

\section{Paths With Dense Returns}

Recall that a path is said to have dense returns if its skeleton contact fraction fraction is greater than $\delta_2 =\ep_2\delta^*(\Delta)$.
Let also $\alpha_0 = \alpha_0(\beta\Delta)$ be given by 
\[
  E^X\left[ e^{-\alpha_0\mE}\right]= e^{-\beta\Delta}.
  \]
The following result on the concavity of the contact fraction shows the relevance of our hypotheses on $c$ and $\varphi$.

\begin{lemma} \label{concavity2}
(i) Suppose that $P^X$ satisfies \eqref{excursion_law} with $c>3/2$.  There exists $\ep_5$ as follows.  For every $K>0$ there exists $\ep>0$ such that $0<\beta\Delta<\ep_5$ and $\Delta < \ep \Delta_0(\beta)$ imply $\delta^*(\Delta) \geq K\Delta/\beta$.

(ii) Suppose that $P^X$ satisfies \eqref{excursion_law} with $c=3/2$ and $\varphi(n) \to 0$ as $n \to \infty$.  Then $\delta^*(\Delta)/\Delta \to \infty$ as $\Delta \to 0$.
\end{lemma}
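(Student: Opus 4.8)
Since $\delta^*(\Delta)$ is the annealed contact fraction $C_a(\beta,-\tfrac{\beta}{2}+\Delta)$, and since by \eqref{EVEX} the annealed model depends on $(\beta,\Delta)$ only through the product $\beta\Delta$, the quantity $\delta^*$ is a function of $\beta\Delta$ alone; both parts will follow by feeding the annealed asymptotics \eqref{Casymp}--\eqref{Casymp2} into the definition of $\Delta_0$, keeping track of the slowly varying factors. First I would recall from \cite{Al08} the provenance of $\hat{\varphi}_{c-1}$: with $\alpha_0$ as above (so $\alpha_0=\beta f_a(\beta,u)$), the Tauberian behaviour of $E^X[e^{-\alpha\mE}]$ as $\alpha\to 0$ makes $\alpha_0$, hence $\delta^*=\partial_\Delta f_a$, regularly varying, which is exactly the content of \eqref{Casymp}: $\delta^*(\Delta)\sim(\beta\Delta)^{\gamma}\hat{\varphi}_{c-1}(1/(\beta\Delta))$ with $\gamma:=\tfrac{2-c}{c-1}$, equal to $1$ for $c=3/2$ and lying in $[0,1)$ for $3/2<c\le 2$, the slowly varying function $\hat{\varphi}_{c-1}$ being manufactured from $\varphi$ by this transform. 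I would also record the equivalence, established in \cite{Al08}, that $\varphi(n)\to 0$ if and only if $\hat{\varphi}_{c-1}(t)\to\infty$ as $t\to\infty$.

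Part (ii) is then immediate: for $c=3/2$, \eqref{Casymp} gives $\delta^*(\Delta)/\Delta\sim\beta\,\hat{\varphi}_{1/2}(1/(\beta\Delta))$ as $\Delta\to 0$, and since $\varphi(n)\to 0$ forces $\hat{\varphi}_{1/2}(t)\to\infty$, the right side tends to $\infty$ with $\beta$ held fixed.

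For part (i) with $3/2<c<2$ I would argue as follows. Pick $\ep_5>0$ so small that $\beta\Delta<\ep_5$ implies $\delta^*(\Delta)\ge\tfrac12(\beta\Delta)^{\gamma}\hat{\varphi}_{c-1}(1/(\beta\Delta))$, legitimate by \eqref{Casymp} since $\delta^*$ depends on $\beta\Delta$ alone. Solving the defining relation $\tfrac{2\Delta_0}{\beta}=(\beta\Delta_0)^{\gamma}\hat{\varphi}_{c-1}(1/(\beta\Delta_0))$ for $\beta^2$ and substituting, one obtains, whenever $\Delta<\ep\Delta_0(\beta)$ and $\beta\Delta<\ep_5$,
\[
  \frac{\beta\,\delta^*(\Delta)}{\Delta}\ \ge\ \Big(\frac{\Delta_0}{\Delta}\Big)^{1-\gamma}\,
  \frac{\hat{\varphi}_{c-1}(1/(\beta\Delta))}{\hat{\varphi}_{c-1}(1/(\beta\Delta_0))}\ \ge\ \frac12\Big(\frac{\Delta_0}{\Delta}\Big)^{1-\gamma-\eta}\ \ge\ \frac12\,\ep^{\,\gamma+\eta-1},
\]
where the middle inequality is Potter's bound $\hat{\varphi}_{c-1}(1/(\beta\Delta))/\hat{\varphi}_{c-1}(1/(\beta\Delta_0))\ge\tfrac12(\Delta_0/\Delta)^{-\eta}$ for a small fixed $\eta>0$ (valid once the arguments are large, using $\Delta<\Delta_0$), and the last step uses $\Delta_0/\Delta>1/\ep$ together with $\gamma+\eta-1<0$. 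Taking $\eta=(1-\gamma)/2$, so that $\ep^{\gamma+\eta-1}=\ep^{(\gamma-1)/2}\to\infty$, and then choosing $\ep=\ep(K)$ small enough makes the bound at least $K$, i.e.\ $\delta^*(\Delta)\ge K\Delta/\beta$. The case $c=2$ is identical with $\gamma=0$. For $c>2$ the transition is discontinuous: by \eqref{Casymp2}, $\delta^*(\Delta)$ is bounded below by a positive constant once $\beta\Delta<\ep_5$, and the relevant scale $\Delta_0(\beta)$ is then of order $\beta$, so $\Delta<\ep\Delta_0(\beta)$ makes $\Delta/\beta\le C\ep$ and the bound follows for $\ep$ small.

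The step requiring real care, and the only genuine obstacle, is uniformity in $\beta$: Potter's bound above is applied to $\hat{\varphi}_{c-1}$ at the two arguments $1/(\beta\Delta)$ and $1/(\beta\Delta_0(\beta))$, and while the first is automatically large (from $\beta\Delta<\ep_5$), the second is large only if $\beta\Delta_0(\beta)$ is small, which is not uniform in $\beta$. One resolves this by splitting the range of $\beta$: for $\beta$ small, $\beta\Delta_0(\beta)\to 0$ and the computation above applies verbatim; for $\beta$ in a compact set, taking $\ep$ small makes $\Delta$, hence $\beta\Delta$, uniformly small so the bound is read off \eqref{Casymp} directly; and for $\beta$ large, $\beta\Delta<\ep_5$ forces $\Delta<\ep_5/\beta$, whence $\beta\delta^*(\Delta)/\Delta\ge\tfrac12\beta^2\inf_{0<t\le\ep_5}t^{\gamma-1}\hat{\varphi}_{c-1}(1/t)\to\infty$. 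It is this casework, rather than any single estimate, where the effort lies.
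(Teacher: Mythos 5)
Your argument for $3/2<c<2$ and for part (ii) is essentially the paper's own: it too compares $\beta\delta^*(\Delta)/\Delta$ at $\Delta$ and at $\Delta_0$ through the two-sided form \eqref{factorof2} of \eqref{Casymp} (your Potter bound plays the role of the paper's ``reduction in $\ep_6$'' absorbing the slowly varying ratio), and it handles uniformity in $\beta$ by the same split you describe: for small $\beta$ one has $\beta\Delta_0(\beta)\leq\beta^2$ small so the ratio argument applies, while for large $\beta$ the hypothesis $\beta\Delta<\ep_5$ is used directly in \eqref{factorof2}. Your treatment of the discontinuous case via \eqref{Casymp2} is likewise the paper's first case, which uses only $\delta^*(\Delta)\geq 1/E^X(\mE)$ and $\Delta_0(\beta)\leq\beta$ and in fact covers every $c$ with $E^X(\mE)<\infty$.

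The genuine gap is the case $c=2$ with $E^X(\mE)=\infty$, which part (i) includes. You dispose of it with ``the case $c=2$ is identical with $\gamma=0$,'' but \eqref{Casymp} is stated (and proved in \cite{Al08}) only for $c<2$, and \eqref{Casymp2} only for $c>2$; at $c=2$ with infinite mean neither is available, so there is no asymptotic of the form $\delta^*(\Delta)\asymp\hat{\varphi}(1/(\beta\Delta))$ for you to cite, and this is precisely where the paper does real additional work. Its third case estimates $1-M_E(-t)$ and $M_E'(-t)$ in terms of the truncated mean $\olm(1/t)$ to show that $1/\delta^*(\Delta)$ is comparable, up to bounded factors, to the conjugate slowly varying function $\left(\frac{1}{\olm}\right)^*\!\left(\frac{1}{\beta\Delta}\right)$ (equation \eqref{delta*c2}), and only then runs the same ratio argument as in the $3/2<c<2$ case. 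Your proposal needs either this computation or an explicit reference giving the $c=2$ analogue of \eqref{Casymp}; as written, that sub-case is asserted rather than proved. (The finite-mean possibility at $c=2$ is harmless: it folds into your discontinuous-transition argument, as it does into the paper's first case.)
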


For $3/2<c<2$, for small $\beta$ the condition $\Delta < \ep \Delta_0(\beta)$ will imply the condition $0<\beta\Delta<\ep_5$, while for large $\beta$ the reverse implication will hold.  In other words, for small $\beta$ the hypothesis is that $\Delta < \ep \Delta_0(\beta)$, and for large $\beta$ the hypothesis is that $\beta\Delta$ is small.

\begin{proof}[Proof of Lemma \ref{concavity2}]
{\it Case 1.} Suppose that $E^X(\mE) < \infty$, so that $c \geq 2$.  Then the transition is first order, with $1/E^X(\mE) \leq \delta^*(\Delta) \leq 1$ for all $\Delta>0$ (see \cite{Gi06}, Theorem 2.1), while $\Delta_0(\beta) \leq \beta$ for all $\beta>0$.  Hence
\[
  \frac{\delta^*(\Delta)\beta}{\Delta} \geq 
    \frac{\beta}{ E^X(\mE)\ep \Delta_0(\beta) } \geq \frac{1}{\ep E^X(\mE)},
  \]
and the result follows immediately, with $\ep_5=\infty$.

{\it Case 2.} Suppose $c<2$.  We can extend $\varphi$ from $\ZZ^+$ to $[1,\infty)$ by piecewise linearity; the result is still slowly varying.
Define $\overline{\varphi}(x) = 1/\varphi(x^{1/(c-1)})$ and let $\overline{\varphi}^*$ be a slowly varying function conjugate to $\overline{\varphi}$.  $\overline{\varphi}^*$ is characterized (up to asymptotic equivalence) by the fact that
\begin{equation} \label{conjugate}
  \overline{\varphi}^*\left( x\overline{\varphi}(x) \right) \sim \frac{1}{\overline{\varphi}(x)} \quad \text{as } x \to \infty;
  \end{equation}
see \cite{Se76}.  Then define $\hat{\varphi}(x) =\hp_{c-1}(x) = \overline{\varphi}^*(x)^{-1/(c-1)}$ and 
\[
  G_\beta(\Delta) = \frac{\delta^*(\Delta)\beta}{2\Delta} = \frac{\delta^*(\Delta)}{2\beta\Delta}\beta^2,
  \]
so $G_\beta(\Delta_0(\beta)) = 1$. From \cite{Al08} we have
\[
  \frac{\delta^*(\Delta)}{\beta\Delta} \sim K_2 (\beta\Delta)^{-(2c-3)/(c-1)} 
    \hat{\varphi}\left( \frac{1}{\beta\Delta} \right)  \quad \text{as } \beta\Delta \to 0,
    \]
so there exists $\ep_6$ such that $\beta\Delta < \ep_6$ implies
\begin{equation} \label{factorof2}
  \half K_2 (\beta\Delta)^{-(2c-3)/(c-1)} \hat{\varphi}\left( \frac{1}{\beta\Delta} \right) \leq 
    \frac{\delta^*(\Delta)}{\beta\Delta} 
    \leq 2 K_2 (\beta\Delta)^{-(2c-3)/(c-1)} \hat{\varphi}\left( \frac{1}{\beta\Delta} \right).
    \end{equation}
Under the assumptions in (ii) the exponent in \eqref{factorof2} is 0, and we have $\varphi(x) \to 0$ as $x \to \infty$ so $\overline{\varphi}(x) \to \infty$, so $\overline{\varphi}^*(x) \to 0$, so $\hat{\varphi}(x) \to \infty$.  Therefore $G_\beta(\Delta) \to \infty$ as $\Delta \to 0$ and (ii) is proved. 

  Thus suppose $3/2 < c<2$.  Since $\Delta_0(\beta) \leq \beta$, there exists $\beta_0$ such that $\beta<\beta_0$ implies $\beta\Delta_0(\beta) < \ep_6$.  Then for $\beta < \beta_0$ and $\Delta < \Delta_0(\beta)$, by \eqref{factorof2}
\begin{equation} \label{gratio}
  G_\beta(\Delta) = \frac{G_\beta(\Delta)}{G_\beta(\Delta_0)} \geq \frac{1}{4} \left( \frac{\beta\Delta}{\beta\Delta_0}
    \right)^{-(2c-3)/(c-1)} \frac{ \hat{\varphi}\left( \frac{1}{\beta\Delta} \right) }
    { \hat{\varphi}\left( \frac{1}{\beta\Delta_0} \right) }.
    \end{equation}
With a reduction in $\ep_6$ if necessary, we then have 
\[
  G_\beta(\Delta) \geq \left( \frac{\beta\Delta}{\beta\Delta_0}
    \right)^{-(2c-3)/2(c-1)},
    \]
which exceeds $K$ for small $\Delta/\Delta_0$, proving (ii) for $\beta < \beta_0$.  For $\beta \geq \beta_0$ we can use \eqref{factorof2} to conclude that if $\beta\Delta$ is less than some $\ep_7$ then we have
\[
  G_\beta(\Delta) \geq \beta_0^2 \frac{\delta^*(\Delta)}{2\beta\Delta} \geq 
    \frac{1}{4} \beta_0^2 K_2 (\beta\Delta)^{-(2c-3)/(c-1)} \hat{\varphi}\left( \frac{1}{\beta\Delta} \right) \geq K,
  \]
proving (i) for $\beta \geq \beta_0$.

{\it Case 3.} It remains to consider $c=2$ with $E^X(\mE) = \infty$.  Here to obtain a substitute for \eqref{factorof2} we need to consider the asymptotics of $\delta^*(\Delta)$ as $\beta\Delta \to 0$.  First observe that for fixed $a>1$, for large $n$,
\begin{equation} \label{phivsm}
  \varphi(n) \log a \leq \varphi(n) \sum_{\frac{n}{a}-1 \leq k \leq n-1} \frac{1}{k}
    \leq 2\sum_{\frac{n}{2}-1 \leq k \leq n-1} \frac{\varphi(k)}{k} \leq 2\olm(n),
\end{equation}
so that for sufficiently large $s$ we have
\begin{equation} \label{varphiolm}
  \varphi(s) \leq \olm(s),
  \end{equation}
and hence for small $t$,
\begin{align} \label{Masymp}
1 - M_E(-t)
 &= \sum_{k=1}^\infty (1 - e^{-tk})\frac{\varphi(k)}{k^2} \\
 &\geq \frac{t}{2} \sum_{1 \leq k \leq 1/t} \frac{\varphi(k)}{k}\notag  \\
 &=\half t \olm\left( \frac{1}{t} \right) \notag
\end{align}
and
\begin{align} \label{Masymp2}
 1 - M_E(-t)
  &\leq t \sum_{1 \leq k \leq 1/t} \frac{\varphi(k)}{k} + \sum_{k>1/t}  \frac{\varphi(k)}{k^2} \\
  &\leq t\olm\left( \frac{1}{t} \right) + 2t\varphi\left( \frac{1}{t} \right) \notag \\
  &\leq 3t\olm\left( \frac{1}{t} \right). \notag
  \end{align}
Let $\alpha_\pm = \alpha_\pm(\beta\Delta)$ be given by
\[
  1 - \half \alpha_+\olm\left( \frac{1}{\alpha_+} \right) = e^{-\beta\Delta}, 
    \quad 1 - 3\alpha_- \olm\left( \frac{1}{\alpha_-} \right) = e^{-\beta\Delta},
  \]
so that $\alpha_- \leq \alpha_0 \leq \alpha_+$ for small $\beta\Delta$, by \eqref{Masymp} and \eqref{Masymp2}.  As $\beta\Delta \to 0$ we have
\begin{equation} \label{alphambar}
  \half \alpha_+\olm\left( \frac{1}{\alpha_+} \right) \sim \beta\Delta \quad \text{or} \quad 
    \frac{1}{\alpha_+} \frac{1}{\olm\left( \frac{1}{\alpha_+} \right)} \sim \frac{1}{2\beta\Delta},
  \end{equation}
and hence 
\[
  \frac{2\beta\Delta}{\olm\left( \frac{1}{\alpha_+} \right)} \sim
    \alpha_+(\beta\Delta) \sim \frac{2\beta\Delta}
    { \left( \frac{1}{\olm} \right)^*\left( \frac{1}{\beta\Delta} \right) },
    \]
(the second equivalence being a consequence of \eqref{alphambar} and the definition \eqref{conjugate}), and then
\begin{equation} \label{olmasymp}
  \olm\left( \frac{1}{\alpha_+} \right) \sim \left( \frac{1}{\olm} \right)^*\left( \frac{1}{\beta\Delta} \right).
  \end{equation}
The same holds similarly for $\alpha_-$ in place of $\alpha_+$, and hence also for $\alpha_0$.
Also, as $\beta\Delta \to 0$,
\[
  \frac{1}{\delta^*(\Delta)} = (\log M_E)'(-\alpha_0) \sim M_E'(-\alpha_0)
    = \sum_{k=1}^\infty e^{-\alpha_0 k}\frac{\varphi(k)}{k},
    \]
which analogously to \eqref{Masymp} leads to
\[
  \frac{1}{3}  \olm\left( \frac{1}{\alpha_0} \right) \leq \frac{1}{\delta^*(\Delta)}
    \leq \olm\left( \frac{1}{\alpha_0} \right) + \varphi\left( \frac{1}{\alpha_0} \right)
    \leq 2\olm\left( \frac{1}{\alpha_0} \right),
    \]
where the last inequality follows from \eqref{varphiolm}.  This and \eqref{olmasymp} (with $\alpha_0$ in place of $\alpha_+$) show that for small $\beta\Delta$,
\begin{equation} \label{delta*c2}
  \frac{1}{4}   \left( \frac{1}{\olm} \right)^*\left( \frac{1}{\beta\Delta} \right) \leq \frac{1}{\delta^*(\Delta)}
    \leq 3 \left( \frac{1}{\olm} \right)^*\left( \frac{1}{\beta\Delta} \right).
    \end{equation}
Since $\olm$ is slowly varying, so is $(1/\olm)^*$, so we can use \eqref{delta*c2} in place of \eqref{factorof2} to prove (i) for $c=2$ with $E^X[\mE] = \infty$ in the same manner as we did for $3/2<c<2$.  
\end{proof}

Recall
that $R(\Delta)$ and $M(\Delta)$ are defined in \eqref{Rdelta}
 and \eqref{M_def}. Note that
\begin{eqnarray}\label{RvsM}
  \frac{R(\Delta)}{M(\Delta)}= \frac{\beta\Delta\delta^*(\Delta)}{\beta\Delta\delta_2}
   = \frac{1}{\ep_2}>1.
\end{eqnarray}
Let $\frac{2}{3}<\lambda<1$ and let $K_3>2$ to be specified (see Lemma \ref{Ylambdabound}.)
For fixed $\ep_2$, we take $\ep_3$ small enough so that
\begin{eqnarray}\label{h1}
  h_1=\frac{4\ep_3}{\ep_2} < \half.
\end{eqnarray}
By \eqref{RvsM} a CG block is at most $h_1/4$ fraction of a long occupied segment:
\begin{equation}\label{RvsM2}
  \ep_3 R(\Delta) = \frac{h_1}{4} M(\Delta).
  \end{equation}
Let $K_4$ satisfy
\begin{eqnarray}\label{b}
  \frac{4}{K_4\ep_2} < \frac{1}{4}
\end{eqnarray}
and 
\begin{equation} \label{K2bound2}
  \frac{1}{8}(1-\lambda)\left( \lambda - \half \right)\ep_3\ep_2 K_4 \geq K_3.
  \end{equation}
By Lemma \ref{concavity2}, for sufficiently small $\ep_0$ and $\beta\Delta$, for $\Delta < \ep_0\Delta_0(\beta)$ we have 
\begin{equation} \label{K2bound}
   \delta^*(\Delta) \geq K_4 \frac{\Delta}{\beta}.
 \end{equation}
For a lifted CG skeleton $\hmJs$ we define 
\begin{eqnarray}\label{psi_lambda}
  \psi_\lambda(\hmJs,v,\delta) = \lambda\beta(\Delta + v)\delta |\,\hmJs\,| - \half\beta^2\delta^2 |\,\hmJs\,|,
\end{eqnarray}
\begin{eqnarray}\label{g}
  g(\hmJs,\delta) = \frac{3}{4}\beta^2\delta^2 |\, \hmJs\,| - \log P^X(\mW^*(\hmJs,\delta)).
\end{eqnarray}
Observe that if $\beta(\Delta + v)\delta |\,\hmJs\,| \leq g(\hmJs,\delta)$ then $\psi_1(\hmJs,v,\delta) \leq \psi_\lambda(\hmJs,v,\delta) + (1-\lambda)g(\hmJs,\delta)$.

\begin{lemma}\label{J,J^*}
Let $\hmJ = [0,N]\setminus\cup_{i=1}^m(a_i,b_i)$ be a lifted skeleton, with $m \geq 2$. Then  
\begin{eqnarray*}
  |\,\hmJ\,| \leq |\,\mL^*(\hmJ)\,| \leq  
  (1+ h_1)\,|\,\hmJ\,|\,.
\end{eqnarray*} 
\end{lemma}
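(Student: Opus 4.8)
The plan is to read off both inequalities directly from the explicit form of $\mL^*(\hmJ)$ and then reduce the second one to an arithmetic fact about $m$. Write $\hmJ = [0,N]\setminus\cup_{i=1}^m(a_i,b_i)$, so that by definition $\mL^*(\hmJ) = [0,N]\setminus\cup_{i=1}^m(a_i^*,b_i^*)$, where $a_i^*$ is the smallest CG point exceeding $a_i$ and $b_i^*$ the largest CG point below $b_i$. Since consecutive CG points are a distance $\ep_3 R(\Delta)$ apart, we have $0 \le a_i^* - a_i \le \ep_3 R(\Delta)$ and $0 \le b_i - b_i^* \le \ep_3 R(\Delta)$ for each $i$; this is the only quantitative input about the coarse graining that I will need.

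For the left inequality: because $a_i^* \ge a_i$ and $b_i^* \le b_i$, each removed open interval only shrinks, i.e. $(a_i^*,b_i^*)\subseteq(a_i,b_i)$, hence $\hmJ \subseteq \mL^*(\hmJ)$ and $|\hmJ| \le |\mL^*(\hmJ)|$. For the right inequality, the sites of $\mL^*(\hmJ)$ that are not already in $\hmJ$ lie in $\cup_{i=1}^m\big((a_i,a_i^*]\cup[b_i^*,b_i)\big)$, a union of $2m$ integer intervals with at most $\ep_3 R(\Delta)$ sites each, so
\[
  |\mL^*(\hmJ)| \le |\hmJ| + 2m\,\ep_3 R(\Delta).
\]
Now the hypotheses enter: since $\hmJ$ is a lifted skeleton with $m\ge2$, it has $m-1\ge1$ central occupied segments, and each of them is long, hence of length $>M(\Delta)$; therefore $|\hmJ| \ge (m-1)M(\Delta)$. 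Using \eqref{RvsM2}, namely $\ep_3 R(\Delta) = \tfrac{h_1}{4}M(\Delta)$, we get $2m\,\ep_3 R(\Delta) = \tfrac{m}{2}h_1 M(\Delta)$, and since $\tfrac{m}{2}\le m-1$ exactly when $m\ge2$, this is at most $h_1(m-1)M(\Delta) \le h_1|\hmJ|$. Combining with the previous display yields $|\mL^*(\hmJ)| \le (1+h_1)|\hmJ|$.

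I do not expect a genuine obstacle here; the proof is essentially bookkeeping. The only points requiring a little care are being consistent about "number of sites in a set" versus "length" (difference of endpoints) when estimating the added sites and the lengths of central segments, and handling the edge case in which $a_i$ or $b_i$ is itself a CG point (so that $a_i^*-a_i$ may equal $\ep_3 R(\Delta)$ rather than being strictly smaller) — but since all we use is $a_i^*-a_i\le\ep_3 R(\Delta)$ and $b_i-b_i^*\le\ep_3 R(\Delta)$, none of this matters. The conceptual content is just that the lifted property forces $|\hmJ|\ge(m-1)M(\Delta)$, which together with $m/2\le m-1$ for $m\ge2$ is precisely what makes the hypothesis $m\ge2$ necessary and sufficient.
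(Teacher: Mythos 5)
Your proof is correct and is essentially the paper's argument: both bound the sites added by coarse-graining by a multiple of $\ep_3 R(\Delta)=\tfrac{h_1}{4}M(\Delta)$ and compare against $|\hmJ|\ge (m-1)M(\Delta)$ from the long central segments, with $m\ge 2$ supplying the needed arithmetic (the paper allocates at most $4\ep_3 R(\Delta)$ per central segment, you aggregate $2m\,\ep_3 R(\Delta)$ and use $2m\le 4(m-1)$ — the same computation).
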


\begin{proof}
The first inequality is clear. Regarding the second one we have 
\begin{eqnarray}\label{J,J*}
  |\,\mL^*(\hmJ)\,|&=& \sum_{i=1}^{m+1} (a_i^*- b_{i-1}^*) \nonumber \\
  &\leq& (a_1 - b_0) + (a_{m+1} - b_m) + 2\ep_3 R(\Delta) 
    + \sum_{i=2}^m \big( (a_i - b_{i-1})+2\ep_3 R(\Delta) \big) \nonumber\\
  &\leq& (a_1 - b_0) + (a_{m+1} - b_m)  
    + \sum_{i=2}^m \big( (a_i - b_{i-1})+4\ep_3 R(\Delta) \big) \\
  &\leq& 
    \left(1+ \frac{4\ep_3R(\Delta)}{M(\Delta)}\right)\,\sum_{i=1}^{m+1}( a_i - b_{i-1}) \nonumber \\
  &=& \left( 1 + \frac{4\ep_3}{\ep_2 } \right) |\hmJ|, \nonumber
\end{eqnarray}
where the last equality follows from \eqref{RvsM}.
\end{proof}

The next lemma gives a uniform lower bound for the size of a set $T_N$ of disorders in which the averages over skeletons are uniformly well-controlled.

\begin{lemma}\label{TN}
There exists $\rho = \rho(\ep_3)>0$ as follows.  For the event
\begin{eqnarray*}
   T_N = \bigcap_{\hmJ^*}\,\, \bigcap_{\delta \geq (1-h_1)\delta_2}
  \left\{ (\,V_i\,)_{i\leq N}: \beta\delta(\Delta + \olV^{\hmJs})  |\,\hmJs\,| 
  \leq g(\hmJs,\delta)\right\}.
\end{eqnarray*}
we have $P^V(T_N)\geq \rho$ for all large $N$.  Here the second intersection is over $\delta$ compatible with $\hmJs$.
\end{lemma}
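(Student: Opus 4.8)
\emph{Plan.} The proof will be a first-moment (union) bound on the complement. Writing the intersection in the definition of $T_N$ out,
\[
  T_N^c = \bigcup_{\hmJs}\ \bigcup_{\delta}\ B(\hmJs,\delta), \qquad
  B(\hmJs,\delta) = \Big\{ \beta\delta(\Delta + \olV^{\hmJs})|\,\hmJs\,| > g(\hmJs,\delta) \Big\},
\]
the unions running over lifted CG skeletons $\hmJs$ and over compatible $\delta \geq (1-h_1)\delta_2$, and the goal is to show $\sum_{\hmJs}\sum_\delta P^V(B(\hmJs,\delta)) < \tfrac12$ for all large $N$, which gives the lemma with $\rho = \tfrac12$. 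Since $\beta\delta|\hmJs| > 0$, recalling \eqref{g} we have $B(\hmJs,\delta) = \{\olV^{\hmJs} > \theta(\hmJs,\delta)\}$ where
\[
  \theta(\hmJs,\delta) = \tfrac34\beta\delta - \Delta - \frac{\log P^X(\mW^*(\hmJs,\delta))}{\beta\delta|\,\hmJs\,|},
\]
and $\olV^{\hmJs}$ is a centered Gaussian with variance $1/|\hmJs|$.

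First I would check that the hypotheses force $\theta$ to be comfortably positive. By \eqref{K2bound} (that is, Lemma \ref{concavity2}) we have $\delta^*(\Delta) \geq K_4\Delta/\beta$, so every compatible $\delta \geq (1-h_1)\delta_2 = (1-h_1)\ep_2\delta^*(\Delta)$ satisfies $\beta\delta \geq (1-h_1)\ep_2 K_4\Delta$; with \eqref{h1} (so $1-h_1 > \tfrac12$) and \eqref{b} (so $K_4\ep_2 > 16$) this yields $\Delta < \tfrac18\beta\delta$, hence, setting $p = -\log P^X(\mW^*(\hmJs,\delta)) \geq 0$,
\[
  \theta(\hmJs,\delta) \geq \tfrac58\beta\delta + \frac{p}{\beta\delta|\,\hmJs\,|} > 0 .
\]
Then the Gaussian tail bound $P^V(\olV^{\hmJs} > \theta) \leq e^{-\theta^2|\hmJs|/2}$ applies, and writing $s = \beta^2\delta^2|\hmJs|$ and expanding the square,
\[
  \tfrac12\theta^2|\,\hmJs\,| \geq \tfrac12|\,\hmJs\,|\Big(\tfrac58\beta\delta + \tfrac{p}{\beta\delta|\hmJs|}\Big)^2
  = \tfrac{25}{128}s + \tfrac58 p + \tfrac{p^2}{2s}
  = \tfrac18 s + p + \tfrac{1}{2s}\big(p - \tfrac38 s\big)^2 \geq \tfrac18 s + p,
\]
so that $P^V(B(\hmJs,\delta)) \leq e^{-\frac18\beta^2\delta^2|\hmJs|}\,P^X(\mW^*(\hmJs,\delta))$.

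The final step is the summation. Using $\delta \geq (1-h_1)\delta_2$, the bound $|\hmJs| \geq w(\hmJs)\,\ep_3 R(\Delta)$, the identity \eqref{RvsM} (equivalently \eqref{Rdelta}--\eqref{M_def}), and \eqref{K2bound}--\eqref{K2bound2}, one gets $\tfrac18\beta^2\delta^2|\hmJs| \geq K_5\,w(\hmJs)$ for a large constant $K_5$ (any $K_5 > \log 2$ works; \eqref{K2bound2} makes $\ep_2\ep_3 K_4$ large enough). Summing first over $\delta$ via $\sum_\delta P^X(\mW^*(\hmJs,\delta)) \leq P^X(\mS^*(\bx) = \hmJs)$, and then over $\hmJs$ grouped by $m = m(\hmJs)$: the $m=0$ term is at most $e^{-K_5 N/(\ep_3 R(\Delta))} \to 0$ as $N \to \infty$; for $m \geq 1$ every lifted CG skeleton has $w(\hmJs) \geq 8m - 6$ (each of its $m-1$ central occupied segments is long, hence by \eqref{RvsM2} and \eqref{h1} contains at least $8$ CG blocks, and the initial and final occupied segments contain at least one each), while $\sum_{m(\hmJs)=m} P^X(\mS^*(\bx)=\hmJs) \leq 1$ since those events are disjoint; hence the $m \geq 1$ part of the sum is at most $\sum_{m\geq1} e^{-K_5(8m-6)} < \tfrac14$ for $K_5$ large. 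Thus $P^V(T_N^c) < \tfrac12$ for all large $N$.

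I expect the main obstacle to be exactly the calibration done above: the single Gaussian estimate $e^{-\theta^2|\hmJs|/2}$ must be apportioned so as to yield \emph{both} (a) the full first power of $P^X(\mW^*(\hmJs,\delta))$ — this is essential, because after summing over $\delta$ and over $\hmJs$ (equivalently, over the excursion lengths that create the gaps) the sum has to collapse to $O(1)$, and $\sum_n \varphi(n)n^{-c}$ converges only for $c > 1$, so none of that power can be sacrificed when $c$ is close to $3/2$ — and (b) residual exponential decay of the form $e^{-K_5 w(\hmJs)}$ strong enough to dominate the combinatorial entropy of skeleton shapes. Both are available only because the coefficient $\tfrac34\beta\delta$ of the quadratic term in $g$ exceeds $\Delta$ by a fixed fraction over the relevant range of $\delta$, which is precisely the content of Lemma \ref{concavity2}, i.e. of the hypotheses $c>3/2$ (resp. $c=3/2$ with $\varphi(n)\to0$). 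Beyond this point the argument is routine bookkeeping with the coarse‑grained skeleton definitions.
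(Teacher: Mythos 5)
Your proof is correct and follows essentially the same route as the paper's: a first-moment (union) bound over pairs $(\hmJs,\delta)$, a Gaussian exponential estimate on $\olV^{\hmJs}$ that extracts the full first power of $P^X(\mW^*(\hmJs,\delta))$ from $g$, and a final summation using disjointness of the events $\mW^*(\hmJs,\delta)$. The only real difference is calibration: the paper applies Chebyshev with the fixed tilt $\beta\delta|\hmJs|$, so the quadratic term in $g$ leaves a surplus $\beta\Delta\delta|\hmJs|-\tfrac14\beta^2\delta^2|\hmJs|\leq-\ep_3$ and each term is damped only by the constant $e^{-\ep_3}$, after which one sums all probabilities at once and takes $\rho=1-e^{-\ep_3}$; you instead use the optimal Gaussian tail (your completion of the square), retaining decay $e^{-\beta^2\delta^2|\hmJs|/8}\leq e^{-K w(\hmJs)}$ with $K$ large via \eqref{K2bound2}, which costs the extra bookkeeping (grouping skeletons by $m(\hmJs)$, the separate $m=0$ term, the count $w\geq 8m-6$) but buys the uniform constant $\rho=\tfrac12$ rather than one depending on $\ep_3$ — either conclusion suffices for the lemma.
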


\begin{proof}
By \eqref{K2bound} and Lemma \ref{J,J^*}, for $\delta \geq (1-h_1)\delta_2 \geq \half\ep_2\delta^*(\Delta)$ we have
\begin{equation} \label{Deltavs}
  \beta\delta \geq \half \ep_2\beta\delta^*(\Delta) \geq \half K_4 \ep_2 \Delta,
  \end{equation}
while $|\hmJs| \geq 2\ep_3 R(\Delta)$ for all $\hmJs$.  Hence
by Chebyshev's inequality and \eqref{b} we have 
\begin{align}
  P^V&\left( \beta\delta(\Delta + \olV^{\hmJs} ) |\,\hmJs\,| > g(\hmJs,\delta) \right) \\
  &\leq
    e^{ \beta\delta\Delta |\,\hmJs\,|-g(\hmJs,\delta) }\,\,E^V\left[ e^{\beta\delta\olV^{\hmJs}  |\,\hmJs\,| } \right]
    \notag \\
  &= \exp\left(\beta\delta\Delta |\,\hmJs\,| - \frac{1}{4}\beta^2\delta^2 |\hmJs|\right) 
    P^X(\mW^*(\hmJs,\delta)) \notag \\
  &\leq \exp\left(\left(1 - \frac{1}{8}\ep_2K_4\right)\beta\Delta\delta|\hmJs|\right) 
    P^X(\mW^*(\hmJs,\delta)) \notag \\
  &\leq \exp\left( - 2 \beta\Delta\delta \ep_3R(\Delta)\right) 
    P^X(\mW^*(\hmJs,\delta)) \notag \\
  &\leq \exp(-\ep_3) P^X(\mW^*(\hmJs,\delta)). \notag
\end{align}
We now sum over $\hmJs$ and $\delta$ and take $\rho = 1 - e^{-\ep_3}$.
\end{proof}

The next step is to separate the contribution to the partition function from the short segments of the skeletons from that of the long segments. Before doing this we need some more definitions.  We use $\bx_{[a,b]}$ to denote a generic path $(x_i)_{a \leq i \leq b}$.  When confusion is unlikely, given a path $\bx = \bx_{[0,N]}$, we also let $\bx_{[a,b]}$ denote the segment of $\bx$ from $a$ to $b$.

\begin{definition} \label{QJ}
For $b-a\geq R(\Delta)$, we will denote by $\mQ_{[a,b]}$ the set of all paths $\bx_{[a,b]}$ such that
\vskip 2mm
(i) $x_{a}=x_b=0$.
\vskip 1mm
(ii) The excursion starting from $a$ and the excursion ending at $b$ (which may be the same excursion) are long.
\vskip 1mm
(iii) All the occupied segments are short.
\vskip 2mm
\noindent
The normalized partition function over the set $\mQ_{[a,b]}$ is
\begin{eqnarray*}
Q_{[a,b]}=\frac{1}{p_{b-a}} Z_{[a,b]}(\,\mQ_{[a,b]}\,).
\end{eqnarray*}
For a lifted skeleton $\hmJ = [0,N] \bs \bigcup_{i=1}^m (a_i,b_i)$ we define
\[
  Q(\hmJ) = \prod_{i=1}^m Q_{[a_i,b_i]},
  \]
which can be viewed as a factor in the total contribution to the overall partition function from skeletons $\mJ$ with $\hmL(\mJ) = \hmJ$.  Finally we let $\mY_{[a,b),r}$ denote the set of paths $\bx$ satisfying $x_a = 0$ and having no excursions longer than $r$ and starting in $[a,b]$, and $\mY_{[a,b),r}^0 = \mY_{[a,b),r} \cap \{x_b=0\}$.  We abbreviate $\mY_{[0,n),R}$ as $\mY_{n,R}$, and $\mY_{[0,n),R}^0$ as $\mY_{n,R}^0$.  If $[a,b]$ is an occupied segment in a path $\bx$, then necessarily $\bx \in \mY_{[a,b)}^0$.  
\end{definition}

Observe that 
\begin{align} \label{ZND2}
  Z_N^0(\mD(\delta_2)) &= \sum_{\hmJ} \sum_{\{\mJ: \hmL(\mJ) = \hmJ\}} Z_N^0(\mW_+(\mJ,\delta_2)) \notag \\
  &= \sum_{\hmJ} Z_N^0(\mW_+(\hmJ,\delta_2)) Q(\hmJ) \notag \\
  &\leq \sum_{\hmJ} \sum_{\delta\geq(1-h_1)\delta_2} Q(\hmJ)
    Z_N^0(\hmW^*(\hmJ,\delta) \mid \hmW^*(\hmJ,\delta)) P^X(\hmW^*(\hmJ,\delta)),
 \end{align}
where the last sum is over $\delta$ compatible with $\mL^*(\hmJ)$.
To control the growth of \eqref{ZND2}, we will need some estimates for quantities related to those appearing on the right side.  Let us start with $P^X(\mW^*(\hmJs))$.  Define
\[
  I_k^* = [(k-1)\ep_3R(\Delta),k\ep_3 R(\Delta)].
  \]
\begin{proposition}\label{P(J^*)}
Let $\ep_8>0$.  Then there exists $K_5$ such that, provided $\beta\Delta$ is sufficiently small (depending on $\ep_3,\ep_8$), for $\hmJs = [0,N] \bs \bigcup_{i-1}^m (a_i^*,b_i^*)$ a lifted CG skeleton, for the positive integers $k_i,\ell_i$ given by $a_i^* = k_i \ep_3 R(\Delta)$, $b_i^* = \ell_i \ep_3 R(\Delta)$, we have
\begin{equation} \label{produpper}
  P^X(\mW^*(\hmJs)) \leq \prod_{i=1}^m  \frac{K_5}{(\ell_i-k_i)^{(1-\ep_8)c}}.
  \end{equation}
\end{proposition}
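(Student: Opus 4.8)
The plan is to decompose $\mW^*(\hmJs)$ along the collection of long excursions of $\bx$ forced by $\hmJs$, estimate each gap's contribution via the renewal structure of the returns to $0$, and absorb the leftover factors using the asymptotics of $\olm$ together with Potter's bound on $\varphi$; write $R=R(\Delta)$. First I would carry out the combinatorial reduction. Let $(c_1,d_1),(c_2,d_2),\dots$ be the excursions of $\bx$ of length at least $R$ (the \emph{long} ones), so that $\mS^*(\bx)=\mL^*(\mS(\bx))=[0,N]\setminus\bigcup_j(c_j^*,d_j^*)$. Since each long excursion has length $\ge R$ and $\ep_3$ is small, the intervals $(c_j^*,d_j^*)$ are nondegenerate and strictly separated, as are the gaps of the lifted CG skeleton $\hmJs=[0,N]\setminus\bigcup_{i=1}^m(a_i^*,b_i^*)$, where $a_i^*=k_i\ep_3 R$ and $b_i^*=\ell_i\ep_3 R$. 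Hence $\bx\in\mW^*(\hmJs)$ forces $\bx$ to have exactly $m$ long excursions, with $c_i\in I_{k_i}^*$, $d_i\in I_{\ell_i+1}^*$, and only short excursions on each of $[0,c_1],[d_1,c_2],\dots,[d_m,N]$. Applying the Markov property at the return times (the excursions being i.i.d.) then gives
\[
  P^X(\mW^*(\hmJs))\ \le\ \sum_{\substack{c_i\in I_{k_i}^*,\ d_i\in I_{\ell_i+1}^*\\ 1\le i\le m}}\ \Big(\prod_{i=1}^m P^X(\mE=d_i-c_i)\Big)\Big(\prod_{i=1}^m r_{\,c_i-d_{i-1}}\Big),
\]
where $d_0:=0$, the contribution of the last occupied segment has been bounded by $1$, and $r_n$ is the probability that $x_n=0$ with no long excursion in $[0,n]$.

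The core is a per-gap bound uniform in the location $d_{i-1}$ of the preceding gap. Since $(r_n)$ is at most the renewal mass function of the renewal process obtained by conditioning each excursion on $\{\mE<R\}$ — whose steps are $\le R$ and whose mean is $\ge\half\olm(R)$ for small $\beta\Delta$ — Wald's identity applied at the first renewal past a given level gives $\sum_{n=a}^{a+\ep_3 R}r_n\le C R/\olm(R)$ for all $a$, once $\beta\Delta$ is small enough that $R/\olm(R)\ge1$. On the other hand, as $c_i,d_i$ range over $I_{k_i}^*,I_{\ell_i+1}^*$, the difference $d_i-c_i$ lies in $(n_i,n_i+2\ep_3 R]$ with $n_i:=(\ell_i-k_i)\ep_3 R\ge(1-2\ep_3)R$, and since $n_i\to\infty$ as $\beta\Delta\to0$, the uniform convergence theorem for slowly varying functions gives $\sum_{d_i\in I_{\ell_i+1}^*}P^X(\mE=d_i-c_i)\le 4\ep_3 R\,P^X(\mE=n_i)$ uniformly in $c_i$. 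Combining,
\[
  \sum_{c_i\in I_{k_i}^*}\sum_{d_i\in I_{\ell_i+1}^*}r_{\,c_i-d_{i-1}}\,P^X(\mE=d_i-c_i)\ \le\ \frac{C R}{\olm(R)}\cdot 4\ep_3 R\cdot\frac{\varphi(n_i)}{n_i^{\,c}}\ =\ \frac{C'R^{2-c}\varphi(n_i)}{\olm(R)}\cdot\frac{1}{(\ell_i-k_i)^c}
\]
(uniformly in $d_{i-1}$). Peeling these off one gap at a time, from $i=m$ down to $i=1$, gives $P^X(\mW^*(\hmJs))\le\prod_i C'R^{2-c}\varphi(n_i)\,\olm(R)^{-1}(\ell_i-k_i)^{-c}$; finally $R^{2-c}\varphi(n_i)/\olm(R)\le K_5(\ell_i-k_i)^{\ep_8 c}$ for small $\beta\Delta$, because for $c<2$ one has $\olm(R)\sim\varphi(R)R^{2-c}/(2-c)$ (Karamata) so this ratio is $\asymp\varphi(n_i)/\varphi(R)\le C_{\ep_8}(n_i/R)^{\ep_8 c}$ (Potter), for $c=2$ with $E^X(\mE)=\infty$ one uses $\varphi(R)\le\olm(R)$ from \eqref{varphiolm}, and for $c\ge2$ with $E^X(\mE)<\infty$ one uses $\olm(R)\to E^X(\mE)\in(0,\infty)$ with $R^{2-c}\varphi(n_i)\to0$. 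This is \eqref{produpper}, with $K_5$ depending only on $c,\varphi,\ep_3,\ep_8$.

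I expect the combinatorial reduction to require the most care: verifying that $\mW^*(\hmJs)$ genuinely forces exactly $m$ long excursions positioned in the prescribed coarse blocks — in particular that $\mL^*$ never merges two distinct gaps, which uses that a \emph{lifted} CG skeleton has strictly separated gaps — along with the bookkeeping for the initial and final occupied segments. The renewal uniformity bound itself is routine once one dominates by the associated non-defective renewal process.
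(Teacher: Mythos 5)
Your proof follows the same skeleton as the paper's: sum over the admissible starting and ending points of the $m$ long excursions inside the prescribed CG blocks, factor over the gaps via the renewal structure, bound the sum over starting points in a block of length $\ep_3 R$ by an expected number of returns, and trade the slowly varying factor $\varphi(n_i)$ for the loss $\ep_8 c$ in the exponent by a Potter/uniform-convergence bound. Where you genuinely differ is the window estimate: you dominate the short-excursion return probabilities $r_n$ by the renewal mass function of the length-truncated excursion law and apply Wald, getting $\sum_{n=a}^{a+\ep_3 R} r_n \leq C R/\olm(R)$, whereas the paper bounds the expected local time $E^X(L_{\ep_3 R}\mid \mY_{\ep_3 R,R})$ by the geometric-trials estimate $1/P^X(\mE>\ep_3 R\mid \mE\leq R)\leq K_6(\ep_3 R)^{c-1}/\varphi(\ep_3 R)$ (see \eqref{mean_est}), which cancels exactly against the long-excursion weight $\varphi(b_i-a_i)(b_i-a_i)^{-c}$ and so needs no case analysis in $c$ and no appeal to $\olm$. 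Your route is correct, with two caveats. First, because your normalization is $\olm(R)$ rather than $\varphi(\ep_3 R)(\ep_3 R)^{1-c}$, your $K_5$ acquires a factor of order $\ep_3^{-(c-1)}$ (for $c<2$); this departs from the paper's convention that the $K_i$ depend only on $c$ and $\varphi$, but it is harmless where \eqref{produpper} is used, since in Lemma \ref{BoundOnZNL} the constant $K_3$, and hence $K_4$ and $\ep_0$, are chosen after $\ep_3$, so no circularity arises. Second, in the case $c\geq 2$ with $E^X(\mE)<\infty$ your phrase ``$R^{2-c}\varphi(n_i)\to 0$'' is not uniform in $\ell_i-k_i$ (since $n_i=(\ell_i-k_i)\ep_3 R$ is unbounded); you should apply the same Potter bound you use for $c<2$, writing $\varphi(n_i)\leq C_{\ep_8}\varphi(R)(\ell_i-k_i)^{\ep_8}$ and then using $R^{2-c}\varphi(R)\to 0$ (which for $c=2$ with finite mean follows from \eqref{varphiolm}-type estimates forcing $\varphi(R)\to 0$) together with $(\ell_i-k_i)^{\ep_8}\leq(\ell_i-k_i)^{\ep_8 c}$ --- a one-line repair with tools you already invoke.
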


\begin{proof}
Write $R$ for $R(\Delta)$.  We sum over the starting and ending points for
the long excursions, within the CG blocks:
\begin{eqnarray} \label{Jstarbound}
  P^X\left( \mW^*(\hmJs) \right)&=&\sum_{\hmJ: \mL^*(\hmJ)=\hmJs} P^X(\mW(\hmJ))  \nonumber \\
  &\leq& \sum_{(a_i\in I^*_{k_i},\,b_i\in I^*_{\ell_i+1})_{i\leq m}}
    \prod_{i=1}^{m+1} P^X(\mY_{[b_{i-1},a_{i}),R}^0\,
    \big|\,x_{b_{i-1}}=0)\, \prod_{i=1}^m p_{b_i-a_i} \nonumber\\
  &\leq& \sum_{(a_i\in I^*_{k_i},\,b_i\in I^*_{\ell_i+1})_{i\leq m}}
    \prod_{i=1}^{m} P^X( x_{a_i}=0 \mid \mY_{[b_{i-1},a_{i}),R} )\, p_{b_i-a_i} 
    \nonumber \\
  &=& \sum_{(a_i\in I^*_{k_i},\,b_i\in I^*_{\ell_i+1})_{i\leq m}} \prod_{i=1}^m
    P^X( x_{a_i}=0 \mid \mY_{[b_{i-1},a_{i}),R} )
    \frac{\varphi(b_i-a_i)}{(b_i-a_i)^c}.
\end{eqnarray}
We have $(\ell_i-k_i)\ep_3R\leq 
b_i-a_i \leq (\ell_i-k_i+2)\ep_3R$, so provided $\ep_3 R$ is large enough (depending on $\ep_8$), i.e. $\beta\Delta$ is small enough,  
\[
  \varphi\bigg(\ep_3 R(\ell_i - k_i) \bigg) \leq (\ell_i - k_i)^{\ep_8c} \varphi(\ep_3 R).
  \]
Therefore we can bound \eqref{Jstarbound} by
\begin{align} \label{Jstarbound2}
 &\sum_{(a_i\in I^*_{k_i},\,b_i\in I^*_{\ell_i+1})_{i\leq m}} \prod_{i=1}^m
    P^X( x_{a_i}=0 \mid \mY_{[b_{i-1},a_{i}),R} )    
    \frac{ 2\varphi(\ep_3 R) }{ (\ep_3 R)^c(\ell_i-k_i)^{(1-\ep_8)c} } \notag \\
  &\quad =\quad \prod_{i=1}^m \,\,
    \sum_{a_i\in I^*_{k_i},\,b_{i-1}\in I^*_{\ell_{i-1}+1}}
    P^X( x_{a_i}=0 \mid \mY_{[b_{i-1},a_{i}),R} ) \,
    \frac{ 2\varphi(\ep_3 R) }{ (\ep_3 R)^c(\ell_i-k_i)^{(1-\ep_8)c} }\notag\\
  &\quad =\quad  \prod_{i=1}^m \,\,
    \sum_{b_{i-1}\in I^*_{\ell_{i-1}+1}}
    E^X( L_{I^*_{k_i}} \mid \mY_{[b_{i-1},a^*_{i}),R} ) \,
    \frac{ 2\varphi(\ep_3 R) }{ (\ep_3 R)^{c}(\ell_i-k_i)^{(1-\ep_8)c} }\notag\\
  &\quad \leq \quad \prod_{i=1}^m \ep_3 R\max_{b_{i-1}\in I^*_{\ell_{i-1}+1}} 
       E^X( L_{I^*_{k_i}} \mid \mY_{[b_{i-1},a^*_{i}),R} ) \,
       \frac{ 2\varphi(\ep_3 R) }{ (\ep_3 R)^c(\ell_i-k_i)^{(1-\ep_8)c} }. \notag\\
\end{align}
We now need the bound
\begin{eqnarray}\label{mean_est}
\max_{b_{i-1}\in I^*_{\ell_{i-1}+1}} 
    E^X( L_{I^*_{k_i}} \mid \mY_{[b_{i-1},a^*_{i}),R} ) &\leq&
    E^X( L_{\ep_3 R} \mid \mY_{\ep_3 R,R} )\nonumber\\
 &=&\sum_{k} P^X( L_{\ep_3 R}\geq k \mid \mY_{\ep_3 R,R} )\nonumber\\
&\leq& \sum_{k} P^X( \max_{i\leq k}\mE_i \leq \ep_3R \mid \mY_{\ep_3 R,R} )\nonumber\\
&=& \sum_{k} \left(1-P^X( \mE > \ep_3R \mid \mE\leq R )\right)^k\nonumber\\
&\leq&\sum_k e^{-k P^X( \mE > \ep_3R \mid \mE\leq R )}\nonumber\\
&\leq&\frac{1}{P^X( \mE > \ep_3R \mid \mE\leq R )}\nonumber\\
&\leq& \frac{K_6(\ep_3 R)^{c-1}}{\varphi(\ep_3 R)}.
\end{eqnarray}
Inserting this bound into \eqref{Jstarbound} we obtain \eqref{produpper}.
\end{proof}

\begin{lemma}\label{E[Q]}
Let $0<\theta<1$.  
Provided $\ep_2 $ is sufficiently small, and $\beta\Delta$ is sufficiently small (depending on $\ep_2$), for all $a,b$ with $b-a\geq R(\Delta)$ we have
\begin{eqnarray*}
  E^V[\,Q_{[a,b]}\,] \leq \frac{1}{1-\theta}.
\end{eqnarray*}
\end{lemma}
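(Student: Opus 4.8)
The plan is as follows. Since the $V_n$ are standard Gaussian, for any fixed path $\bx$ one has
\[
  E^V\bigl[e^{\beta H_{[a,b]}^u(\bx,\bV)}\bigr] = e^{(\beta u + \beta^2/2)L_{[a,b]}(\bx)} = e^{\beta\Delta L_{[a,b]}(\bx)},
\]
exactly as in \eqref{EVEX}, so $E^V[Q_{[a,b]}] = p_{b-a}^{-1}E^X\bigl[e^{\beta\Delta L_{[a,b]}};\mQ_{[a,b]}\bigr]$. I would then decompose a path in $\mQ_{[a,b]}$ according to its long excursions, writing it as an alternation $L_1B_1L_2B_2\cdots B_{j-1}L_j$ of $j\ge1$ long excursions $L_i$ (lengths $\ell_i\ge R$) with $j-1$ intervening ``short blocks'' $B_i$ (occupied segments of lengths $n_i\le M$, each a finite string of excursions of length $<R$). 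Using $L_{[a,b]} = 1 + j + \sum_i(\#\text{ excursions in }B_i)$ and the renewal structure of $P^X$,
\[
  E^V[Q_{[a,b]}] = \frac{e^{\beta\Delta}}{p_{b-a}}\sum_{j\ge1}\ \sum_{\substack{\ell_1,\dots,\ell_j\ge R,\ 0\le n_1,\dots,n_{j-1}\le M\\ \ell_1+\cdots+\ell_j+n_1+\cdots+n_{j-1}=b-a}}\ \prod_{i=1}^j\bigl(e^{\beta\Delta}p_{\ell_i}\bigr)\prod_{i=1}^{j-1}s(n_i),
\]
where $p^{*k}$ is the $k$-fold convolution of $(p_n)_{n\ge1}$ and $s(n)=\sum_{k\ge0}(e^{\beta\Delta})^kp^{*k}(n)$ (for $n\le M<R$ every summand automatically has all its excursions $<R$). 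The $j=1$ term equals $e^{2\beta\Delta}\to1$; the point is to show the rest sums to something small.

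Two estimates handle $j\ge2$. First, a one-big-jump bound: writing $p^{*j}_{\ge R}$ for the $j$-fold convolution of $n\mapsto p_n\mathbf 1_{\{n\ge R\}}$, there is $C=C(c,\varphi)$ with $p^{*j}_{\ge R}(s)\le C^{\,j-1}P^X(\mE\ge R)^{\,j-1}p_s$ for all $j\ge1$, $s\ge jR$, once $\beta\Delta$ is small; this follows by induction on $j$, splitting $\sum_{\ell\ge R}p_\ell\,p^{*(j-1)}_{\ge R}(s-\ell)$ at $\ell=s/2$ and using $p_t\le C'p_s$ uniformly for $s/2\le t\le s$ (uniform convergence theorem for slowly varying functions) together with $\sum_{\ell\ge R}p_\ell=P^X(\mE\ge R)$. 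Second, $\sum_{n=0}^Ms(n)\le C''(\beta\Delta)^{-1}$: since $\sum_{n=0}^Ms(n)=\sum_{k\ge0}(e^{\beta\Delta})^kP^X(S_k\le M)$ with $S_k$ a sum of $k$ i.i.d.\ excursion lengths, the Chernoff bound $P^X(S_k\le M)\le e^{(\alpha_0+\eta)M}\bigl(E^X[e^{-(\alpha_0+\eta)\mE}]\bigr)^k$ (with $\alpha_0=\alpha_0(\beta\Delta)$ as defined just before Lemma \ref{concavity2}, so $\alpha_0=\beta f_a(\beta,u)$) gives, summing the geometric series in $k$,
\[
  \sum_{n=0}^Ms(n)\ \le\ \frac{e^{(\alpha_0+\eta)M}}{\,1-e^{\beta\Delta}E^X[e^{-(\alpha_0+\eta)\mE}]\,}\ \le\ \frac{2\,\delta^*(\Delta)\,e^{(\alpha_0+\eta)M}}{\eta}\qquad(\eta\downarrow0),
\]
by $e^{\beta\Delta}E^X[e^{-\alpha_0\mE}]=1$ and the fact that the derivative at $\eta=0$ of $\eta\mapsto e^{\beta\Delta}E^X[e^{-(\alpha_0+\eta)\mE}]$ equals $-e^{\beta\Delta}E^X[\mE\,e^{-\alpha_0\mE}]=-1/\delta^*(\Delta)$ (the tilted law $\nu_{\alpha_0}$ of the Introduction has mean $1/\delta^*(\Delta)$). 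Taking $\eta=1/M$ and using $\alpha_0 M(\Delta)\asymp1$ as $\beta\Delta\to0$ (because $M$ is proportional to the annealed correlation length $\alpha_0^{-1}$; see \cite{Al08} and \eqref{M_def}) together with $M(\Delta)\delta^*(\Delta)=(\beta\Delta)^{-1}$ yields the claim.

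Now, bounding $p_{b-a-\sum n_i}\le C'''p_{b-a}$ (valid since $\sum n_i\le(j-1)M\le\ep_2(b-a)$, by slow variation), the $j$-th term is at most $e^{(j+1)\beta\Delta}C'''\bigl(C\,P^X(\mE\ge R)\sum_{n\le M}s(n)\bigr)^{j-1}$. The decisive input is that, by the asymptotics of \cite{Al08} recalled in the proof of Lemma \ref{concavity2}, $P^X(\mE\ge R(\Delta))\le C^*\ep_2^{\,c-1}\beta\Delta$ for $\beta\Delta$ small when $1<c<2$ (and $P^X(\mE\ge R(\Delta))=o(\beta\Delta)$ when $c=2$ or $c>2$), so $C\,P^X(\mE\ge R)\sum_{n\le M}s(n)\le C^{**}\ep_2^{\,c-1}$ (respectively, $\to0$ as $\beta\Delta\to0$). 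Hence, choosing $\ep_2$ so small that $eC^{**}\ep_2^{\,c-1}<\half$ and then $\beta\Delta$ small, the geometric sum over $j\ge2$ contributes at most $\theta/(2(1-\theta))$ while $e^{2\beta\Delta}\le1+\theta/(2(1-\theta))$, and adding gives $E^V[Q_{[a,b]}]\le(1-\theta)^{-1}$, uniformly in $a,b$. The main obstacle is precisely this last step: $P^X(\mE\ge R)$ and $\sum_{n\le M}s(n)$ individually diverge as $\beta\Delta\to0$ (like $\beta\Delta$ and like $(\beta\Delta)^{-1}$), so only their exact cancellation --- which rests on the delicate \cite{Al08} relations among $R$, $M$, $\delta^*$ and the tail of $\mE$, and on keeping the prefactor $\delta^*(\Delta)$ (not merely $e^{\alpha_0M}$) in the estimate for $\sum_{n\le M}s(n)$ --- leaves a quantity controlled by $\ep_2$ alone.
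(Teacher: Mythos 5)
Your argument is correct in substance but follows a genuinely different route from the paper's. The paper decomposes $Q_{[a,b]}=\sum_{k\geq 0}Q^k_{[a,b]}$ according to the number $k$ of short occupied segments, bounds $E^V[Q^1_{[a,b]}]$ by conditioning on the last return in a window of length $M$ and invoking the bound $E^X[e^{\beta\Delta L_M}]\leq e^{K_7}$ from \cite{Al08} together with $P^X(\mE>R)/P^X(\mE>M)\leq 2\ep_2^{c-1}$, and then iterates this one-segment estimate (via the events $A^i_y$) to get $E^V[Q^k_{[a,b]}]\leq\theta^k$ and sum a geometric series. You instead make a single global renewal decomposition into $j$ long excursions and $j-1$ short blocks, recover the normalization $p_{b-a}$ through a local ``one-big-jump'' convolution inequality $p^{*j}_{\geq R}(s)\leq C^{j-1}P^X(\mE\geq R)^{j-1}p_s$ (your induction is sound and the constant is uniform in $j$, as needed), and replace the \cite{Al08} citation by a direct Chernoff/tilting estimate $\sum_{n\leq M}s(n)\lesssim(\beta\Delta)^{-1}$; the product $P^X(\mE\geq R)\sum_{n\leq M}s(n)\lesssim\ep_2^{c-1}$ then plays exactly the role of the paper's ratio $P^X(\mE>R)/P^X(\mE>M)$ multiplied by $e^{K_7}$. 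The paper's route buys brevity (no convolution estimates, and the geometric decay in the number of segments comes from a clean iteration over subintervals); yours is more self-contained and makes the cancellation between the divergent factors $P^X(\mE\geq R)\asymp\ep_2^{c-1}\beta\Delta$ and $\sum_{n\leq M}s(n)\asymp(\beta\Delta)^{-1}$ completely explicit.

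One step needs repair. Writing $h(\eta)=e^{\beta\Delta}E^X[e^{-(\alpha_0+\eta)\mE}]$, you bound $\sum_{n\leq M}s(n)\leq e^{(\alpha_0+\eta)M}/(1-h(\eta))$ and then claim $1-h(\eta)\geq\eta/2\delta^*(\Delta)$ ``as $\eta\downarrow 0$'' from $h'(0)=-1/\delta^*(\Delta)$. As written this is not a proof: convexity of $h$ gives the opposite inequality $1-h(\eta)\leq\eta/\delta^*(\Delta)$, and you apply the bound at $\eta=1/M$, which is comparable to $\alpha_0$ rather than infinitesimal, so a quantitative statement at that scale is required. The fix is short: $1-h(\eta)=\int_0^\eta e^{\beta\Delta}E^X[\mE e^{-(\alpha_0+s)\mE}]\,ds\geq\eta\,e^{\beta\Delta}E^X[\mE e^{-(\alpha_0+\eta)\mE}]$, and since $1/M\leq K\alpha_0$ for small $\beta\Delta$ and the excursion law is regularly varying, $E^X[\mE e^{-(\alpha_0+1/M)\mE}]\geq c_1E^X[\mE e^{-\alpha_0\mE}]=c_1e^{-\beta\Delta}/\delta^*(\Delta)$ with $c_1=c_1(c,\varphi)>0$ uniformly as $\beta\Delta\to 0$ (the cases $c=2$ and $E^X(\mE)<\infty$ are handled the same way, using $\olm$). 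Alternatively you can sidestep the computation and use the paper's own input: decomposing $E^X[e^{\beta\Delta L_M}]$ at the last return to $0$ before $M$ gives $P^X(\mE>M)\sum_{n\leq M}s(n)\leq E^X[e^{\beta\Delta L_M}]\leq e^{K_7}$, whence $P^X(\mE\geq R)\sum_{n\leq M}s(n)\leq e^{K_7}\,P^X(\mE\geq R)/P^X(\mE>M)\leq 2e^{K_7}\ep_2^{c-1}$, which is all your final step uses and covers all $c$ uniformly. With either repair your proof goes through.
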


 \begin{proof}
We write $R,M$ for $R(\Delta),M(\Delta)$, respectively.  Let $Q^k_{[a,b]}$, $k\geq 0$, be the contribution to $Q_{[a,b]}$
from paths which have $k$ short occupied segments, so that $Q^0_{[a,b]}=1$. Let us first estimate the contribution
\begin{eqnarray*}
  Q^1_{[a,b]}=\frac{1}{p_{b-a}}\sum_{ \substack{a+R\leq j_1<j_2\leq b-R\nonumber\\
    j_2\leq j_1+M}}
    p_{j_1-a} \,E^X \left[\,e^{\beta H_{[j_1,j_2]}^u(\bx,\bV)} \delta_{ \{x_{j_2}=0\} } \mid x_{j_1}= 0\,\right]\,p_{b-j_2}.
\end{eqnarray*}
Let $W_{[a,b]}$ denote the last time that the path visits zero in the interval $[a,b]$.
Using the symmetry over the indices $j_1,j_2$ we get that 
\begin{eqnarray} \label{Q1bound}
  Q^1_{[a,b]}&\leq&\frac{2}{p_{b-a}}\sum_{
    \substack{
    a+R\leq j_1<j_2\leq b-R\nonumber\\
    j_2\leq j_1+ M;\,
    j_1\leq\frac{b+a}{2}
    }}
    p_{j_1-a} \,E^X \left[\,e^{\beta H_{[j_1,j_2]}^u(\bx,\bV)} \delta_{ \{x_{j_2}=0\} } \mid x_{j_1}= 0\,\right]\,p_{b-j_2}\nonumber \\
  &\leq&\frac{2}{p_{b-a}}\sum_{
    \substack{
    a+R\leq j_1<j_2\leq b-R\nonumber\\
    j_2\leq j_1+ M;\,
    j_1\leq\frac{b+a}{2}
    }}
    p_{j_1-a} \,E^X \left[\,e^{\beta H_{[j_1,j_1+ M]}^u(\bx,\bV)} \delta_{ \{ W_{[j_1,j_1+ M]}=j_2 \} } \mid x_{j_1}= 0\,\right]
      \nonumber \\
  &&\hskip 6cm\,\cdot\frac{p_{b-j_2}}{P^X( \mE> M-j_2+j_1)}.
\end{eqnarray}
Recalling that $ M < R$, we have $b-j_2 \geq (b-a)/4$, and thus $p_{b-j_2} \leq 2p_{\lfloor (b-a)/4 \rfloor}$, for all $j_2$ appearing in the sum.  Therefore \eqref{Q1bound} yields
\begin{align}
  E^V[ Q^1_{[a,b]} ] &= \frac{4}{p_{b-a}}\frac{ p_{\lfloor (b-a)/4 \rfloor} }{P^X(\mE> M)} \notag \\
 &\qquad \cdot\sum_{
  \substack{
  a+R\leq j_1<j_2\leq b-R\nonumber\\
  j_2\leq  M\,;
  j_1\leq\frac{b+a}{2}
  }} p_{j_1-a} \,E^X E^V \left[\,e^{\beta H_{M}^u(\bx,\bV)};\,W_{[0,M]}=j_2 - j_1\,\right] \notag\\
&\leq \frac{4^{c+2}}{P^X(\mE> M)}
  E^X E^V \left[\,e^{\beta H_{M}^u(\bx,\bV)}\,\right]
  \sum_{
  a+R\leq j_1\leq\frac{b+a}{2}
  } p_{j_1-a} \\
&\leq \frac{4^{c+2}}{P^X(\mE> M)}
  E^X \left[\,e^{\beta \Delta L_{M} }\,\right] P^X(\mE>R). \notag 
\end{align}
From \cite{Al08} we have $E^X \left[\,e^{\beta \Delta L_{M} }\,\right] < e^{K_7}$ for some constant $K_7$.  Hence provided $\beta\Delta$ is small (depending on $\ep_2$), from \eqref{RvsM} we have
\begin{eqnarray*}
  E^V[\,Q^1_{[a,b]}\, ] 
  &\leq& K_8e^{K_7} \frac{ P^X(\mE>R) }{ P^X(\mE> M) }\\
  &\leq& 2K_8 e^{ K_7 } 
    \left( \frac{M}{R} \right)^{c-1} \\
  &=& 2K_8e^{K_7} \ep_2^{c-1}.
\end{eqnarray*}
We now take $\ep_2$ small enough so the last quantity is at most $\theta$.

For $y \in [a,b]$ and $k \geq 1$ let $A_y^i$ denote the event that the $i$th long excursion starting at or after $a$ ends at $y$, and let $Q_{[a,b]}^k(A_y^i)$ denote the contribution to $Q_{[a,b]}^k$ from paths in $A_y^i$.  For $k=1$ we have $Q_{[a,b]}^1(A_b^2) = Q_{[a,b]}^1$ since all contributing paths have the second excursion ending at $b$, and similarly for $k=0$ we have $Q_{[a,b]}^0(A_b^1) = Q_{[a,b]}^0 = 1$.
Thus what we have shown is that $E^V[\,Q^1_{[a,b]}(A_b^2)\, ] \leq \theta E^V[\, Q^0_{[a,b]}(A_b^1)\, ]$.
The same argument applied to the interval $[a,y]$ in place of $[a,b]$ gives $E^V[\, Q^k_{[a,b]}(A_y^2)\, ] \leq \theta E^V[\, Q^{k-1}_{[a,b]}(A_y^1)\, ]$ for all $k \geq 2$ and all $y$, so summing over $y$ and then iterating over $k$ gives 
$E^V[\,Q^k_{[a,b]}\, ]\leq \theta^k$.  Then
\begin{eqnarray*}
  E^V[\,Q_{[a,b]}\, ]\leq \sum_{k\geq 0}E^V[\,Q^k_{[a,b]}\, ]\leq\sum_{k\geq 0}\theta^k=\frac{1}{1-\theta}.
\end{eqnarray*}
\end{proof}

In bounding $Z_N^0(\mD(\delta_2))$ via \eqref{ZND2},
the crucial estimate will be on the partition function
\begin{eqnarray*}
  Z_N^0(\hmW^*(\hmJ,\delta) \mid \hmW^*(\hmJ,\delta))
    = \exp\left( Y(\delta,\hmJ)+Y'(\delta,\hmJ) \right),
\end{eqnarray*}
where
\begin{eqnarray*}
 Y(\delta,\hmJ)=\log 
   E^V\Big[\,Z_N^0(\hmW^*(\hmJ,\delta) \mid \hmW^*(\hmJ,\delta))\,\Big|\, \overline{V}\,^{\mL^*(\hmJ)} \Big]
\end{eqnarray*}
and
\begin{eqnarray*}
  Y'(\delta,\hmJ)=\log \frac{Z_N^0(\hmW^*(\hmJ,\delta) \mid \hmW^*(\hmJ,\delta))}
    {E^V\Big[\,Z_N^0(\hmW^*(\hmJ,\delta) \mid \hmW^*(\hmJ,\delta))\,\Big|\, \overline{V}\,^{\mL^*(\hmJ)} \Big]}.
\end{eqnarray*}
Recalling \eqref{psi_lambda} and \eqref{g}, define
\begin{eqnarray*}
  Y_\lambda(\delta,\hmJ)=
 \psi_\lambda(\mL^*(\hmJ),\overline{V}\,^{\mL^*(\hmJ)},\delta) 
  +(1-\lambda)g(\mL^*(\hmJ),\delta).
\end{eqnarray*}

\begin{lemma} \label{Ylambda}
For all $0<\lambda<1$, all lifted skeletons $\hmJ$ and all $\delta \geq (1-h_1)\delta_2$ compatible with $\mL^*(\hmJ)$ we have $Y(\delta,\hmJ) \leq \psi_1(\mL^*(\hmJ),\overline{V}\,^{\mL^*(\hmJ)},\delta)$, and
on the set $T_N$ we have 
\begin{eqnarray*}
  Y(\delta,\hmJ)\leq Y_\lambda(\delta,\hmJ).
\end{eqnarray*}
\end{lemma}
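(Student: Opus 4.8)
The plan is to establish the two assertions of Lemma \ref{Ylambda} more or less directly from the explicit conditional-expectation formula \eqref{condlexp}, specialized to the class $\hmW^*(\hmJ,\delta)$, and from the definition of the set $T_N$. First I would observe that conditioning on $\olV^{\mL^*(\hmJ)}$ and evaluating the Gaussian exponential moment of the residual disorder yields
\[
  E^V\Big[\,Z_N^0(\hmW^*(\hmJ,\delta)) \,\Big|\, \olV^{\mL^*(\hmJ)} \Big]
  = E^X\Big[\, e^{\beta(\Delta + \olV^{\mL^*(\hmJ)})L_{\mL^*(\hmJ)} - \frac{\beta^2 L_{\mL^*(\hmJ)}^2}{2|\mL^*(\hmJ)|}}\,;\,\hmW^*(\hmJ,\delta)\,\Big],
\]
exactly as in \eqref{condlexp} but with $\mJ$ replaced by the lifted CG skeleton $\mL^*(\hmJ)$ and with the class $\mW_+$ replaced by $\hmW^*(\hmJ,\delta)$. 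On the event $\{\mS(\bx) = \hmJ\}$ the local time $L_{\mL^*(\hmJ)}(\bx)$ is determined by the contact fraction: $L_{\mL^*(\hmJ)} = \delta\,|\mL^*(\hmJ)|$ on $\hmW^*(\hmJ,\delta)$, since by definition of $\hmW^*$ the density of returns in $\mL^*(\hmJ)$ is exactly $\delta$. Substituting this, the exponent becomes the deterministic quantity $\beta(\Delta+\olV^{\mL^*(\hmJ)})\delta|\mL^*(\hmJ)| - \tfrac12\beta^2\delta^2|\mL^*(\hmJ)|$, which is precisely $\psi_1(\mL^*(\hmJ),\olV^{\mL^*(\hmJ)},\delta)$ by \eqref{psi_lambda}. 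Dividing by $P^X(\hmW^*(\hmJ,\delta))$ to pass to the normalized partition function only subtracts a nonnegative quantity $\log P^X(\hmW^*(\hmJ,\delta))^{-1} \geq 0$ if that probability is at most $1$ — actually one should just note $Z_N^0(\hmW^*(\hmJ,\delta)\mid\hmW^*(\hmJ,\delta)) \le Z_N^0(\hmW^*(\hmJ,\delta))/P^X(\hmW^*(\hmJ,\delta))$ and that the conditional expectation inherits the same normalization, so the normalization divides out of both numerator and denominator in $Y$. Either way we get $Y(\delta,\hmJ) \leq \psi_1(\mL^*(\hmJ),\olV^{\mL^*(\hmJ)},\delta)$, the first claim.

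For the second claim I would invoke the observation recorded just before the statement of Lemma \ref{Ylambda}: if $\beta(\Delta+v)\delta|\hmJs| \leq g(\hmJs,\delta)$ then $\psi_1(\hmJs,v,\delta) \leq \psi_\lambda(\hmJs,v,\delta) + (1-\lambda)g(\hmJs,\delta)$ for all $0<\lambda<1$. On the event $T_N$, by its very definition, the hypothesis $\beta\delta(\Delta+\olV^{\hmJs})|\hmJs| \leq g(\hmJs,\delta)$ holds simultaneously for every lifted CG skeleton $\hmJs = \mL^*(\hmJ)$ and every compatible $\delta \geq (1-h_1)\delta_2$. Applying the observation with $\hmJs = \mL^*(\hmJ)$ and $v = \olV^{\mL^*(\hmJ)}$ gives $\psi_1(\mL^*(\hmJ),\olV^{\mL^*(\hmJ)},\delta) \leq \psi_\lambda(\mL^*(\hmJ),\olV^{\mL^*(\hmJ)},\delta) + (1-\lambda)g(\mL^*(\hmJ),\delta)$, and the right side is exactly $Y_\lambda(\delta,\hmJ)$ by its definition. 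Chaining this with the first claim yields $Y(\delta,\hmJ) \leq Y_\lambda(\delta,\hmJ)$ on $T_N$.

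The only genuinely non-routine point is the bookkeeping in the first paragraph: one must be careful that (a) the class $\hmW^*(\hmJ,\delta)$ really does pin down $L_{\mL^*(\hmJ)}$ to the value $\delta|\mL^*(\hmJ)|$ — this is immediate from Definition \ref{CGdefs}, where $\hmW^*(\hmJ,\delta) = \{\bx : \mS(\bx) = \hmJ,\ D_{\mL^*(\hmJ)}(\bx) = \delta\}$ — and (b) the set $A$ over which the Gaussian exponential moment is computed in \eqref{condlexp} is indeed $\mL^*(\hmJ)\cap\ZZ$ and not the raw skeleton $\hmJ$; the Hamiltonian $H_N^u(\bx,\bV)$ restricted to a path with $\mS(\bx)=\hmJ$ only charges sites in $\hmJ \subseteq \mL^*(\hmJ)$, so conditioning on the coarser average $\olV^{\mL^*(\hmJ)}$ and integrating out the complementary randomness is legitimate and produces exactly the stated quadratic penalty with denominator $|\mL^*(\hmJ)|$. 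Given these identifications, both inequalities are one-line consequences of the definitions, so I do not anticipate any serious obstacle; the work was already done in setting up \eqref{condlexp}, the quantities $\psi_\lambda, g, Y_\lambda$, and the event $T_N$.
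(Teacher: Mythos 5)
Your proposal is correct and follows essentially the same route as the paper: compute the Gaussian conditional expectation given $\olV^{\mL^*(\hmJ)}$ to get the analogue of \eqref{condlexp} with penalty $-\tfrac12\beta^2L^2/|\mL^*(\hmJ)|$, observe that on $\hmW^*(\hmJ,\delta)$ the contact number is pinned (the paper writes $L_{\hmJ}$, which equals $L_{\mL^*(\hmJ)}=\delta|\mL^*(\hmJ)|$ there since the path has no zeros in $\mL^*(\hmJ)\setminus\hmJ$), so the exponent is the constant $\psi_1$, and then invoke the definition of $T_N$ together with the remark following \eqref{g}. Your hesitation about the normalization is unnecessary: since the exponent is constant on the conditioning event, $E^X[e^{\psi_1}\mid\hmW^*(\hmJ,\delta)]=e^{\psi_1}$ exactly, so $Y(\delta,\hmJ)$ in fact equals $\psi_1(\mL^*(\hmJ),\olV^{\mL^*(\hmJ)},\delta)$, which is what the paper's \eqref{quad} records.
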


\begin{proof}
Conditionally on $\olV^{\mL^*(\mJ)} = v$ for some $v$, $(V_n)_{n \in \mL^*(\mJ)}$ is multivariate normal with easily calculated mean and covariance; as noted in \cite{Al08} it follows readily that
\begin{align} \label{quad}
  e^{Y(\delta,\hmJ)} &= E^V\Big[\,Z_N^0(\hmW^*(\hmJ,\delta) \mid \hmW^*(\hmJ,\delta))\,\Big|\, 
    \overline{V}\,^{\mL^*(\hmJ)}\Big] \notag\\
  &= E^X\left[ \exp\left(\,\beta(\Delta+\overline{V}\,^{\hmJs})\,L_{\hmJ} - 
    \half \frac{\beta^2L_{\hmJ}^2}{|\,\mL^*(\hmJ)\,|}    
  \right) \,\Bigg|\,
  \hmW^*(\hmJ,\delta) \right]. 
\end{align}
By Lemma \ref{J,J^*}, on the set $\hmW^*(\hmJ,\delta)$ of paths, we have
\begin{align}
  \beta(\Delta+&\overline{V}\,^{\mL^*(\hmJ)})\,L_{\hmJ} - \half\frac{\beta^2L_{\hmJ}^2}{|\,\mL^*(\hmJ)\,|}\, \notag \\
  &= \,
 \beta(\Delta+ \overline{V}\,^{\mL^*(\hmJ)}) \delta \,|\,\mL^*(\hmJ)\,| - \frac{1}{2}\beta^2\delta^2|\,  
 \mL^*(\hmJ)\,| \notag \\
  &= \psi_1(\mL^*(\hmJ),\overline{V}\,^{\mL^*(\hmJ)},\delta), \notag 
\end{align}
and it is immediate from the definitions that on the set $T_N$ of disorders, we have
\[
  \psi_1(\mL^*(\hmJ),\overline{V}\,^{\mL^*(\hmJ)},\delta) 
    \leq \psi_\lambda(\mL^*(\hmJ),\overline{V}\,^{\mL^*(\hmJ)},\delta) 
    +(1-\lambda)g(\mL^*(\hmJ),k(\delta)),
    \]
which with \eqref{quad} yields the result.
\end{proof}

Equation \eqref{ZND2} and Lemma \ref{Ylambda} together show that on the set $T_N$, $Z_N^0(\mD(\delta_2))$ is bounded above by
\begin{equation}\label{ZNL}
  Z_{N,\lambda}^0 = \sum_{\hmJ}Q(\hmJ) \sum_{\delta\geq(1-h_1)\delta_2} \exp\left(Y_\lambda(\delta,  
  \hmJ)+Y'(\delta,\hmJ)\right) P^X(\hmW^*(\hmJ,\delta)),
\end{equation}
where the second sum is over $\delta$ compatible with $\mL^*(\hmJ)$.
We will show that $E^V[\, Z_{N,\lambda}^0\, ] $ increases at most polynomially in $N$.
Now $Q(\hmJ)$ and $\exp\left(Y_\lambda(\delta,\hmJ)+Y'(\delta,\hmJ)\right)$
are independent functions of $\bV$ for fixed $\hmJ$, and $E^V( \exp(Y'(\delta,\hmJ)) \mid \olV^{\mL^*(\hmJ)} ) = 1$, so we have 
\begin{equation}\label{EZNL}
  E^V[\, Z_{N,\lambda}^0\, ] = \sum_{\hmJ} \sum_{\delta\geq(1-h_1)\delta_2}\,E^V[\,Q(\hmJ)\,]\,
  E^V\left[\, \exp\left(Y_\lambda(\delta,\hmJ) \right)\,\right] P^X(\hmW^*(\hmJ,\delta)).
\end{equation}
Moreover, recalling $w(\mL^*(\hmJ))$ from Definition \ref{CGdefs}, we have the following estimate.

\begin{lemma} \label{Ylambdabound}
Given $K_3>0$, provided $\ep_0$ is sufficiently small, for all $\delta \geq (1-h_1)\delta_2$ and $\Delta < \ep_0\Delta_0$ we have
\begin{eqnarray*}
   E^V\left[ \exp\left(Y_\lambda(\delta,\hmJ)\right)\right] \leq
  \exp\left( -K_3 w(\mL^*(\hmJ)) \right) 
    P^X\left( \mW^*(\mL^*(\hmJ),\delta) \right)^{-(1-\lambda)}.
\end{eqnarray*}
\end{lemma}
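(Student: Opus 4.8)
Write $\hmJs=\mL^*(\hmJ)$. The plan is to evaluate $E^V[\exp(Y_\lambda(\delta,\hmJ))]$ exactly and then to show that the resulting \emph{deterministic} exponent is at most $-K_3 w(\hmJs)$. By the definitions \eqref{psi_lambda}, \eqref{g} and of $Y_\lambda$, the only randomness in $Y_\lambda(\delta,\hmJ)$ sits in the term $\lambda\beta\delta|\hmJs|\,\olV^{\hmJs}$; since the $V_n$ are i.i.d.\ standard Gaussian, $\olV^{\hmJs}$ is $N(0,1/|\hmJs|)$, so $E^V[\exp(\lambda\beta\delta|\hmJs|\,\olV^{\hmJs})]=\exp(\half\lambda^2\beta^2\delta^2|\hmJs|)$. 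Collecting the coefficients of $\beta^2\delta^2|\hmJs|$ contributed by $-\half$ (in $\psi_\lambda$), $\tfrac34(1-\lambda)$ (in $(1-\lambda)g$), and $\half\lambda^2$ (from the Gaussian moment), their sum is $\half(\lambda-1)(\lambda-\half)=-\half(1-\lambda)(\lambda-\half)$, which is strictly negative since $\tfrac23<\lambda<1$. The $-\log P^X(\mW^*(\hmJs,\delta))$ term in $(1-\lambda)g$ exponentiates to $P^X(\mW^*(\hmJs,\delta))^{-(1-\lambda)}$. Hence
\[
  E^V[\exp(Y_\lambda(\delta,\hmJ))]=\exp\!\Big(\lambda\beta\Delta\delta|\hmJs|-\tfrac12(1-\lambda)(\lambda-\half)\beta^2\delta^2|\hmJs|\Big)\,P^X(\mW^*(\hmJs,\delta))^{-(1-\lambda)},
\]
and it remains to bound the first exponent by $-K_3 w(\hmJs)$.

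For this I would use two elementary estimates. First, $\delta\geq(1-h_1)\delta_2\geq\half\ep_2\delta^*(\Delta)$ since $h_1<\half$, and by Lemma \ref{concavity2} we may shrink $\ep_0$ so that $\delta^*(\Delta)\geq K\,\Delta/\beta$ for any prescribed constant $K$ (this is the only place the hypotheses on $c$ and $\varphi$ enter, and for $K\geq K_4$ it subsumes \eqref{K2bound}). This gives $\Delta\leq 2\beta\delta/(\ep_2 K)$, hence $\lambda\beta\Delta\delta|\hmJs|\leq\frac{2\lambda}{\ep_2 K}\beta^2\delta^2|\hmJs|$; taking $K$ large makes the right side at most $\tfrac14(1-\lambda)(\lambda-\half)\beta^2\delta^2|\hmJs|$, so the whole exponent is at most $-\tfrac14(1-\lambda)(\lambda-\half)\beta^2\delta^2|\hmJs|$. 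Second, a lower bound for this quadratic quantity: $|\hmJs|\geq w(\hmJs)\,\ep_3 R(\Delta)$ and $R(\Delta)=M(\Delta)/\ep_2$ by \eqref{RvsM}, while by \eqref{M_def} and $\beta\delta^*(\Delta)\geq K\Delta$ one has $\beta^2\delta^{*2}\geq(\beta\delta^*)(K\Delta)=K/M(\Delta)$; combining these with $\beta^2\delta^2\geq\tfrac14\ep_2^2\beta^2\delta^{*2}$ yields $\beta^2\delta^2|\hmJs|\geq\tfrac14\ep_2\ep_3 K\,w(\hmJs)$.

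Putting the two estimates together gives $\lambda\beta\Delta\delta|\hmJs|-\tfrac12(1-\lambda)(\lambda-\half)\beta^2\delta^2|\hmJs|\leq-\tfrac1{16}(1-\lambda)(\lambda-\half)\ep_2\ep_3 K\,w(\hmJs)$, and choosing $\ep_0$ small enough that $K\geq 16K_3/((1-\lambda)(\lambda-\half)\ep_2\ep_3)$ produces the claimed bound. There is no genuine analytic obstacle here: once one observes that the Gaussian expectation leaves a \emph{negative} multiple of $\beta^2\delta^2|\hmJs|$, the rest is bookkeeping of constants. The one point requiring care is that $\ep_0$ must be taken small enough to make $\delta^*(\Delta)$ as large a multiple of $\Delta/\beta$ as needed — exactly the content of Lemma \ref{concavity2} — and that the slack retained after the cancellation (the factor $\tfrac14$ versus $\half$ of the negative quadratic term) exceeds the loss in passing from $|\hmJs|$ to $\ep_3 R(\Delta)w(\hmJs)$; since $K$ is at our disposal, both are automatic.
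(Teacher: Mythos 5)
Your proposal is correct and is essentially the paper's own argument: both evaluate the Gaussian average exactly, observe that the net quadratic coefficient is $-\tfrac12(1-\lambda)(\lambda-\tfrac12)<0$, and then use Lemma \ref{concavity2} (take $\ep_0$ small so $\delta^*(\Delta)\geq K\Delta/\beta$ with $K$ large) together with $\delta\geq\tfrac12\ep_2\delta^*$, $|\mL^*(\hmJ)|\geq w(\mL^*(\hmJ))\ep_3 R(\Delta)$ and $R=M/\ep_2$, $M=1/(\beta\Delta\delta^*)$ to convert the negative exponent into $-K_3$ per CG block. The only difference is constant bookkeeping (the paper fixes $K_4$ via \eqref{K2bound2} and reduces to a multiple of $\beta\Delta\delta|\mL^*(\hmJ)|$, while you keep the quadratic term and choose the large constant at the end), which does not change the argument.
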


\begin{proof}
Using \eqref{Deltavs} and \eqref{K2bound2}, for $K_9 = \frac{1}{4} (1-\lambda)\left( \lambda - \half \right)$ we obtain
\begin{eqnarray*}
 &&E^V\left[ \exp\left(Y_\lambda(\delta,\hmJ)\right)\right] \nonumber\\
  &&\quad = \exp\left( \lambda \beta \delta \Delta |\, \mL^*(\hmJ) \,| + \half (\lambda^2 - 1) \beta^2 \delta^2
   | \mL^*(\hmJ) | + \frac{3}{4}(1-\lambda)\beta^2\delta^2 | \mL^*(\hmJ) | 
   \right) \nonumber \\
   &&\qquad \qquad \cdot P^X\left( \mW^*(\mL^*(\hmJ),\delta) \right)^{-(1-\lambda)} \nonumber \\
   &&\quad =  \exp\left( \left( \lambda \beta \delta \Delta - 2K_9 \beta^2 \delta^2
      \right) |\,\mL^*(\hmJ)\,| \right) P^X\left( \mW^*(\mL^*(\hmJ),\delta) \right)^{-(1-\lambda)} \nonumber \\
    &&\quad \leq \exp\left( - (\ep_2K_9K_4 -\lambda) \beta\Delta\delta |\,\mL^*(\hmJ)\,| \right) 
      P^X\left( \mW^*(\mL^*(\hmJ),\delta) \right)^{-(1-\lambda)} \nonumber \\
    &&\quad \leq \exp\left( - \half \ep_2K_9K_4 \ep_3 w(\mL^*(\hmJ)) \right) 
      P^X\left( \mW^*(\mL^*(\hmJ),\delta) \right)^{-(1-\lambda)} \nonumber \\
    &&\quad \leq \exp\left( -K_3 w(\mL^*(\hmJ)) \right) 
      P^X\left( \mW^*(\mL^*(\hmJ),\delta) \right)^{-(1-\lambda)}. \nonumber
\end{eqnarray*}
\end{proof}

One can think of $K_3$ as the ``cost per CG block'' of an occupied segment, averaged over the disorder, on the set $T_N$ of disorders where Lemma \ref{Ylambda} applies.  Lemma \ref{Ylambdabound} says that this cost can be made arbitrarily large by taking $\ep_0$ small.  By contrast, the annealed system has a bounded gain per block, because the negative term in the exponent on the right side of \eqref{quad} is absent. 

We can now conclude the following.

\begin{lemma}\label{BoundOnZNL}
Provided $\ep_2$ and then $\ep_0$ are chosen sufficiently small, for sufficiently small $\beta\Delta$, $E^V[\,Z_{N,\lambda}^0\,]$ grows at most linearly in $N$.
\end{lemma}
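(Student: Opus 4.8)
The plan is to bound the right side of \eqref{EZNL}. Three ingredients feed in. First, by Lemma \ref{E[Q]} and the independence of the factors $Q_{[a_i,b_i]}$ (which depend on disjoint blocks of the disorder), $E^V[Q(\hmJ)]\leq(1-\theta)^{-m(\hmJ)}$ for any lifted skeleton $\hmJ$ with $m(\hmJ)$ long excursions, once $\ep_2$ and $\beta\Delta$ are small. Second, Lemma \ref{Ylambdabound} gives $E^V[\exp(Y_\lambda(\delta,\hmJ))]\leq e^{-K_3 w(\mL^*(\hmJ))}P^X(\mW^*(\mL^*(\hmJ),\delta))^{-(1-\lambda)}$ once $\ep_0$ is small. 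Third, every $\bx\in\hmW^*(\hmJ,\delta)$ has $\mS^*(\bx)=\mL^*(\hmJ)$ and $D_{\mL^*(\hmJ)}(\bx)=\delta$, so $\hmW^*(\hmJ,\delta)\subseteq\mW^*(\mL^*(\hmJ),\delta)$. I would then regroup the lifted skeletons $\hmJ$ in \eqref{EZNL} by their lifted CG skeleton $\hmJs=\mL^*(\hmJ)$: since $\ep_3$ is small (so \eqref{RvsM2} and $h_1<\half$ hold), coarse-graining shrinks excursions by at most $2\ep_3 R(\Delta)<R(\Delta)$ and enlarges occupied segments, hence preserves the number of excursions, so $m(\hmJ)=m(\hmJs)$; and the sets $\hmW^*(\hmJ,\delta)$ with $\mL^*(\hmJ)=\hmJs$ are disjoint and contained in $\mW^*(\hmJs,\delta)$, so $\sum_{\hmJ:\mL^*(\hmJ)=\hmJs}P^X(\hmW^*(\hmJ,\delta))\leq P^X(\mW^*(\hmJs,\delta))$. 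Combining the three ingredients and using $-(1-\lambda)+1=\lambda$,
\begin{equation*}
  E^V[Z_{N,\lambda}^0]\ \leq\ \sum_{\hmJs}\ \sum_{\delta\geq(1-h_1)\delta_2}(1-\theta)^{-m(\hmJs)}\,e^{-K_3 w(\hmJs)}\,P^X(\mW^*(\hmJs,\delta))^{\lambda},
\end{equation*}
the inner sum over $\delta$ compatible with $\hmJs$.

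Next I would carry out the remaining summations. For fixed $\hmJs$ there are at most $|\hmJs|+1\leq N+1$ compatible $\delta$, so Hölder's inequality gives $\sum_\delta P^X(\mW^*(\hmJs,\delta))^\lambda\leq(N+1)^{1-\lambda}P^X(\mW^*(\hmJs))^\lambda$, and Proposition \ref{P(J^*)} gives $P^X(\mW^*(\hmJs))^\lambda\leq\prod_{i=1}^m K_5^\lambda(\ell_i-k_i)^{-\lambda(1-\ep_8)c}$, where $\ell_i-k_i$ is the number of CG blocks spanned by the $i$th excursion and $\ell_i-k_i\geq e_{\min}:=\lceil(1-2\ep_3)/\ep_3\rceil$. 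A lifted CG skeleton is determined by $m$ together with the lengths, in CG-block units, of its $m+1$ occupied segments $s_0,\dots,s_m$ and of its $m$ excursions $e_1,\dots,e_m\geq e_{\min}$ (which sum to $N/(\ep_3 R(\Delta))$), with $w(\hmJs)=\sum_j s_j$. Dropping the requirement that the lengths sum correctly only adds nonnegative terms, so
\begin{equation*}
  E^V[Z_{N,\lambda}^0]\ \leq\ (N+1)^{1-\lambda}\sum_{m\geq0}(1-\theta)^{-m}\Big(\sum_{s\geq0}e^{-K_3 s}\Big)^{m+1}\Big(\sum_{e\geq e_{\min}}K_5^\lambda e^{-\lambda(1-\ep_8)c}\Big)^{m}\ =\ (N+1)^{1-\lambda}\,c_1\sum_{m\geq0}\Big(\frac{c_1 c_2}{1-\theta}\Big)^m,
\end{equation*}
with $c_1=(1-e^{-K_3})^{-1}$ and $c_2=\sum_{e\geq e_{\min}}K_5^\lambda e^{-\lambda(1-\ep_8)c}$.

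It remains to choose the constants so the geometric series converges. Since $c\geq3/2$ and $\lambda>2/3$ we have $\lambda c>1$, so I fix $\ep_8<1-(\lambda c)^{-1}$; then $\lambda(1-\ep_8)c>1$ and $c_2<\infty$. By the proof of Proposition \ref{P(J^*)}, $K_5$ is a bounded multiple of the constant $K_6$ controlling $P^X(\mE>\ep_3 R(\Delta)\mid\mE\leq R(\Delta))$ from below, and since $P^X(\mE>n)\sim\varphi(n)/((c-1)n^{c-1})$ this $K_6$ stays bounded as $\ep_3\to0$ (for $\beta\Delta$ small), so $c_2\to0$ as $\ep_3\to0$. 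Therefore: take $\theta=\half$; choose $\ep_2$ small enough for Lemma \ref{E[Q]} with this $\theta$ and for the earlier constraints on $\ep_2$; choose $\ep_3<\ep_2/8$ (so \eqref{h1} holds) and small enough that $c_2<\tfrac18$; then choose $\ep_0$ small enough, via Lemma \ref{Ylambdabound}, that $K_3$ is large enough that $c_1<2$. Then $c_1 c_2/(1-\theta)<\half$, the sum over $m$ is at most $2c_1\leq4$, and $E^V[Z_{N,\lambda}^0]\leq4(N+1)^{1-\lambda}$, which (since $1-\lambda<1$) grows at most linearly in $N$.

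The main obstacle is the bookkeeping in the first step: one must check that regrouping by CG skeletons loses nothing essential — that $\mL^*$ preserves $m$ and the excursion structure, that the $\delta$-summation costs only the factor $(N+1)^{1-\lambda}$, and that the powers of $P^X(\mW^*(\hmJs,\delta))$ from the three estimates combine to the harmless exponent $\lambda$. The one genuinely quantitative point is that $K_5$ must be bounded uniformly in $\ep_3$, so that $c_2$ can be driven to $0$ by shrinking $\ep_3$ independently of the later choices of $\theta$ and $K_3$; and the matching $\lambda>2/3$ against $c\geq3/2$ is exactly what makes $c_2$ finite at all.
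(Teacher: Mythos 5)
Your proof is correct and follows essentially the same route as the paper: starting from \eqref{EZNL}, you combine Lemma \ref{E[Q]}, Lemma \ref{Ylambdabound} and the regrouping $\sum_{\hmJ:\,\mL^*(\hmJ)=\hmJs}P^X(\hmW^*(\hmJ,\delta))\le P^X\left(\mW^*(\hmJs,\delta)\right)$ so that the exponents combine to $\lambda$, and then sum over lifted CG skeletons using Proposition \ref{P(J^*)} with $(1-\ep_8)\lambda c>1$, exactly as in the paper's argument (your H\"older treatment of the $\delta$-sum versus the paper's count-times-max bound is immaterial). The only real divergence is the final tuning: the paper fixes $\ep_3$ (subject only to \eqref{h1}) and kills the per-block entropy, $K_5$ included, by taking $K_3$ large via $\ep_0$, whereas you keep $K_3$ bounded and instead shrink $\ep_3$ so that $e_{\min}\approx 1/\ep_3$ drives your per-excursion factor $c_2$ to zero --- which is legitimate, since $K_5$ in Proposition \ref{P(J^*)} is indeed uniform in $\ep_3$ as you verify, and $\ep_3$ is chosen before $K_4$ and $\ep_0$, so no circularity is introduced.
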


\begin{proof}
From \eqref{EZNL}, Lemma \ref{E[Q]} (with $\theta = 1/2$) and Lemma \ref{Ylambdabound} (with $K_3 \geq 2$ to be specified), since there are at most $N$ values of $\delta$ compatible with a given $\hmJs$, we have
\begin{align}
  E^V&[\, Z_{N,\lambda}^0\, ]  \notag \\
  &\leq \sum_{\hmJs}\, \sum_{\delta \geq (1-h_1)\delta_2}\, \sum_{\hmJ: \mL^*(\hmJ) = \hmJs} 
    2^{m(\hmJ)}\, \exp\left( -K_3 w(\hmJs) \right) \notag \\
  &\qquad \qquad \qquad \qquad \cdot 
    P^X\left( \mW^*( \hmJs,\delta ) \right)^{-(1-\lambda)} P^X(\hmW^*(\hmJ,\delta)) \notag \\
  &\leq \sum_{\hmJs}
    2^{w(\hmJs)}\, \exp\left( -K_3 w(\hmJs) \right) \sum_{\delta \geq (1-h_1)\delta_2}
    P^X\left( \mW^*(\hmJs,\delta) \right)^{\lambda} \notag \\
  &\leq N \sum_{l=1}^\infty e^{-K_3l/2} \sum_{\{\hmJs: w(\hmJs)=l\}}  
    P^X\left( \mW^*(\hmJs) \right)^{\lambda}. \notag
  \end{align}
Since $\lambda>2/3$, we can take $\ep_8$ so that $(1-\ep_8)\lambda c>1$.  For fixed $l$ a CG skeleton $\hmJs$ with $w(\hmJs)=l$ can be characterized by a sequence of $l-1$ positive integers, the $j$th integer giving the number of CG blocks from the $j$th CG block in $\hmJs$ to the $(j+1)$st CG block in $\hmJs$.  Therefore by Proposition \ref{P(J^*)} we can take $K_3$ such that
\[
  \sum_{\{\hmJs: w(\hmJs)=l\}} P^X\left( \mW^*(\hmJs) \right)^{\lambda}
    \leq K_5^{\lambda (l-1)} \left( 1 + \sum_{j=2}^\infty \frac{1}{j^{(1-\ep_8)\lambda c} } \right)^{l-1}
    \leq e^{K_3 l/4}.
  \]
Then
\[
  E^V[\, Z_{N,\lambda}^0\, ]  \leq N  \sum_{l=1}^\infty e^{-K_3l/4} \leq 3Ne^{-K_3/4}.
    \]
\end{proof}  

The following is straightforward from Lemma \ref{BoundOnZNL}, Cheyshev's inequality and the Borel-Cantelli lemma.

\begin{proposition}\label{a.s. decay}
Provided $\ep_2$ and then $\ep_0$ are chosen sufficiently small, for sufficiently small $\beta\Delta$, with $P^V$ probability one, we have
\begin{eqnarray*}
\limsup_{N\to\infty}\frac{1}{N}\log Z_{N,\lambda}^0=0.
\end{eqnarray*}
\end{proposition}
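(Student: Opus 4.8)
The plan is to read off the almost sure statement from the first-moment estimate of Lemma~\ref{BoundOnZNL} via Markov's inequality and the Borel--Cantelli lemma, and then to complement it with an elementary deterministic lower bound showing the $\limsup$ cannot be negative.

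Fix $\ep_2$, $\ep_0$ and $\beta\Delta$ as Lemma~\ref{BoundOnZNL} requires, so that $E^V[Z_{N,\lambda}^0] \le K_{10}N$ for all large $N$. For each $\ep>0$, Markov's inequality gives
\[
  P^V\!\left( Z_{N,\lambda}^0 > e^{\ep N} \right) \;\le\; K_{10}\,N\,e^{-\ep N},
\]
and since $\sum_N K_{10}Ne^{-\ep N}<\infty$, Borel--Cantelli yields that almost surely $Z_{N,\lambda}^0 \le e^{\ep N}$ for all but finitely many $N$; hence $\limsup_{N\to\infty}\frac1N\log Z_{N,\lambda}^0 \le \ep$ $P^V$-a.s. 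Intersecting these probability-one events over a sequence $\ep\downarrow 0$ gives $\limsup_{N\to\infty}\frac1N\log Z_{N,\lambda}^0 \le 0$ $P^V$-a.s.

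For the reverse inequality I would retain a single term in \eqref{ZNL}. Take the lifted skeleton $\hmJ = [0,N]\setminus\big(\ep_3 R(\Delta),\,N-\ep_3 R(\Delta)\big)$ (legitimate once $N$ is large), consisting of one long excursion flanked by the two extreme CG blocks, so that $\mL^*(\hmJ)=\hmJ$ has exactly two CG blocks and $|\mL^*(\hmJ)|=2\ep_3R(\Delta)$ is independent of $N$, and choose a compatible $\delta=\delta_*\ge(1-h_1)\delta_2$. Since $Q(\hmJ)\ge 1$ (each factor satisfies $Q_{[a,b]}\ge Q^0_{[a,b]}=1$),
\[
  Z_{N,\lambda}^0 \;\ge\; \exp\!\big(Y_\lambda(\delta_*,\hmJ)+Y'(\delta_*,\hmJ)\big)\,P^X\!\big(\hmW^*(\hmJ,\delta_*)\big).
\]
The dominant cost in $P^X(\hmW^*(\hmJ,\delta_*))$ is that of a single excursion of length $N-2\ep_3R(\Delta)$, so $P^X(\hmW^*(\hmJ,\delta_*))\ge K_{11}\varphi(N)N^{-c}$ for large $N$, polynomially small up to a slowly varying factor. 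Moreover $Y_\lambda(\delta_*,\hmJ)$ and $Y'(\delta_*,\hmJ)$ depend on only a bounded number of the $V_i$ (those in the two extreme CG blocks), so a maximal bound for i.i.d.\ Gaussians, together with $E^V[e^{Y'(\delta_*,\hmJ)}\mid\olV^{\mL^*(\hmJ)}]=1$ for the upper tail and an explicit positive lower bound on the associated conditional partition function for the lower tail, shows $\frac1N(Y_\lambda(\delta_*,\hmJ)+Y'(\delta_*,\hmJ))\to 0$ a.s. Hence $\liminf_{N\to\infty}\frac1N\log Z_{N,\lambda}^0\ge 0$ a.s., and combining with the previous step gives the claim.

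The substantive content is entirely in Lemma~\ref{BoundOnZNL}; granted that linear first-moment bound there is no serious obstacle here. The only points needing a little care are that the Borel--Cantelli step controls the $\limsup$ along the full sequence (automatic, the events being indexed by every $N$) and that the random prefactors in the lower bound are $o(N)$, which reduces to a maximal estimate for finitely many Gaussians together with the explicit bounds on the relevant conditional partition function just mentioned.
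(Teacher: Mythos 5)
Your proposal is correct and takes essentially the same route as the paper: the paper's proof of this proposition is precisely your first paragraph, i.e.\ the linear bound on $E^V[Z_{N,\lambda}^0]$ from Lemma \ref{BoundOnZNL} combined with Chebyshev's (Markov's) inequality and the Borel--Cantelli lemma. The complementary $\liminf\geq 0$ construction is left implicit in the paper (only the upper bound is used in the proof of Theorem \ref{thm}), and your one-term lower bound is a legitimate way to justify the stated equality, needing only minor care with details such as the fact that $\mL^*(\hmJ)$ is slightly larger than $\hmJ$ when the excursion endpoints are themselves CG points, and the choice of a compatible $\delta_*\geq(1-h_1)\delta_2$ attainable by paths with $x_N=0$.
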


\section{Paths With Sparse Returns}
We estimate $Z_N^0(\mT(\delta_2))$ using the following variant of \eqref{ZND2}:
\begin{align} \label{ZNT}
  E^V&[ Z_N^0(\mT(\delta_2)) ] \notag \\
  &= \sum_{\hmJ^s} \sum_{ \{\hmJ: \mL^s(\hmJ) = \hmJ^s\} } 
    E^V[ Q(\hmJ) ]\, E^V[ Z_N^0(\mW_-(\hmJ,\delta_2)) ] \notag \\
  &\leq \sum_{\hmJ^s} E^V[ Z_N^0(\mW_-^s(\hmJ^s,\delta_2)) ] \max_{ \{\hmJ: \mL^s(\hmJ) = \hmJ^s\} } 
    E^V[ Q(\hmJ) ] \\
  &= \sum_{\hmJ^s} E^V\left[ Z_N^0\big(\mW_-^s(\hmJ^s,\delta_2) \mid \mW^s(\hmJ^s)\big) \right]\ P^X( \mW^s(\hmJ^s) )
    \max_{ \{\hmJ: \mL^s(\hmJ) = \hmJ^s\} } E^V[ Q(\hmJ) ]. \notag
\end{align}
The last maximum is easily bounded: by Lemma \ref{E[Q]} with $\theta= 1/2$ we have 
\begin{equation} \label{Prop3.6use}
  \max_{ \{\hmJ: \mL^s(\hmJ) = \hmJ^s\} } E^V[ Q(\hmJ) ] \leq 2^{m(\hmJ^s)}.
  \end{equation}
By straightforward computation (cf. \eqref{annealed_measure}, \eqref{EVEX}), for a lifted skeleton $\hmJ$, we have the annealed bound
\begin{equation}\label{annealed1}
 E^V\left[ Z_N^0\big(\mW_-^s(\hmJ^s,\delta_2) \mid \mW^s(\hmJ^s)\big) \right] 
    \leq \exp\left( \beta\Delta\delta_2\,|\,\hmJ^s\,| \right) 
    P^X\left( \mW_-^s(\hmJ^s,\delta_2) \mid \mW^s(\hmJ^s) \right),
\end{equation}
so we need to show that, on the right side, the exponential decay of the probability overcomes the growth of the exponential factor.  

We truncate and tilt the excursion length distribution to obtain a measure $\nu_{\alpha,R}$ on paths, given by
\begin{eqnarray}\label{nu}
  \nu_{\alpha,R}(\mE = k)=\frac{e^{\alpha k}}{E^X\left[\,e^{\alpha\mE}\,|\,\mE \leq R\,\right]}\,P^X\left(\mE=k\,\big|\,  
  \mE \leq R\right), \quad k \geq 1.
\end{eqnarray}
(Strictly speaking, $\nu_{\alpha,R}$ specifies a distribution only for excursion lengths, not for paths, but since the only relevant feature of the paths is their returns to 0, we will mildly abuse notation and view $\nu_{\alpha,R}$ as a distribution on paths.) 
We then have the following.

\begin{lemma}\label{LDP}
For all $\beta, \chi, n, R$ positive, for $\mY_{n,R}, \mY_{n,R}^0$ from Definition \ref{QJ}, we have
\begin{eqnarray*}
E^X\left[\,e^{-\beta\chi L_n}\,\big|\,\mY_{n,R}^0 \,\right]=e^{-\alpha n}\frac{\nu_{\alpha,R}(x_n=0)}{P^X(\,x_n=0\,|\,\mY_{n,R})},
\end{eqnarray*}
where $\alpha=\alpha(\beta\chi,R)$ satisfies  
\begin{eqnarray}\label{alpha_def}
  e^{\beta\chi}=E^X\left[\,e^{\alpha\mE}\,|\,\mE \leq R\,\right].
\end{eqnarray}
\end{lemma}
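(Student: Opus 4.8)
The identity is a bookkeeping statement about the i.i.d.\ excursion (renewal) structure underlying $P^X$, and the plan is to expand both sides over the returns of $\bx$ to $0$ and match.

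The main step is to evaluate the unnormalized numerator. Conditioning on the successive returns, a path in $\mY_{n,R}^0$ is exactly a concatenation of excursions of lengths $k_1,\dots,k_j$ with each $k_i\le R$ and $k_1+\dots+k_j=n$, occurring with $P^X$-weight $\prod_{i=1}^{j}\varphi(k_i)/k_i^{c}$, and $L_n$ depends only on $j$ on this event, so
\[
  E^X\!\left[e^{-\beta\chi L_n};\,\mY_{n,R}^0\right]
  \;=\; \sum_{j}\, e^{-\beta\chi L_n(j)} \!\!\sum_{\substack{k_1+\dots+k_j=n\\ 1\le k_i\le R}}\;\prod_{i=1}^{j}\frac{\varphi(k_i)}{k_i^{c}},
\]
where $L_n(j)$ is the value of $L_n$ on a path with $j$ excursions. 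Now $e^{-\beta\chi L_n}$ contributes one factor $e^{-\beta\chi}$ per return, hence can be absorbed into the per-excursion weights; and reading the definition \eqref{nu} of $\nu_{\alpha,R}$ backwards, for $k\le R$,
\[
  e^{-\beta\chi}\,\frac{\varphi(k)}{k^{c}}
  \;=\; \Bigl(e^{-\beta\chi}\,E^X\!\left[e^{\alpha\mE};\,\mE\le R\right]\Bigr)\, e^{-\alpha k}\,\nu_{\alpha,R}(\mE=k).
\]
The relation \eqref{alpha_def} is precisely the one for which the parenthesis equals $1$; with that choice of $\alpha$ the per-excursion weight becomes $e^{-\alpha k}\nu_{\alpha,R}(\mE=k)$, the factors $e^{-\alpha k_i}$ combine to $e^{-\alpha(k_1+\dots+k_j)}=e^{-\alpha n}$ independently of the decomposition, and the remaining sum $\sum_{j}\sum_{k_1+\dots+k_j=n}\prod_i\nu_{\alpha,R}(\mE=k_i)$ is the renewal mass function $\nu_{\alpha,R}(x_n=0)$. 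Hence $E^X[e^{-\beta\chi L_n};\,\mY_{n,R}^0]=e^{-\alpha n}\nu_{\alpha,R}(x_n=0)$.

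To conclude I would divide by $P^X(\mY_{n,R}^0)$: since $\mY_{n,R}^0=\mY_{n,R}\cap\{x_n=0\}$ one has $P^X(\mY_{n,R}^0)=P^X(\mY_{n,R})\,P^X(x_n=0\mid\mY_{n,R})$, so the quotient rearranges into $e^{-\alpha n}\nu_{\alpha,R}(x_n=0)/P^X(x_n=0\mid\mY_{n,R})$, which is the claim.

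I expect the only real obstacle to be the accounting in the middle step: one must track the per-return factor $e^{-\beta\chi}$ precisely (in particular how the return at the origin and the excursion reaching $n$ are counted) and be consistent about whether $E^X[e^{\alpha\mE}\mid\mE\le R]$ and the normalization of $\nu_{\alpha,R}$ carry the renormalizing factor $P^X(\mE\le R)$. It is exactly this accounting that forces the relation \eqref{alpha_def} between $\alpha$ and $\beta\chi$ and that is responsible for $P^X(\mY_{n,R})$, rather than $P^X(\mY_{n,R}^0)$, appearing in the denominator; once the renewal decomposition is in place, the rest is rearrangement of absolutely convergent sums.
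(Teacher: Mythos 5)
Your renewal decomposition and the final target are close to the paper's own argument, but the middle step fails, and it fails exactly at the point you flagged as the accounting obstacle. In your per-excursion identity the parenthetical constant is $e^{-\beta\chi}E^X[e^{\alpha\mE};\,\mE\le R]$, whereas \eqref{alpha_def} is stated in terms of the \emph{conditional} moment $E^X[e^{\alpha\mE}\mid\mE\le R]$; under \eqref{alpha_def} the parenthesis therefore equals $P^X(\mE\le R)$, not $1$. Consequently your evaluation of the unnormalized numerator carries a factor $P^X(\mE\le R)^{j}$, with $j$ the number of excursions, which varies over the terms of the sum and cannot be pulled out: the expansion actually gives $E^X[e^{-\beta\chi L_n};\,\mY_{n,R}^0]=e^{-\alpha n}\sum_j P^X(\mE\le R)^j\,\nu_{\alpha,R}(\mE_1+\cdots+\mE_j=n)$ rather than $e^{-\alpha n}\nu_{\alpha,R}(x_n=0)$. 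The concluding step compounds this: dividing your claimed numerator by $P^X(\mY_{n,R}^0)=P^X(\mY_{n,R})\,P^X(x_n=0\mid\mY_{n,R})$ leaves an extra factor $P^X(\mY_{n,R})^{-1}$ which your final line silently drops; and since $P^X(\mY_{n,R})$ is not of the form $P^X(\mE\le R)^{j}$ uniformly in $j$ (the number of constrained excursions under $\mY_{n,R}$ is itself variable), the two slips do not cancel each other.

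The way to make the computation close --- and it is the route the paper takes --- is to condition on $\mY_{n,R}$ from the start instead of working with the unnormalized restricted expectation: then each excursion carries the truncated weight $P^X(\mE=k\mid\mE\le R)$, for which \eqref{alpha_def} gives exactly $e^{-\beta\chi}\,P^X(\mE=k\mid\mE\le R)=e^{-\alpha k}\,\nu_{\alpha,R}(\mE=k)$ with no leftover constant; the factors $e^{-\alpha k_i}$ combine to $e^{-\alpha n}$, the remaining renewal sum is $\nu_{\alpha,R}(x_n=0)$, and dividing by $P^X(x_n=0\mid\mY_{n,R})$ converts conditioning on $\mY_{n,R}$ into conditioning on $\mY_{n,R}^0$, producing the stated denominator. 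In short, the truncation must enter the excursion weights themselves, not merely the range of summation: \eqref{alpha_def} is tuned to the conditional weights, and your use of the raw weights $\varphi(k)/k^{c}$ is precisely what leaves the unaccounted per-excursion factor $P^X(\mE\le R)$.
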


\begin{proof}
We compute
\begin{align*}
E^X&\left[\,e^{-\beta\chi L_n}\,\big|\, \mY_{n,R}^0\,\right]\\
&= \frac{E^X\left[\,e^{-\beta\chi L_n} \delta_{ \{x_n=0\} }\,|\,\mY_{n,R}\,\right]}{P^X(\,x_n=0\,|\, \mY_{n,R}\,)}\\
&= \frac{\sum_{k} e^{-\beta\chi k} P^X\left(\, \mE_1+\cdots+\mE_k=n,\,|\, \mY_{n,R}\,\right)}{P^X(\,x_n=0\,|\,  
  \mY_{n,R}\,)}\\
&= \frac{e^{-\alpha n}}{P^X(\,x_n=0\,|\, \mY_{n,R}\,)}
\sum_{k}\frac{ e^{\alpha n} P^X\left(\, \mE_1+\cdots+\mE_k=n\,|\, \mY_{n,R}\,\right)}
{\left(E^X\left[\,e^{\alpha\mE}\,|\,\mE \leq R\,\right]\right)^k}\\
&= e^{-\alpha n}\,\frac{\nu_{\alpha,R}(\,x_n=0\,)}{P^X(\,x_n=0\,|\, \mY_{n,R}\,)}.
\end{align*}
\end{proof}

The ratio of return probabilities which appears in Lemma \ref{LDP} is difficult to bound uniformly in $n,R$.  The purpose of our semi-CG skeletons is to allow replacement of the return probabilities at time $n$ by expected numbers of returns in an interval $I_i$ (see Definition \ref{semiCGdefs}.)  These are more readily estimated, as follows.  

\begin{lemma} \label{ratio_bound}
Let $l_0$ be as in Definition \ref{semiCGdefs}.
There exists $K_{10}$ (depending on $\ep_4$) such that, provided $R$ is sufficiently large, for all $\alpha>0$ and $i \geq l_0$,
\begin{equation} \label{ratiobound3}
  \frac{ E_{\nu_{\alpha,R}}(\, L_{I_i}\,)}{E^X(\, L_{I_i}\,|\, \mY_{n_i^+,R}\,)} \leq K_{10}.
  \end{equation}
\end{lemma}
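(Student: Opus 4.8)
\emph{Setup.} Both quantities count returns to $0$ inside $I_i$ under renewal-type measures, so the plan is to compare them through elementary renewal identities. Write $\pi(n)=P^X(\mE>n)$, let $\tilde u(n)=\sum_{j\ge 0}\bigl(P^X(\mE=\cdot)\mathbf 1_{\{\cdot\le R\}}\bigr)^{*j}(n)$ be the renewal mass of the truncated (defective) excursion law, and let $q(m)=P^X(\mY_{m,R})$. Splitting at the last return at or before a given time, and using independence of the excursions, one gets
\[
  P^X(x_n=0,\ \mY_{n_i^+,R})=\tilde u(n)\,q(n_i^+-n),
\]
\[
  q(n)=\sum_{m=1}^{\min(R,n)}\bigl(\pi(m-1)-\pi(R)\bigr)\tilde u(n-m)\quad(n\ge 1),\qquad
  \sum_{m=1}^{R}\bigl(\pi(m-1)-\pi(R)\bigr)=\olm(R).
\]
Hence $E_{\nu_{\alpha,R}}(L_{I_i})=\sum_{n\in I_i}u^{\nu}(n)$, where $u^{\nu}$ is the renewal mass of $\nu_{\alpha,R}$, and $E^X(L_{I_i}\mid\mY_{n_i^+,R})=q(n_i^+)^{-1}\sum_{n\in I_i}\tilde u(n)q(n_i^+-n)$.

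\emph{The numerator.} By the strong renewal property applied at the first return falling in $I_i$, $E_{\nu_{\alpha,R}}(L_{I_i})\le 1+E_{\nu_{\alpha,R}}(L_{[0,|I_i|]})$; the key structural point is that $|I_i|<R$ for every $i\ge l_0$ (by the definitions of $l_0,l_1$ and $\ep_2<1/2$; in fact $|I_i|\le\kappa_0 R$ with $\kappa_0=\max(\ep_2/4,\ep_4)<1$). I would then bound $E_{\nu_{\alpha,R}}(L_{[0,\ell]})$, $\ell=|I_i|<R$, by two stochastic-domination steps. First, $\nu_{\alpha,R}$ has monotone likelihood ratio increasing in $\alpha$, hence is stochastically increasing in $\alpha$; larger excursions produce fewer and later returns, so $E_{\nu_{\alpha,R}}(L_{[0,\ell]})\le E_{\nu_{0,R}}(L_{[0,\ell]})$ for all $\alpha>0$. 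Second, $\nu_{0,R}=P^X(\mE=\cdot\mid\mE\le R)$ stochastically dominates $P^X(\mE=\cdot\mid\mE\le\ell)$, so $E_{\nu_{0,R}}(L_{[0,\ell]})\le E_{P^X(\cdot\mid\mE\le\ell)}(L_{[0,\ell]})$. For the renewal process with excursion law $P^X(\mE=\cdot\mid\mE\le\ell)$, whose mean is $\asymp\olm(\ell)$, Lorden's inequality (write $L_{[0,\ell]}$ as the index of the first renewal exceeding $\ell$ and apply Wald) bounds the expected number of returns by $\asymp \ell/\olm(\ell)+E^X(\mE^2;\mE\le\ell)/\olm(\ell)^2$, and regular variation gives $E^X(\mE^2;\mE\le\ell)/\olm(\ell)^2\lesssim \ell/\olm(\ell)$ for every $c\ge 1$. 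Thus $E_{\nu_{\alpha,R}}(L_{I_i})\le K_{11}\bigl(1+|I_i|/\olm(|I_i|)\bigr)$, uniformly in $\alpha>0$, with $K_{11}$ depending only on $c$ and $\varphi$.

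\emph{The denominator.} Since $q$ is nonincreasing and $n_i^+-n\le|I_i|$ for $n\in I_i$,
\[
  E^X(L_{I_i}\mid\mY_{n_i^+,R})\ \ge\ \frac{q(|I_i|)}{q(n_i^+)}\sum_{n\in I_i}\tilde u(n),
\]
and I claim $\sum_{n\in I_i}\tilde u(n)\gtrsim |I_i|\,q(n_i^+)/\olm(n_i^+\wedge R)$, with constants depending on $c,\varphi,\ep_4$. When $n_i^+\le R$ this is elementary: there $\tilde u$ agrees with the (non-defective) renewal mass $u$ of $P^X(\mE=\cdot)$, $q$ is bounded between two positive constants on $[0,R]$, and the renewal-density estimate $u(n)\asymp 1/\olm(n)$ together with $n_i^+\asymp|I_i|$ gives $\sum_{n\in I_i}\tilde u(n)\asymp|I_i|/\olm(|I_i|)$. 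When $n_i^+>R$ (so $|I_i|=\ep_4 R$) I would invoke singularity analysis of the generating functions coming from the identities above: $\sum_n\tilde u(n)y^n=(1-\hat f_R(y))^{-1}$ and $\sum_n q(n)y^n=1+G(y)(1-\hat f_R(y))^{-1}$, where $\hat f_R(y)=\sum_{k\le R}P^X(\mE=k)y^k$ and $G(y)=\sum_{n\ge 1}(\pi(n-1)-\pi(R))y^n$; both series have their dominant singularity — a simple pole — at the \emph{same} point $y_*>1$ solving $\hat f_R(y_*)=1$, whence $\tilde u(n)\sim q(n)/G(y_*)$, and $G(y_*)\asymp\olm(R)$ since $y_*-1\asymp\pi(R)/\olm(R)$ forces $y_*^n\in[1,K_{12}]$ for $n\le R$ while $\sum_{n\le R}(\pi(n-1)-\pi(R))=\olm(R)$. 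Thus $\sum_{n\in I_i}\tilde u(n)\asymp|I_i|q(n_i^+)/\olm(R)$, and as $q(|I_i|)\asymp 1$ the displayed inequality yields $E^X(L_{I_i}\mid\mY_{n_i^+,R})\gtrsim|I_i|/\olm(R)\asymp|I_i|/\olm(|I_i|)$.

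\emph{Conclusion and the main obstacle.} Combining the two estimates, and using that $|I_i|/\olm(|I_i|)\to\infty$ (so the additive $1$ in the numerator bound is absorbed once $R$ is large), gives $E_{\nu_{\alpha,R}}(L_{I_i})\le K_{10}\,E^X(L_{I_i}\mid\mY_{n_i^+,R})$ with $K_{10}=K_{10}(c,\varphi,\ep_2,\ep_4)$, for all $\alpha>0$, $i\ge l_0$ and all sufficiently large $R$. I expect the hard part to be precisely the local renewal estimate $\tilde u(n)\asymp q(n)/\olm(R)$ for $n\gtrsim R$ used when $n_i^+>R$: both $\tilde u(\cdot)$ and $q(\cdot)$ decay exponentially at a rate that itself depends on $R$ (through $y_*$), and the comparison works only because they decay at exactly the same rate — a reflection of their sharing the dominant singularity $y_*$ — so the delicate step is to quantify, uniformly in $R$, how quickly this common-rate regime sets in, which I would do through a spectral-gap estimate separating $y_*$ from the remaining zeros of $1-\hat f_R$.
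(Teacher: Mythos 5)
Your numerator bound is fine (it is essentially the paper's: stochastic domination in $\alpha$ and in the truncation level reduces everything to a renewal count over an interval of length $|I_i|$), but the denominator argument has a genuine gap exactly where you flag it. For $n_i^+>R$ your entire lower bound rests on the claim $\tilde u(n)\asymp q(n)/\olm(R)$ for $n$ in $I_i$, i.e.\ for $n$ only of order $R$ beyond the truncation point, uniformly in $R$. The generating-function identities you write down only give the asymptotics $\tilde u(n)\sim q(n)/G(y_*)$ as $n\to\infty$ for \emph{fixed} $R$; to use it at $n\asymp R$ you need a quantitative, $R$-uniform statement about how fast the common-decay regime sets in, which is precisely the "spectral-gap'' estimate you defer and do not prove. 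This is not a routine step: both the location $y_*$ and the subdominant zeros of $1-\hat f_R$ move with $R$, and controlling them uniformly over the whole class of laws \eqref{excursion_law} is the hard analytic content of your route. Moreover, even in the regime $n_i^+\le R$ you invoke the local renewal estimate $u(n)\asymp 1/\olm(n)$; this is a strong renewal theorem which is automatic for $c>3/2$ and for finite mean, but at $c=3/2$ (a case the paper explicitly needs, with only $\varphi(n)\to 0$ assumed) it can fail without further conditions on $\varphi$, so your argument does not cover the stated hypotheses.

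The paper's proof is designed to avoid exactly these pointwise estimates — this is the stated purpose of the semi-CG intervals $I_i$. It bounds the numerator, as you do, by $4E^X(L_{r_i}\mid \mY_{r_i,R})$ with $r_i\approx \ep_4 n_i^+/4$, and then lower-bounds the denominator by the \emph{same} quantity times the probability of a return to $0$ in the first half $J_2$ of $I_i$, via the restart inequality $E^X(L_{I_i}\mid \mY_{n_i^+,R})\ge E^X(L_{r_i}\mid \mY_{r_i,R})\,P^X(\eta_{J_2}\in J_2\mid \mY_{n_i^+,R})$. The ratio is then at most $4/P^X(\eta_{J_2}\in J_2\mid \mY_{n_i^+,R})$, and this interval-hitting probability is bounded below by a constant depending only on $\ep_4$, $c$, $\varphi$ using nothing more than the tail asymptotics of $\mE$ (the first long excursion after the last return before $I_i$ either does not jump over $J_2$ or lands in it with probability comparable to a ratio of tails). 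No local renewal theorem and no uniformity-in-$R$ singularity analysis is needed. To salvage your write-up you would either have to prove the uniform local estimate for the truncated renewal mass near and beyond $R$, or replace the pointwise comparison of $\tilde u$ and $q$ by an interval-hitting argument of the paper's type.
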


\begin{proof}
For $i=l_0$ the lemma (with $K_{10}=1$) follows from the fact that excursion lengths are stochastically larger under $\nu_{\alpha,R}$ than under $P^X( \cdot \,|\, \mY_{n_i^+,R}\,)$.  Hence we 
fix $i>l_0, R$ and $\alpha$ and define $n = n_i^+ \wedge R$.
Let $s_i = n_i^+ - n_i^- \in (\ep_4 n - 2,\ep_4 n]$ and let $r_i = \lfloor s_i/4 \rfloor + 1 \geq \ep_4 n/4$.
For the numerator of \eqref{ratiobound3}, using again the stochastic domination of excursion lengths we have 
\begin{align} \label{numerator1}
  E_{\nu_{\alpha,R}}(\, L_{I_i}\,) & \leq E_{\nu_{\alpha,R}}(\, L_{I_i} \mid x_{n_i^-} = 0\,) \\
  &\leq E_{\nu_{\alpha,R}}( L_{s_i} ) \notag \\
  &\leq 4 E_{\nu_{\alpha,R}}( L_{r_i} ) \notag \\
  &\leq 4E^X(\, L_{r_i}\,|\, \mY_{r_i,R}\,). \notag
  \end{align}
  
For the denominator or \eqref{ratiobound3}, let $J_2 = (n_i^-,n_i^- + 2r_i]$, which is roughly the first half of $I_i$, let $n = n_i^+ \wedge R$, and let $\eta_{J_2} = \inf\{t \geq 0: x_t \in J_2\}$, with $\eta_{J_2} = \infty$ if there is no such $t$.
If we condition in the denominator also on a return to 0 in $J_2$, then we get a lower bound similar to \eqref{numerator1}.  More precisely, we have
\begin{align} 
  E^X(\, L_{I_i}\,|\, \mY_{n_i^+,R}\,) &\geq \sum_{j \in J_2} E^X( L_{I_i} \mid  \mY_{n_i^+,R}, \eta_{J_2} = j )
    P^X(\, \eta_{J_2} = j \,|\, \mY_{n_i^+,R}\,) \\
  &\geq E^X(\, L_{r_i}\,|\, \mY_{r_i,R}\,) P^X(\eta_{J_2} \in J_2 \mid  \mY_{n_i^+,R}), \notag
  \end{align}
which with \eqref{numerator1} shows that
\begin{equation} \label{ratiobound4}
  \frac{ E_{\nu_{\alpha,R}}(\, L_{I_i}\,)}{E^X(\, L_{I_i}\,|\, \mY_{n_i^+,R}\,)} \leq \frac{4}{ P^X(\eta_{J_2} \in J_2 \mid  \mY_{n_i^+,R}) },
\end{equation}
so we need a lower bound for the probability on the right side of \eqref{ratiobound4}.   

Define the interval
\[
  J_1 = \left( n_i^- - n, n_i^- \right]\ \cap\ [0,\infty).
    \]
Note that $J_1$ and $J_2$ are adjacent.  Due to the truncation of excursion lengths, there is always a visit to 0 in $J_1$, provided we count the visit at time 0 when $0 \in J_1$, and considering the first such return we obtain
\begin{equation} \label{hitspot}
  P^X(\eta_{J_2} \in J_2 \mid  \mY_{n_i^+,R}) \geq \min_{j \in J_1} P^X(\eta_{J_2} \in J_2 \mid  \mY_{n_i^+,R} \cap \{x_j = 0\} ).
  \end{equation}
If $E^X(\mE) < \infty$, it follows easily from the SLLN that the right side of \eqref{hitspot} is near 1 provided $n_i^+$ is large, so we assume $E^X(\mE) = \infty$, which means $c \leq 2$.

For $j \geq 0$ let $U_j$ and $W_j$ be the starting and ending points, respectively, for the first excursion starting in $[j,\infty)$ of length at least $r_i$.  If $U_j > n_i^-$ then there is no excursion which jumps over the interval $J_2$, so $\eta_{J_2} \in J_2$.  Hence
for $j \in J_1$,
\begin{align} \label{denomcalc}
  P^X&(\eta_{J_2} \in J_2 \mid  \mY_{n_i^+,R} \cap \{x_j = 0\} ) \notag \\
  &= P^X(\eta_{J_2} \in J_2 \mid  \mY_{\infty,R} \cap \{x_j = 0\} ) \notag \\
  &\geq P^X( U_j >n_i^- \mid  \mY_{\infty,R} \cap \{x_j = 0\} ) \notag \\
  &\qquad  + \sum_{k \in [j,n_i^-]} P^X( U_j = k, W_j \in J_2 \mid  \mY_{\infty,R} \cap \{x_j = 0\} ) \notag \\
  &\geq P^X( U_j >n_i^- \mid  \mY_{\infty,R} \cap \{x_j = 0\} ) \notag \\
  &\qquad  + \sum_{k \in [j,n_i^-]} P^X( W_j \in J_2 \mid  \mY_{\infty,R} \cap \{x_j = 0\} \cap \{ U_j = k\}) \notag \\
  &\qquad \qquad \qquad \qquad \cdot P^X(U_j = k \mid  \mY_{\infty,R} \cap \{x_j = 0\} ) \notag \\
  &= P^X( U_j >n_i^- \mid  \mY_{\infty,R} \cap \{x_j = 0\} ) \notag \\
  &\qquad  + \min_{k \in [j,n_i^-]}  P^X( \mE \in (n_i^- - k,n_i^- - k + 2r_i] \mid \mE \geq r_i)\notag \\
  &\qquad \qquad \qquad \qquad \cdot P^X(U_j \leq n_i^- \mid  \mY_{\infty,R} \cap \{x_j = 0\} )\notag \\
  &\geq \min_{k \in [j,n_i^-]}  \frac{ P^X( \mE \in [n_i^- - k + r_i,n_i^- - k + 2r_i]) }{ P^X(\mE \geq r_i) }.
  \end{align}
Since $n_i^- - k + 2r_i \leq 2n$, provided $n$ is large (depending on $\ep_4$), the last ratio in \eqref{denomcalc} is bounded below by
\[
  \frac{1-c}{2}\, \frac{ r_i (2n)^{-c} \varphi(2n) }{ r_i^{1-c} \varphi(r_i) } \geq \frac{1-c}{2}\, \left( \frac{\ep_4}{2} \right)^c 
    \frac{\varphi(2n)}{\varphi(r_i)} \geq K_{11}.
  \]
With $K_{10} = 4/K_{11}$, the lemma follows from this together with \eqref{ratiobound4}, \eqref{hitspot} and \eqref{denomcalc}.
\end{proof}

\begin{lemma}\label{alpha_relation}
Let $K_{12}>0$, let $\alpha = K_{12}/R(\Delta)$ and let $\chi$ be given by \eqref{alpha_def}.  Provided $\ep_2$ is sufficiently small (depending on $K_{12}$) and $\beta\Delta$ is sufficiently small (depending on $\ep_2$), we have 
\begin{equation}\label{alpha_relation2}
   \beta\chi\delta_2 \leq \frac{1}{2}\alpha(\beta\chi,R(\Delta)).
   \end{equation}
\end{lemma}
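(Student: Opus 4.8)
The plan is first to remove the implicit dependence of $\chi$ on $\alpha$ from \eqref{alpha_relation2}. Since $\alpha = K_{12}/R(\Delta)$ and $R(\Delta)\delta_2 = 1/(\beta\Delta)$ by \eqref{Rdelta}, the inequality $\beta\chi\delta_2 \le \half\alpha$ is equivalent to $\beta\chi \le \half K_{12}\,\beta\Delta$, so by \eqref{alpha_def} it suffices to show
\[
  \log E^X\left[\,e^{\alpha\mE}\,\big|\,\mE\le R(\Delta)\,\right] \ \le\ \half K_{12}\,\beta\Delta.
\]

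Second, I would bound the left side by a first-order expansion of the exponential. Write $R = R(\Delta)$ and $p_R = P^X(\mE\le R)$. For $1\le k\le R$ we have $\alpha k\le K_{12}$, hence $e^{\alpha k}\le 1+\alpha k\,e^{\alpha k}\le 1+e^{K_{12}}\alpha k$; summing against $P^X(\mE=\cdot)$ and dividing by $p_R$ gives
\[
  \beta\chi \ =\ \log\frac{E^X[\,e^{\alpha\mE};\,\mE\le R\,]}{p_R} \ \le\ \log\!\left(1+\frac{e^{K_{12}}\alpha\,\olm(R)}{p_R}\right) \ \le\ \frac{e^{K_{12}}\alpha\,\olm(R)}{p_R}.
\]
Substituting $\alpha = K_{12}/R$ and $\beta\Delta = 1/(R\delta_2)$, the target $\beta\chi\le\half K_{12}\beta\Delta$ follows from $\delta_2\,\olm(R)\le p_R/(2e^{K_{12}})$. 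Since $\mE<\infty$ $P^X$-a.s.\ and $R(\Delta)\to\infty$ as $\beta\Delta\to 0$, we have $p_R\to 1$, so for $\beta\Delta$ small it is enough to establish $\delta_2\,\olm(R(\Delta))\le 1/(4e^{K_{12}})$.

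The crux is therefore the bound $\delta_2\,\olm(R(\Delta)) = \ep_2\,\delta^*(\Delta)\,\olm(R(\Delta))\to 0$ as $\ep_2\to 0$, uniformly in small $\beta\Delta$. I would get this from the identity $1/\delta^*(\Delta) = E_{\nu_{\alpha^*}}(\mE)$ with $\alpha^* = \beta f_a(\beta,u)$, recalled in the introduction. Retaining in the numerator of $E_{\nu_{\alpha^*}}(\mE)$ only the terms with $\mE\le 1/\alpha^*$, where $e^{-\alpha^*\mE}\ge e^{-1}$, and bounding the denominator $E^X[e^{-\alpha^*\mE}]$ by $1$, gives $1/\delta^*(\Delta)\ge e^{-1}\olm(\lfloor 1/\alpha^*\rfloor)$, i.e.\ $\delta^*(\Delta)\,\olm(\lfloor 1/\alpha^*\rfloor)\le e$. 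From $\beta\Delta\delta^*/\beta f_a\to(c-1)\wedge 1$ together with \eqref{M_def} we obtain $\alpha^* M(\Delta)\to 1/((c-1)\wedge 1)$, so $1/\alpha^*\asymp M(\Delta)$, while $R(\Delta)=M(\Delta)/\ep_2$ by \eqref{RvsM}. Since $\olm$ is regularly varying of index $(2-c)\vee 0 < 1$, for $\beta\Delta$ small (depending on $\ep_2$) there is a constant $K$ depending only on $c$ with $\olm(R(\Delta))\le K\,\ep_2^{-((2-c)\vee 0)}\,\olm(\lfloor 1/\alpha^*\rfloor)$, whence
\[
  \delta_2\,\olm(R(\Delta)) \ \le\ eK\,\ep_2^{\,1-((2-c)\vee 0)} \ =\ eK\,\ep_2^{\,(c-1)\wedge 1}.
\]
This is below $1/(4e^{K_{12}})$ as soon as $\ep_2$ is chosen small in terms of $K_{12}$, and combined with $p_R\to 1$ this proves \eqref{alpha_relation2} for all sufficiently small $\beta\Delta$ (the threshold depending on $\ep_2$).

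The first two steps are elementary; I expect the third to hold the only real difficulty. It is there that the hypothesis $c>1$ enters — through the strictly positive exponent $(c-1)\wedge 1$ on $\ep_2$, equivalently through $(2-c)\vee 0 < 1$ — and one must keep track of the three comparable scales $M(\Delta)$, $R(\Delta)=M(\Delta)/\ep_2$ and $1/\alpha^*$ and invoke Karamata/Potter-type estimates for the slowly varying $\varphi$ to make the comparison $\olm(R(\Delta))\le K\ep_2^{-((2-c)\vee 0)}\olm(\lfloor 1/\alpha^*\rfloor)$ uniform over small $\beta\Delta$.
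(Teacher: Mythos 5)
Your proposal is correct, and while its opening move coincides with the paper's, the heart of the argument is genuinely different. Like the paper, you first use the linear bound $e^{\alpha k}\le 1+C(K_{12})\alpha k$ for $k\le R$ to reduce \eqref{alpha_relation2} to showing that $\delta_2\,\olm(R(\Delta))$ is small (this is exactly the paper's reduction to \eqref{mbardelta2}, with harmless differences in constants and in how $P^X(\mE\le R)\to 1$ is absorbed). Where you diverge is in proving that bound: the paper argues by cases, using the trivial bound $\olm(R)\le E^X(\mE)$ when the mean is finite, the explicit asymptotics $1/(\beta\Delta)\sim K\,M^{c-1}/\varphi(M)$ from \cite{Al08} for $3/2\le c<2$, and the conjugate slowly varying estimate \eqref{delta*c2} (developed in the proof of Lemma \ref{concavity2}) for $c=2$ with infinite mean. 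You instead give a unified, case-free argument: the exact identity $1/\delta^*=E_{\nu_{\alpha^*}}(\mE)$ with $\alpha^*=\beta f_a$ (recalled in the introduction and used in the paper's proof of Lemma \ref{concavity2} via $\alpha_0$), truncation at $\mE\le 1/\alpha^*$ to get $\delta^*\,\olm(\lfloor 1/\alpha^*\rfloor)\le e$, the recalled limit $\beta\Delta\delta^*/\beta f_a\to(c-1)\wedge 1$ to identify $1/\alpha^*\asymp M(\Delta)$, and regular variation of $\olm$ (index $(2-c)\vee 0$, via the uniform convergence/Potter bounds you cite) to compare $\olm(R)$ with $\olm(\lfloor 1/\alpha^*\rfloor)$ at the comparable scales $R=M/\ep_2$ and $1/\alpha^*\asymp M$, yielding $\delta_2\,\olm(R)\le eK\ep_2^{(c-1)\wedge 1}$. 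Your quantifier structure ($\ep_2$ small depending on $K_{12}$, then $\beta\Delta$ small depending on $\ep_2$) matches the lemma. What each approach buys: yours handles all $c>1$ at once, avoids the conjugate-function machinery entirely, and makes the positive power of $\ep_2$ transparent; its only extra dependence is on the limit $\beta\Delta\delta^*/\beta f_a\to(c-1)\wedge 1$, whose extension to $c\ge 2$ is only asserted (from \cite{Al08}) in Section 2, whereas the paper's case analysis reuses estimates it has already proved in full (notably \eqref{delta*c2}) and so is more self-contained for the $c=2$, infinite-mean case.
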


\begin{proof}
We write $\alpha$ for $\alpha(\beta\chi,R)$, $\delta^*$ for $\delta^*(\Delta)$, $M$ for $M(\Delta)$ and $R$ for $R(\Delta)$.  We have 
\[
  e^{\alpha k} \leq 1 + \frac{e^{K_{12}}-1}{K_{12}}\alpha k \quad \text{for all } k \leq R,
  \]
so 
\[
  e^{\beta\chi} = E^X\left( e^{\alpha \mE} \mid \mE \leq R \right) \leq
    1 +  \frac{e^{K_{12}}-1}{K_{12}}\alpha E^X(\mE \mid \mE \leq R),
    \]
and therefore, for large $R$,
\begin{equation} \label{bcupper}
  \beta\chi\delta_2 \leq  2\frac{e^{K_{12}}-1}{K_{12}}\alpha \olm(R)\delta_2.
  \end{equation}
Hence we need to show that
\begin{equation} \label{mbardelta2}
  \olm(R)\delta_2 = \frac{\olm(R)}{R}\ \frac{1}{\beta\Delta} \leq \frac{K_{12}}{4(e^{K_{12}} - 1)}.
  \end{equation}

{\it Case 1.} When $E^X(\mE) < \infty$, \eqref{mbardelta2} is true whenever $\ep_2$ is small, since $\olm(R) \leq E^X(\mE)$.  

{\it Case 2.} Suppose $3/2 \leq c<2$.  Then as $\beta\Delta \to 0$, for some $K_{13},K_{14}$ we have  
\[
  \frac{\olm(R)}{R} \sim K_{13} \left(\frac{\ep_2}{M} \right)^{c-1} \varphi\left( \frac{M}{\ep_2} \right) \quad \text{and}
    \quad \frac{1}{\beta\Delta} \sim K_{14} \frac{M^{c-1}}{\varphi(M)},
  \]
the first being uniform in $\ep_2<1$ and the second being proved in \cite{Al08}.  Hence for $\beta\Delta$ small (so that $M$ is large) and $\ep_2<1$ we have 
\[
  \olm(R)\delta_2 \leq K_{15} \ep_2^{c-1} \varphi\left( \frac{M}{\ep_2} \right) \varphi(M)^{-1}
    \leq K_{15}\ep_2^{(c-1)/2},
  \]
and \eqref{mbardelta2} follows for small $\ep_2$.

{\it Case 3.} Suppose $c=2$ and $E^X(\mE) = \infty$.  For $\beta\Delta$ small and $\ep_2 < 1$ we obtain using \eqref{delta*c2} that
\begin{align} \label{c2case}
  \olm(R)\delta_2 = \ep_2 \olm\left( \frac{1}{\ep_2\beta\Delta\delta^*} \right) \delta^*
    \leq 2\ep_2^{1/2} \frac{  \olm\left( \frac{1}{\beta\Delta} \left( \frac{1}{\olm} \right)^*
    \left( \frac{1}{\beta\Delta} \right)\right) }
    { \left( \frac{1}{\olm} \right)^*\left( \frac{1}{\beta\Delta} \right) }
    \leq 4\ep_2^{1/2},
  \end{align}
and \eqref{mbardelta2} follows for small $\ep_2$.  Here we use the fact that the rightmost ratio in \eqref{c2case} converges to 1 as $\beta\Delta \to 0$ by the definition of the conjugate.
\end{proof}

The next lemma shows that cost per length $R(\Delta)$, in occupied segments, of having sparse returns can be made arbitrarily large by taking $\ep_2$ small.  This cost appears as the constant $K_{16}$.

\begin{lemma}\label{LDP_Lower}
There exists a constant $C = C(R(\Delta))$ as follows.
For every $K_{16}>0$, provided $\ep_2$ is small enough (depending on $K_{16}$ and $\ep_4$), for all lifted semi-CG skeletons $\hmJ^s$, for $K_{10}$ from Lemma \ref{ratio_bound},
\begin{eqnarray*}
  P^X\left( \mW_-^s(\hmJ^s,\delta_2) \mid \mW^s(\hmJ^s) \right)
    \leq C(4K_{10})^{m(\hmJ^s)+1} e^{-K_{16}\,|\,\hmJ\,|/R(\Delta)}.
\end{eqnarray*} 
\end{lemma}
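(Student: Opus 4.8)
The plan is to pass to $P^X\!\left(\mW_-^s(\hmJ^s,\delta_2)\mid\mW^s(\hmJ^s)\right)=P^X(\mW_-^s(\hmJ^s,\delta_2))/P^X(\mW^s(\hmJ^s))$ (legitimate, since $\hmS^s(\bx)=\hmJ^s$ forces $\hmS(\bx)=\mS(\bx)$ with the same $m+1$ occupied segments as $\hmJ^s$, so $\mW_-^s(\hmJ^s,\delta_2)\subseteq\mW^s(\hmJ^s)$), to parametrize both events by the true occupied‑segment lengths, to reduce the global sparse‑return constraint to a product of single‑segment factors by an exponential Chebyshev bound, to rewrite each factor via Lemma \ref{LDP}, and finally to sum over the semi‑CG mesh so as to invoke Lemma \ref{ratio_bound} and Lemma \ref{alpha_relation}. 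Concretely, a path with $\mS^s(\bx)=\hmJ^s$ has its long excursions ending at the prescribed points $b_i$ and its $i$th occupied segment of some length $n_i\in I_{k_i}$ (where $n_{k_i}^+$ is the length of the $i$th occupied segment of $\hmJ^s$); its $R(\Delta)$‑skeleton is then the skeleton $\mJ_{(n_i)}$ determined by $(n_i)_{i\le m+1}$, and on $\mW_-^s(\hmJ^s,\delta_2)$ one has in addition $L_{\mJ_{(n_i)}}(\bx)\le\delta_2|\mJ_{(n_i)}|\le\delta_2|\hmJ^s|$. By the renewal structure, $P^X(\mS(\bx)=\mJ_{(n_i)})$ and $E^X[e^{-\beta\chi L_{\mJ_{(n_i)}}};\mS(\bx)=\mJ_{(n_i)}]$ factor over $i$ as $\prod_iP^X(\mY_{n_i,R(\Delta)}^0)\prod_ip_{b_i-a_i}$ and $\prod_iE^X[e^{-\beta\chi L_{n_i}};\mY_{n_i,R(\Delta)}^0]\prod_ip_{b_i-a_i}$, each $p_{b_i-a_i}$ depending only on the single index $n_i$. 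Applying $1_{\{L\le\delta_2|\hmJ^s|\}}\le e^{\beta\chi(\delta_2|\hmJ^s|-L)}$ for a parameter $\chi>0$ and summing over $(n_i)$, the ratio is bounded by $e^{\beta\chi\delta_2|\hmJ^s|}$ times $\prod_{i=1}^{m+1}$ of a ratio of sums over $n_i\in I_{k_i}$ of the two corresponding segment weights.

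Next I would insert Lemma \ref{LDP} with $\alpha=K_{12}/R(\Delta)$ and $\chi$ given by \eqref{alpha_def}: this turns $E^X[e^{-\beta\chi L_n};\mY_{n,R(\Delta)}^0]$ into $e^{-\alpha n}\,\nu_{\alpha,R(\Delta)}(x_n=0)\,P^X(\mY_{n,R(\Delta)})$, the factor $P^X(x_n=0\mid\mY_{n,R(\Delta)})$ cancelling its copy inside $P^X(\mY_{n,R(\Delta)}^0)$ in the denominator. Summing over $n\in I_{k_i}$ then converts the numerator into $E_{\nu_{\alpha,R(\Delta)}}(L_{I_{k_i}})$ and the denominator into a bounded multiple of $E^X(L_{I_{k_i}}\mid\mY_{n_{k_i}^+,R(\Delta)})$, once one checks the two uniformity facts the $\ep_4$‑fine semi‑CG mesh is designed to supply: (a) the residual weight $P^X(\mY_{n,R(\Delta)})\,p_{b_i-a_i}$ varies over $n\in I_{k_i}$ by only an $O(1)$ factor (using that $I_{k_i}$ has multiplicative, or additive‑$\ep_4R(\Delta)$, width, that $b_i-a_i\ge R(\Delta)$, and slow variation of $\varphi$), and (b) replacing the conditioning $\mY_{n,R(\Delta)}$ by $\mY_{n_{k_i}^+,R(\Delta)}$ costs only an $O(1)$ factor (the re‑truncated excursions span a window of length $\le\ep_4R(\Delta)$, any of which exceeds $R(\Delta)$ with total probability tending to $0$ with $\ep_4$). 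Lemma \ref{ratio_bound} then bounds each per‑segment ratio by $K_{10}$; collecting the $O(1)$ constants produces the factor $(4K_{10})^{m(\hmJ^s)+1}$, while the extracted $\prod_ie^{-\alpha n_{k_i}^-}\le e^{-\alpha(1-\ep_4)|\hmJ^s|}$ (up to a bounded, $R(\Delta)$‑dependent correction absorbed into $C$, arising from $|\hmJ^s|$ versus $\sum_in_{k_i}^+$ and from the fixed initial and final segments) combines with $e^{\beta\chi\delta_2|\hmJ^s|}$.

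To finish, Lemma \ref{alpha_relation} gives $\beta\chi\delta_2\le\tfrac12\alpha$, so the surviving exponential is at most $e^{-(\tfrac12-\ep_4)\alpha|\hmJ^s|}=e^{-(\tfrac12-\ep_4)K_{12}|\hmJ^s|/R(\Delta)}$; given $K_{16}$ I choose $K_{12}=4K_{16}$ (with $\ep_4$ already fixed small) and then $\ep_2$ small enough — depending on $K_{12}$, hence on $K_{16}$ and $\ep_4$ — for Lemma \ref{alpha_relation} to apply, and the stated bound $C(4K_{10})^{m(\hmJ^s)+1}e^{-K_{16}|\hmJ^s|/R(\Delta)}$ follows.

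I expect the main obstacle to be items (a)–(b): showing, uniformly in $\hmJ^s$, that summing the $\nu_{\alpha,R}$‑ and $P^X$‑return probabilities over each $I_{k_i}$ against the $n$‑dependent weights and against a sliding conditioning event loses only an absolute constant. This is exactly why Lemma \ref{ratio_bound} is phrased in terms of expected local times over the intervals $I_i$ rather than pointwise return probabilities, and why the sparse‑return regime uses the semi‑CG scheme of Definition \ref{semiCGdefs} instead of the CG scheme of Definition \ref{CGdefs}.
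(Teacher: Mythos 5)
Your proposal is correct and follows essentially the same route as the paper's proof: parametrize by the exact occupied-segment lengths within the semi-CG classes, apply the exponential Chebyshev tilt $e^{\beta\chi(\delta_2 S-L)}$ segment by segment, rewrite each factor via Lemma \ref{LDP}, absorb $\beta\chi\delta_2$ into $\alpha/2$ via Lemma \ref{alpha_relation}, and sum over each $I_{k_i}$ so that your uniformity facts (a)--(b) --- which the paper establishes as the factor-of-two bounds \eqref{noweight} and \eqref{Ycond2} --- convert the per-segment ratios into the local-time ratios controlled by Lemma \ref{ratio_bound}. The only cosmetic differences are that the paper phrases the decomposition as the conditional law of the segment lengths given $\mW^s(\hmJ^s)$ rather than as a ratio of unconditional probabilities (your containment justification is slightly imprecise, since lifting can delete short central occupied segments, but this does not affect the estimate), and that the constant $C(R(\Delta))$ arises specifically from the deterministic final segment, whose factor $\nu_{\alpha,R}(x_{t_{m+1}}=0)/P^X(x_{t_{m+1}}=0\mid\mY_{t_{m+1},R})$ is bounded by $C(R)$ pointwise rather than by Lemma \ref{ratio_bound}.
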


\begin{proof}
We write $R,M$ for $R(\Delta),M(\Delta)$.  Let $\alpha = K_{16}/(1-\ep_4)R(\Delta)$.

Fix $\hmJ^s = [0,N] \bs \cup_{i=1}^m (a_i,b_i)$for which $P^X(\mW^s( \hmJ^s )) > 0$, and for paths in $\mW^s(\hmJ^s)$ let $[b_{i-1},b_{i-1} + T_i]$ denote the $i$th occupied segment, for $1 \leq i \leq m+1$.  Thus, given $\mW^s(\hmJ^s)$, $b_{i-1}$ is deterministic and $T_i$ is random, but lying in $I_i' = I_{k_i} \cap \ZZ$ for some particular $k_i = k_i(\hmJ^s)$, for $i \leq m$, and $T_{m+1} = N - b_m$.  For notational convenience we define $t_{m+1} = N - b_m$ and the one-point interval $I_{m+1}' = \{t_{m+1}\}$.  

Suppose $t_i \in I_i'$ for all $i \leq m+1$.  If there is at least one long occupied segment in $\hmJ^s$ then it follows analogously to Lemma \ref{J,J^*} that $\sum_{i=1}^{m+1} t_i \geq (1-\ep_4)||\hmJ^s|| -  M/4 \geq |\hmJ^s|/2$. If there is no long occupied segment, then $m=1$ and $t_1+t_2 < 2 M$, and we have $\alpha |\hmJ^s| < 3\alpha  M \leq 6\ep_2  K_{16} \leq 6K_{16}$.
Thus is all cases we have
\begin{equation} \label{tisum}
  \exp\left( -\frac{\alpha}{2} \sum_{i=1}^{m+1} t_i \right) \leq e^{2K_{16}} e^{-\alpha |\hmJ^s|/4}.
\end{equation}

The $T_i$'s are conditionally independent given $\mW^s(\hmJ^s)$; in fact
\begin{align} \label{hitpoint}
  P^X&\left( T_i = t_i \text{ for all } i\leq m\ \big|\ \mW^s(\hmJ^s) \right) \\
  &= \prod_{i=1}^{m} P^X\left( T_i = t_i\ \big|\ \mW^s(\hmJ^s) \right) \notag \\
  &= \prod_{i=1}^{m} \frac{ P^X\left( x_{t_i} = 0, \mY_{t_i,R} \right) 
    P^X\left( \mE = b_i - b_{i-1} - t_i\right) }{ \sum_{t \in I_i'} 
    P^X\left( x_t = 0, \mY_{t,R} \right) 
    P^X\left( \mE = b_i - b_{i-1} - t \right) }. \notag 
    \end{align}
For sites $t \in I_i'$ we have $b_i - b_{i-1} - t \geq (1-\ep_4)R$, while $||I_i'|| \leq \ep_4 R$.
Therefore provided $\ep_4$ is small,
\begin{align}
  \max_{s,t \in I_i'} \frac{ P^X\left( \mE = b_i - b_{i-1} - s \right) }{ P^X\left( \mE = b_i - b_{i-1} - t \right) }
    &\leq 2,
\end{align}
which with \eqref{hitpoint} shows that
\begin{align} \label{noweight}
  P^X&\left( T_i = t_i \text{ for all } i\leq m\ \big|\ \mW^s(\hmJ^s) \right)
    \leq  \prod_{i=1}^{m} \frac{ 2P^X\left( x_{t_i} = 0,  \mY_{t_i,R} \right) }{ \sum_{t \in I_i'} 
    P^X\left( x_t = 0,  \mY_{t,R} \right) }. 
  \end{align}
Let $\alpha = K_{16}/(1-\ep_4)R(\Delta)$ and let $\chi$ be given by \eqref{alpha_def}.
We obtain using \eqref{tisum}, \eqref{noweight} and Lemmas \ref{LDP} and \ref{alpha_relation} that
\begin{align} \label{Wprob}
  P^X&\left( \mW_-^s(\hmJ^s,\delta_2) \mid \mW^s(\hmJ^s) \right) \notag \\
  &= \sum_{t_1 \in I_1'} \cdots \sum_{t_m \in I_m'}
    P^X\left( \frac{ L_{\mS^s(\bx)} }{ |\mS(\bx)| } \leq \delta_2\ \bigg|\ \mW^s(\hmJ^s), 
    T_i = t_i \text{ for all } i \leq m \right) \notag \\
  &\qquad \qquad \qquad \qquad \qquad \qquad \cdot P^X\left( T_i = t_i \text{ for all } i \leq m\ \big|\  
    \mW^s(\hmJ^s) \right) \notag \\
  &\leq \sum_{t_1 \in I_1'} \cdots \sum_{t_m \in I_m'}
    \prod_{i=1}^{m+1} e^{\beta\chi\delta_2 t_i}
    E^X\left( e^{-\beta\chi L_{t_i}}\ \big|\ x_{t_i} = 0, \mY_{t_i,R} \right) \notag \\
  &\qquad \qquad \qquad \qquad \qquad \qquad \cdot 
    \frac{ 2P^X\left( x_{t_i} = 0,  \mY_{t_i,R} \right) }{ \sum_{t \in I_i'} 
    P^X\left( x_t = 0,  \mY_{t,R} \right) } \notag \\
  &= \sum_{t_1 \in I_1'} \cdots \sum_{t_m \in I_m'}
    \prod_{i=1}^{m+1} e^{-\alpha t_i/2} 
    \frac{ \nu_{\alpha,R}(x_{t_i} = 0) }{ P^X(x_{t_i} = 0 \mid \mY_{t_i,R}) } \\
  &\qquad \qquad \qquad \qquad \qquad \qquad \qquad \cdot
     \frac{ 2P^X\left( x_{t_i} = 0, \mY_{t_i,R} \right) }{ \sum_{t \in I_i'} 
    P^X\left( x_t = 0, \mY_{t,R} \right) } \notag \\
  &\leq e^{2K_{16}} e^{-\alpha |\hmJ^s|/4} \prod_{i=1}^{m+1}  
    \frac{  2\sum_{t \in I_i'}  \nu_{\alpha,R}(x_{t} = 0) P^X( \mY_{t,R}) }
    { \sum_{t \in I_i'} P^X(x_t = 0 \mid  \mY_{t,R}) P^X( \mY_{t,R}) }. \notag 
  \end{align}
Now the event $\mY_{t,R}$ is nonincreasing in $t$, so 
\begin{equation} \label{Ycond}
  \max_{s,t \in I_i'} \frac{ P^X( \mY_{s,R}) }{ P^X( \mY_{t,R}) } \leq \frac{ P^X( \mY_{n_{k_i}^-,R}) }{ P^X( \mY_{n_{k_i}^+,R}) }
    = \frac{1}{ P^X( \mY_{n_{k_i}^+,R} \mid \mY_{n_{k_i}^-,R}) }
    \leq \frac{1}{ P^X( \mY_{n_{k_i}^+ - n_{k_i}^-,R}) }.
  \end{equation}
Since $n_{k_i}^+ - n_{k_i}^- \leq \ep_4 R$, it is easily shown that provided $\ep_4$ is sufficiently small (and also $\ep_2 \leq \ep_4$, to cover the case of $k_i=l_0$ which occurs if the initial segment is short), the denominator on the right side of \eqref{Ycond} is at least 1/2, and therefore
\begin{equation} \label{Ycond2}
  \max_{s,t \in I_i'} \frac{ P^X( \mY_{s,R}) }{ P^X( \mY_{t,R}) } \leq 2.
  \end{equation}
Further, for fixed $t$, $P^X(x_t = 0 \mid  \mY_{j,R})$ takes the same value for all $j \geq t$.  With \eqref{Wprob}, \eqref{Ycond2} and Lemma \ref{ratio_bound} this shows that
\begin{align} \label{conseq}
  P^X&\left( \mW_-^s(\hmJ^s,\delta_2) \mid \mW^s(\hmJ^s) \right) \notag \\
  &\leq  e^{2K_{16}} e^{-\alpha |\hmJ^s|/4} \prod_{i=1}^{m+1}  
    \frac{  4\sum_{t \in I_i'}  \nu_{\alpha,R}(x_{t} = 0) }
    { \sum_{t \in I_i'} P^X(x_t = 0 \mid  \mY_{t,R}) }  \\
  &=  4^{m+1} e^{2K_{16}} e^{-\alpha |\hmJ^s|/4} \frac{  \nu_{\alpha,R}(x_{t_{m+1}} = 0) }
    { P^X(x_{t_{m+1}} = 0 \mid  \mY_{t_{m+1},R}) } 
    \prod_{i=1}^m  \frac{ E_{\nu_{\alpha,R}}(\, L_{I_i'}\,)}{E^X(\, L_{I_i'}\,|\, \mY_{n^+,R}\,)} \notag \\
  &\leq 4^{m+1} K_{10}^m e^{-\alpha |\hmJ^s|/4}  \frac{ e^{2K_{16}} }
    { P^X(x_{t_{m+1}} = 0 \mid  \mY_{t_{m+1},R}) }. \notag
  \end{align}
For large $R$, since $\lim_{t \to \infty} P^X(x_t = 0 \mid  \mY_{t,R}) = E^X( \mE \mid \mE \leq R)^{-1} > 0$, there exists $C(R)>0$ such that $P^X(x_t = 0 \mid  \mY_{t,R}) \geq C(R)^{-1}$ for all $t$ for which $P^X(x_t = 0 \mid  \mY_{t,R})>0$.  Since $P^X(\mW^s( \hmJ^s )) > 0$, $t_{m+1}$ must be such a value of $t$, so from \eqref{conseq},
\[
  P^X\left( \mW_-^s(\hmJ^s,\delta_2) \mid \mW^s(\hmJ^s) \right) \leq C(R)(4K_{10})^{m+1} e^{-K_{16} |\hmJ^s|/R}.
  \]
\end{proof}

\begin{proposition} \label{ZNbound}
Provided $\ep_2$ is sufficiently small, and $\beta\Delta$ is sufficiently small (depending on $\ep_2$),
\begin{eqnarray} \label{Tbound}
  E^V[\,Z_N^0(\mT(\delta_2))\,] \quad \text{is bounded in } N.
\end{eqnarray}
\end{proposition}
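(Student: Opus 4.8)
Here is the plan.

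First I would assemble the three estimates already prepared in this section. Inserting \eqref{Prop3.6use}, the annealed bound \eqref{annealed1}, and Lemma \ref{LDP_Lower} (with its parameter $K_{16}$ kept free for the moment) into the decomposition \eqref{ZNT}, and using $\beta\Delta\delta_2 = R(\Delta)^{-1}$ from \eqref{Rdelta} so that the factor $e^{\beta\Delta\delta_2|\hmJ^s|}$ coming from \eqref{annealed1} combines with the factor $e^{-K_{16}|\hmJ^s|/R(\Delta)}$ coming from Lemma \ref{LDP_Lower}, I arrive at
\[
  E^V[\,Z_N^0(\mT(\delta_2))\,] \le C\,\sum_{\hmJ^s} (8K_{10})^{m(\hmJ^s)}\,
    e^{-(K_{16}-1)|\hmJ^s|/R(\Delta)}\,P^X(\mW^s(\hmJ^s)),
\]
the sum being over all lifted semi-CG skeletons in $[0,N]$ and $C$ allowed to depend on $R(\Delta)$. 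Since $K_{10}$ depends only on $\ep_4$ (Lemma \ref{ratio_bound}) while a large choice of $K_{16}$ forces $\ep_2$ small via Lemma \ref{LDP_Lower}, and we only need $\ep_2$ and then $\beta\Delta$ small, this ordering of the choices creates no circularity.

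Next I would estimate the sum using the renewal (product) structure of $P^X(\mW^s(\hmJ^s))$ exhibited in the proof of Lemma \ref{LDP_Lower}: after conditioning on the occupied-segment lengths, $P^X(\mW^s(\hmJ^s))$ factors over the $m(\hmJ^s)$ ``blocks'' consisting of an occupied segment together with the long excursion following it, times a final-segment factor. Relaxing the requirement that the lengths add up to $N$ — which only inflates the bound, all terms being positive — the contribution of skeletons with $m$ long excursions is at most $(8K_{10})^m$ times the $m$th power of the per-block weight
\[
  P^X\big(\mE > (1-\ep_4)R(\Delta)\big)\,\cdot\,
    \sum_{t\ge 0} P^X\big(x_t = 0,\ \mY_{t,R(\Delta)}\big)\,e^{-(K_{16}-1)t/R(\Delta)},
\]
times a final-segment factor that is a finite constant (depending on $R(\Delta)$) precisely because of the same exponential weight. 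Summing the geometric series in $m$, it then suffices to show that $8K_{10}$ times the per-block weight is strictly less than $1$, uniformly in $N$.

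The crux is therefore the weighted sum $\sum_t P^X(x_t=0,\mY_{t,R})e^{-(K_{16}-1)t/R}$. Without the weight it equals $E^X(\#\{\text{returns to }0\text{ before the first excursion exceeding }R\})$, which is of order $1/P^X(\mE>R)$; hence the unweighted per-block weight is only $\Theta(1)$ and may exceed $1/(8K_{10})$. The gain is that $P^X(\mY_{t,R})$ already decays exponentially in $t$, at rate $\asymp \kappa_R/R$ with $\kappa_R \asymp R\,P^X(\mE>R)/E^X(\mE\mid\mE\le R)$, because conditionally the truncated excursion lengths preceding the first long one are i.i.d.\ and their number is geometric with success probability $P^X(\mE>R)$. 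By the asymptotics of $P^X(\mE>R)$ and $E^X(\mE\mid\mE\le R)$ recalled in Lemma \ref{concavity2}, $\kappa_R$ stays bounded — indeed $\kappa_R\to 0$ except when $3/2\le c<2$, where $\kappa_R\to(2-c)/(c-1)\le 1$ — with the three regimes $E^X(\mE)<\infty$, $3/2\le c<2$, and $c=2$ with $E^X(\mE)=\infty$ handled exactly as in that lemma. Inserting the weight $e^{-(K_{16}-1)t/R}$, whose rate exceeds $\kappa_R/R$ once $K_{16}$ is large, cuts the sum off at $t\asymp R/K_{16}$ and gains a factor of order $\kappa_R/(K_{16}-1)$; so the per-block weight is at most $C_1\kappa_R/(K_{16}-1)$ with $C_1$ depending only on $c,\varphi$. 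Choosing $K_{16}$ larger than $1+8K_{10}C_1\sup_R\kappa_R$ (a quantity depending only on $c,\varphi,\ep_4$), then $\ep_2$ small, then $\beta\Delta$ small, makes $8K_{10}$ times the per-block weight less than $1$, the geometric series converges, and the bound on $E^V[Z_N^0(\mT(\delta_2))]$ is independent of $N$. The hardest point is this last weighted-sum estimate: tracking the decay rate of $P^X(\mY_{t,R})$ uniformly across the parameter regimes and pinning down the chain of constants $K_{10},C_1,\kappa_R$ before $\ep_2$ and $\beta\Delta$ are sent to $0$.
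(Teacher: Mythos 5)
Your overall architecture is the paper's: you start from the same chain \eqref{ZNT}, \eqref{Prop3.6use}, \eqref{annealed1} plus Lemma \ref{LDP_Lower}, use $\beta\Delta\delta_2=1/R(\Delta)$ to turn the annealed gain into a net exponential cost per unit of skeleton length, relax the constraint that segment and excursion lengths sum to $N$, and close with a geometric series in the number $m$ of long excursions, with the same non-circular ordering of choices ($\ep_4$, then $K_{16}$, then $\ep_2$, then $\beta\Delta$). Where you genuinely differ is the per-block estimate: instead of the paper's route through the expected local time $\zeta(s)=K_{17}s^{c-1}/\varphi(s)$ and the max-versus-average step (\eqref{zetabound}--\eqref{sum3}), you exploit the exact renewal factorization of $P^X(\mW^s(\hmJ^s))$ over occupied-segment/long-excursion blocks and bound the weighted return sum directly. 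Made rigorous, your "exponential decay of $P^X(\mY_{t,R})$" heuristic is the identity $\sum_{t\ge 0}P^X(x_t=0,\mY_{t,R})e^{-\gamma t}=\bigl(1-E^X[e^{-\gamma\mE};\mE\le R]\bigr)^{-1}$ together with $1-E^X[e^{-\gamma\mE};\mE\le R]\ge \tfrac{1}{2}\gamma\,\olm(1/\gamma)$, i.e. exactly the kind of computation the paper performs in Lemma \ref{alpha_relation} and in \eqref{sum3}; your three regimes are then handled by the same $\olm$ asymptotics. This route is legitimate and somewhat cleaner (it avoids the $\ep_4^{-(2-c)}$ loss in \eqref{sum3}).

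One quantitative claim is wrong, though not fatally: the per-block weight is not $O(\kappa_R/(K_{16}-1))$. With $\gamma=(K_{16}-1)/R$ the denominator above is of order $\gamma\,\olm(1/\gamma)\asymp (K_{16}/R)^{c-1}\varphi(R/K_{16})$ when $3/2<c<2$, so the per-block weight is of order $K_{16}^{-(c-1)}$, not $K_{16}^{-1}$; this matches the exponent $(c-1)\wedge 1$ in the paper's \eqref{sum3}. Your cutoff heuristic ("cuts the sum off at $t\asymp R/K_{16}$") overlooks that the density of returns up to that cutoff is $\asymp 1/\olm(R/K_{16})$, not $1/\olm(R)$. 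Since all the argument needs is that the per-block weight tends to $0$ as $K_{16}\to\infty$, uniformly in large $R$ — which the corrected bound still provides, at the price of taking $K_{16}$ of size roughly $(CK_{10})^{1/(c-1)}$ rather than $CK_{10}$ — the slip changes only how large $K_{16}$ must be, not the validity or the dependence structure of the choices.
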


\begin{proof}
Write $R$ for $R(\Delta)$.  From \eqref{ZNT}--\eqref{annealed1}, and from Lemma \ref{LDP_Lower} with $K_{16} \geq 2$ to be specified, for $C(R)$ and $K_{16}$ from that lemma, provided $\beta\Delta$ is sufficiently small we have
\begin{align} \label{ZNT2}
  E^V&[ Z_N^0(\mT(\delta_2)) ] \notag \\
  &\leq \sum_{\hmJ^s} 2^{m(\hmJ^s)} \exp\left( \beta\Delta\delta_2\,|\,\hmJ^s\,| \right) 
    P^X\left( \mW_-^s(\hmJ^s,\delta_2) \mid \mW^s(\hmJ^s) \right)\ P^X( \mW^s(\hmJ^s) ) \notag \\
  &\leq C(R) \sum_{\hmJ^s} \exp\left( \left( \beta\Delta\delta_2 
    - \frac{ K_{16} }{ R } \right) |\,\hmJ^s\,| \right) 
    (4K_{10})^{m(\hmJ^s)+1} \ P^X( \mW^s(\hmJ^s) ) \\
  &\leq C(R) \sum_{\hmJ^s} \exp\left( - \frac{ K_{16} }{ 2R } |\,\hmJ^s\,| \right) 
    (4K_{10})^{m(\hmJ^s)+1} \ P^X( \mW^s(\hmJ^s) ). \notag
\end{align}
We use notation from the proof of Lemma \ref{LDP_Lower}.  In particular,
for a lifted semi-CG skeleton $\hmJ^s = [0,N] \bs \cup_{i=1}^m (a_i,b_i)$for which $P^X(\mW^s( \hmJ^s )) > 0$, and for paths in $\mW^s(\hmJ^s)$ let $[b_{i-1},b_{i-1} + T_i]$ denote the $i$th occupied segment, for $i \leq m+1$.  $T_i$ is then required to lie in $I_i' = I_{k_i} \cap \ZZ$ for some particular $k_i = k_i(\hmJ^s)$, for $i \leq m$.  Analogously to \eqref{mean_est}, provided $\beta\Delta$ is small (so $|I_i'|$ is large) we have
\begin{equation} \label{zetabound}
  E^X [L_{I_i'} \mid x_{b_{i-1}}=0, \mY_{[b_{i-1},a_i^s),R} ] \leq \zeta(|I_i'|),
  \end{equation}
where
\[
  \zeta(s) = \frac{K_{17}s^{c-1}}{\varphi(s)}.
  \]
For $i \geq 2$ and $t_i \in I_i'$ we have 
\begin{equation} \label{zetabound2}
  \frac{\zeta(|I_i'|)}{|I_i'|} \leq \frac{2\zeta(\ep_4 (t_i \wedge R))}{\ep_4 (t_i \wedge R)},
  \end{equation}
so bounding the maximum by twice the average we obtain
\begin{align} \label{probbound}
  P^X( \mW^s(\hmJ^s) ) &\leq \prod_{i=1}^m \sum_{t_i \in I'_i} 
    P^X(\,x_{b_{i-1}+t_i}=0\mid x_{b_{i-1}}=0,\mY_{[b_{i-1},a_i^s),R})\, p_{b_i-b_{i-1}-t_i} \\
  &\leq \prod_{i=1}^m E^X [L_{I_i'} \mid x_{b_{i-1}}=0, \mY_{[b_{i-1},a_i^s),R} ] 
    \left( \max_{t_i\in I'_i} p_{b_i-b_{i-1}-t_i} \right) \notag \\
  &\leq \left( \sum_{t_1 \in I_1'} p_{b_1-b_0-t_1} \right)
    \prod_{i=2}^m \left( \frac{2\zeta(|I_i'|)}{|I_i'|} \sum_{t_i\in I'_i} p_{b_i-b_{i-1}-t_i} \right) \notag \\
  &\leq \left( \sum_{t_1 \in I_1'} p_{b_1-b_0-t_1} \right)
    \prod_{i=2}^m \left( 4 \sum_{t_i\in I'_i} \frac{\zeta(\ep_4 (t_i \wedge R))}{\ep_4 (t_i \wedge R)} p_{b_i-b_{i-1}-t_i} \right). \notag
\end{align}
Note that for $i=1$, when \eqref{zetabound2} need not be valid, we have used the bound $|I_i'|$ in place of \eqref{zetabound}.  Using \eqref{probbound} we then bound the sum in \eqref{ZNT2} by
\begin{align} \label{tiform}
&4K_{10} \sum_{m=0}^\infty \sum_{(b_i)_{i\leq m}} \sum_{(t_i)_{i\leq m+1}} 
  (16K_{10})^m \left( \prod_{i=1}^{m+1} 
  e^{-K_{16}t_i/2R} \right) \left( \prod_{i=2}^m \frac{\zeta(\ep_4 (t_i \wedge R))}{\ep_4 (t_i \wedge R)} \right)
  \left( \prod_{i=1}^m p_{b_i - b_{i-1} - t_i} \right).  
  \end{align}
Here $0=b_0 < b_1 < \dots < b_m \leq N$, and for fixed $b_0,\dots,b_m$ the third sum includes those $t_i$ for which $t_{m+1} = N - b_m$, $b_i - b_{i-1} - t_i \geq (1-\ep_4)R$ for all $1 \leq i \leq m$, and $t_i \geq  (1 - \ep_4) M \geq  M/2$ for all $2 \leq i \leq m$.  The second sum includes those $(b_i)_{i \leq m}$ for which such $t_i$ exist. Now provided $\beta\Delta$ is small so that $M,R$ are large (depending on $\ep_4$) we see that \eqref{tiform} is bounded above by
\begin{align} \label{prodsum}
  4K_{10} &+ 4K_{10} \sum_{m=1}^\infty (16K_{10})^m
    \left( \sum_{t \geq 1} e^{-K_{16}t/2R} \right)^2
    \left( \sum_{t \geq M/2} \frac{\zeta(\ep_4 (t \wedge R))}{\ep_4 (t \wedge R)} e^{-K_{16}t/2R} \right)^{m-1}
    \left( \sum_{n \geq (1-\ep_4)R} p_n \right)^m.
  \end{align}
Now
\begin{equation} \label{sum1}
  \sum_{t \geq 1} e^{-K_{16}t/2R} \leq \frac{2R}{K_{16}},
  \end{equation}
and for some $K_{18}$,
\begin{equation} \label{sum2}
  \sum_{n \geq (1-\ep_4)R} p_n \leq \frac{K_{18}\varphi(R)}{R^{c-1}},
  \end{equation}
and for some $K_{19},K_{20}K_{21}$, provided $\beta\Delta$ is small (depending on $\ep_2,\ep_4$),
\begin{align} \label{sum3}
  \sum_{t \geq M/2} \frac{\zeta(\ep_4 (t \wedge R))}{\ep_4 (t \wedge R)} e^{-K_{16}t/2R} 
    &= \sum_{M/2 \leq t \leq R} \frac{\zeta(\ep_4 t)}{\ep_4 t} e^{-K_{16}t/2R}
    + \frac{\zeta(\ep_4 R)}{\ep_4 R} \sum_{t >R} e^{-K_{16}t/2R} \\
  &\leq \frac{K_{19}}{\ep_4} \max_{M/2 \leq t \leq R} \zeta(\ep_4 t) e^{-K_{16}t/2R} +  \frac{2\zeta(\ep_4 R)}{K_{16}\ep_4} \notag \\
  &\leq  \frac{K_{20}}{\ep_4 \varphi(R)} \left( \frac{\ep_4 R}{K_{16}} \right)^{c-1}
    +  \frac{2\zeta(\ep_4 R)}{K_{16}\ep_4} \notag \\
  &\leq \frac{ K_{21}R^{c-1} }{ \ep_4^{2-c} \varphi(R) K_{16}^{(c-1) \wedge 1} }. \notag
  \end{align}
Therefore for some $C'(R)$ and $K_{22}$, \eqref{prodsum} is bounded by
\[
  4K_{10} + C'(R)\sum_{m=1}^\infty \left( \frac{K_{22}K_{10}}{\ep_4^{2-c} K_{16}^{(c-1) \wedge 1}} \right)^m,
  \]
which is finite provided $K_{16}$ is taken sufficiently large (depending on $\ep_4$.)  Taking $\ep_2$ sufficiently small (depending on $K_{16}$) ensures that Lemma \ref{LDP_Lower} can be applied with this $K_{16}$ to obtain \eqref{ZNT2}.
\end{proof}

The reason the coarse-graining scheme in Definition \ref{semiCGdefs} is different from the one in Definition \ref{CGdefs} is that we need to avoid making a choice of $\ep_4$ (specifying the fineness of the coarse-graining scheme) that depends on $\ep_2$ (which, via $\delta_2$, determines sparse vs. dense returns.)  Having $K_{16}$ large when Lemma \ref{LDP_Lower} is applied in the proof of Proposition \ref{ZNbound} requires taking $\ep_2$ small, while on the right side of \eqref{prodsum} we need $K_{16}$ depending on $\ep_4$.  The coarse-graining scheme in Definition \ref{semiCGdefs} avoids any circularity in the choices, allowing us to specify $\ep_4$ and then $\ep_2$ depending on $\ep_4$.

\begin{proposition}\label{a.s.decay2}
Provided $\ep_2$ is sufficiently small, and $\beta\Delta$ is sufficiently small (depending on $\ep_2$), we have
\begin{eqnarray*}
\limsup_{N\to\infty}\frac{1}{N}\log Z_N^0(\,\mT(\delta_2)\,)=0.
\end{eqnarray*}
\end{proposition}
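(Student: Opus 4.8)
The plan is to read off the almost-sure statement from the annealed bound of Proposition~\ref{ZNbound} by a first-moment and Borel--Cantelli argument, exactly as Proposition~\ref{a.s. decay} was deduced from Lemma~\ref{BoundOnZNL}; the reverse inequality $\limsup\geq 0$ is then supplied by a single explicit class of sparse-return paths.

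\emph{Upper bound.} By Proposition~\ref{ZNbound} there is a constant $C$, not depending on $N$, with $E^V[Z_N^0(\mT(\delta_2))]\leq C$ for all large $N$. Since $Z_N^0(\mT(\delta_2))\geq 0$, Markov's inequality gives $P^V(Z_N^0(\mT(\delta_2))>N^2)\leq C/N^2$, which is summable, so by Borel--Cantelli we have $P^V$-a.s.\ that $Z_N^0(\mT(\delta_2))\leq N^2$ for all large $N$, whence
\[
  \limsup_{N\to\infty}\frac1N\log Z_N^0(\mT(\delta_2))\leq \limsup_{N\to\infty}\frac{2\log N}{N}=0\qquad P^V\text{-a.s.}
\]

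\emph{Lower bound.} Fix $\beta\Delta$ small enough that $3\beta\Delta<\ep_2$ and $2M(\Delta)<R(\Delta)$, and put $M'=\lceil M(\Delta)\rceil$, so $M'<R(\Delta)$ and $3/M'<\ep_2\delta^*(\Delta)=\delta_2$. For $N$ with $N-M'\geq R(\Delta)$, let $\Gamma_N$ be the set of paths whose first excursion has length exactly $N-M'$ and second excursion length exactly $M'$; then $x_n=0$ precisely for $n\in\{0,N-M',N\}$. The second excursion, being shorter than $R(\Delta)$, lies inside the final occupied segment $[N-M',N]$, which is never deleted in passing to the lifted skeleton, so $\hmS(\bx)=\{0\}\cup[N-M',N]$ and $D_{\hmS(\bx)}(\bx)=3/(M'+2)<\delta_2$; thus $\Gamma_N\subset\mT(\delta_2)$ and every path in $\Gamma_N$ ends at $0$. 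On $\Gamma_N$ the Hamiltonian is the single value $\beta H_N^u(\bx,\bV)=\beta(3u+V_0+V_{N-M'}+V_N)$, and $P^X(\Gamma_N)=p_{N-M'}\,p_{M'}\geq c(\Delta)\varphi(N)N^{-c}$ for large $N$ by slow variation. Hence
\[
  \frac1N\log Z_N^0(\mT(\delta_2))\ \geq\ \frac1N\Bigl(\beta(3u+V_0+V_{N-M'}+V_N)+\log p_{N-M'}+\log p_{M'}\Bigr),
\]
and since $\max_{n\leq N}|V_n|=o(N)$ $P^V$-a.s.\ (Borel--Cantelli, $P^V(|V_n|>\ep n)$ being summable) and $N^{-1}\log(\varphi(N)N^{-c})\to0$, the right side tends to $0$ a.s. Thus $\liminf_N N^{-1}\log Z_N^0(\mT(\delta_2))\geq 0$, and together with the upper bound the proposition follows.

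Essentially all the work has been done in Proposition~\ref{ZNbound} (through the semi-CG coarse graining of Definition~\ref{semiCGdefs} and Lemmas~\ref{ratio_bound},~\ref{alpha_relation},~\ref{LDP_Lower}); what remains is the standard passage from a bounded first moment to an almost-sure bound on the exponential growth rate, so there is no real obstacle here. The only point needing care is checking that $\Gamma_N$ genuinely consists of sparse-return paths, i.e.\ $3/M(\Delta)<\delta_2$, which is exactly where the smallness of $\beta\Delta$ is used.
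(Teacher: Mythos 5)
Your proposal is correct and takes essentially the same route as the paper: the substantive part is the upper bound, obtained exactly as in the paper from the first-moment bound of Proposition \ref{ZNbound} via Chebyshev/Markov and Borel--Cantelli (mirroring how Proposition \ref{a.s. decay} follows from Lemma \ref{BoundOnZNL}). Your explicit two-excursion sparse-return paths supplying the matching lower bound are a correct verification of a point the paper leaves implicit.
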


\begin{proof}
This follows immediately by Chebyshev's inequality and the previous proposition.
\end{proof}

\begin{proof}[{\bf Proof of Theorem \ref{thm}.}] Let $\rho$ be as in Lemma \ref{TN}.  Then
\begin{eqnarray*}
\rho &\leq& P^V(T_N)\\
&\leq& P^V\left( Z_N^0(\mD(\delta_2))< Z_{N,\lambda}^0 \right)\\
&\leq& P^V\left( e^{\beta f_q(\beta,\Delta)N/2}< Z_{N,\lambda}^0 \right)+ 
P^V\left( Z_N^0(\mD(\delta_2))<e^{\beta f_q(\beta,\Delta)N/2} \right)\\
&\leq& P^V\left( e^{\beta f_q(\beta,\Delta)N/2}< Z_{N,\lambda}^0 \right)+ 
P^V\left( Z_N^0 < 2e^{\beta f_q(\beta,\Delta)N/2} \right)\\
&&\qquad \qquad + P^V\left( Z_N^0(\mT(\delta_2)) > e^{\beta f_q(\beta,\Delta)N/2} \right).
\end{eqnarray*}
If $f_q(\beta,\Delta)>0$ then as $N$ tends to infinity the right hand side of the above inequality tends to zero, by Proposition \ref{a.s. decay}, Proposition \ref{a.s.decay2} and the 
fact that $ \frac{1}{N}\log Z_N^0$ tends to $\beta f_q(\beta,\Delta)$, $P^V$--a.s.  This is a contradiction so $ f_q(\beta,\Delta) = 0$.
\end{proof}

\end{document}